
\documentclass[10pt]{amsart}

\usepackage{color}
\usepackage{graphicx, tikz}
\usepackage{xcolor}
\usepackage{cancel}
\usepackage{amssymb,amsfonts,amsthm,amsmath,calligra, enumerate, stmaryrd}
\usepackage{extarrows}
\usepackage[utf8]{inputenc}
\usepackage{accents}
\usepackage{slashed}
\usepackage{yfonts}
\usepackage{mathrsfs,pifont}
\usepackage{mathtools}

\usepackage[bookmarksnumbered=true, bookmarksopen=true]{hyperref}
\usepackage[margin=1in]{geometry}
\usepackage[all]{xy}

\theoremstyle{definition}

\newtheorem{Theorem}{Theorem}[section]
\newtheorem{Proposition}[Theorem]{Proposition}
\newtheorem{Lemma}[Theorem]{Lemma}

\newtheorem{Definition}[Theorem]{Definition}

\newtheorem{Remark}[Theorem]{Remark}
\newtheorem{Example}[Theorem]{Example}

\newtheorem{Setting}[Theorem]{Setting}

\numberwithin{equation}{section}

\def\ben{\begin{eqnarray*}}
\def\een{\end{eqnarray*}}

\newcommand{\calH}{\mathcal{H}}

\newcommand{\bA}{\mathbb{A}}

\newcommand{\C}{\mathbb{C}}
\newcommand{\GL}{\mathrm{GL}}

\newcommand{\bC}{\mathbb{C}}
\newcommand{\bD}{\mathbb{D}}

\newcommand{\bN}{\mathbb{N}}

\newcommand{\bQ}{\mathbb{Q}}
\newcommand{\bR}{\mathbb{R}}

\newcommand{\bZ}{\mathbb{Z}}

\newcommand{\bv}{\mathbf{v}}

\newcommand{\bd}{\mathbf{d}}

\newcommand{\cA}{\mathcal{A}}

\newcommand{\cC}{\mathcal{C}}

\newcommand{\cH}{\mathcal{H}}

\newcommand{\cM}{\mathcal{M}}
\newcommand{\cN}{\mathcal{N}}

\newcommand{\fC}{\mathfrak{C}}

\newcommand{\sA}{\mathsf{A}}

\newcommand{\sS}{\mathsf{S}}
\newcommand{\sT}{\mathsf{T}}

\newcommand{\sF}{\mathsf{F}}

\newcommand{\sw}{\mathsf{w}}

\newcommand{\add}{\mathrm{add}}
\newcommand{\rem}{\mathrm{rem}}
\newcommand{\aux}{\mathrm{aux}}
\newcommand{\leg}{\mathrm{leg}}

\newcommand{\Attr}{\operatorname{Attr}}
\newcommand{\Aut}{\operatorname{Aut}}

\newcommand{\can}{\operatorname{can}}

\newcommand{\diag}{\operatorname{diag}}

\newcommand{\Fix}{\operatorname{Fix}}

\newcommand{\Hom}{\operatorname{Hom}}
\newcommand{\Ext}{\operatorname{Ext}}

\newcommand{\Lie}{\operatorname{Lie}}

\newcommand{\Res}{\operatorname{Res}}
\newcommand{\rk}{\operatorname{rk}}

\newcommand{\Spec}{\operatorname{Spec}}
\newcommand{\Stab}{\operatorname{Stab}}

\newcommand{\D}{\operatorname{D}}

\newcommand{\IC}{\operatorname{IC}}
\newcommand{\id}{\operatorname{id}}
\newcommand{\Perv}{\operatorname{Perv}}

\newcommand{\In}{\operatorname{in}}
\newcommand{\Out}{\operatorname{out}}



\makeatletter
\newcommand{\ostar}{\mathbin{\mathpalette\make@circled\star}}
\newcommand{\make@circled}[2]{%
  \ooalign{$\m@th#1\smallbigcirc{#1}$\cr\hidewidth$\m@th#1#2$\hidewidth\cr}%
}
\newcommand{\smallbigcirc}[1]{%
  \vcenter{\hbox{\scalebox{0.77778}{$\m@th#1\bigcirc$}}}%
}
\makeatother

\newcommand*\circled[1]{\tikz[baseline=(char.base)]{
    \node[shape=circle,draw] (char) {#1};}}
\newcommand*\fillcircled[1]{\tikz[baseline=(char.base)]{
    \node (char) {#1};
    \filldraw[color=black, fill=white] (char) circle (.35);
    \node at (char) {#1};}}


\topmargin=-0.5in
\oddsidemargin=-0.25in
\evensidemargin=-0.25in
\textwidth=7in
\textheight=9.5in


\title{Symmetric quiver varieties and critical stable envelopes}

\author{Yalong Cao}
\address{Morningside Center of Mathematics, Institute of Mathematics \& State Key Laboratory of Mathematical Sciences, Academy of Mathematics and Systems Sciences, Chinese Academy of Sciences, Beijing, China}
\email{yalongcao@amss.ac.cn}
\author{Andrei Okounkov} 
\address{Department of Mathematics, Columbia University, New York, U.S.A.}
\email{okounkov@math.columbia.edu} 
\author{Yehao Zhou}
\address{\parbox{\linewidth}{Center for Mathematics and Interdisciplinary Sciences, Fudan University, Shanghai 200433, China\\
Shanghai Institute for Mathematics and Interdisciplinary Sciences (SIMIS), Shanghai 200433, China}}
\email{yehao.zhou@simis.cn}
\author{Zijun Zhou}
\address{School of Mathematical Sciences, Shanghai Jiao Tong University, Shanghai, China}
\email{zijun.zhou@sjtu.edu.cn}

\subjclass[2020]{
Primary
14D20, 
16G20, 
Secondary 
17B37} 
\keywords{Symmetric quiver varieties, Critical stable envelopes, Hyperbolic restrictions, Triangle lemma}

\begin{document}

\begin{abstract}
Symmetric quiver varieties with potentials are natural generalizations of 
Nakajima quiver varieties, and their equivariant critical cohomologies provide more flexible settings for geometric representation theory and enumerative geometry. In this paper, we study their geometric properties and show that they behave like universally deformed Nakajima quiver varieties. Based on this, we provide a new proof of the existence of critical stable envelopes on them. Following an idea of Nakajima, we give a sheaf theoretic interpretation of critical stable envelopes by 
the hyperbolic restriction in the affinization of symmetric quiver varieties. The associativity of hyperbolic restrictions implies the triangle lemma of critical stable envelopes. 
\end{abstract}

\maketitle

\section{Introduction}

Stable envelopes are introduced in the cohomology of Nakajima quiver varieties by Maulik and Okounkov \cite{MO}. 
They have important applications to geometric representation theory and enumerative geometry. 
In \cite{COZZ1}, the authors start to develop the theory of stable envelopes on critical loci, both in critical cohomology and critical $K$-theory. 
Their applications to the study of shifted quantum groups and curve counting on critical loci are explored in \cite{COZZ2, COZZ3}. 
More specifically, our critical loci are defined out of a smooth variety $X$ with a torus action by $\sT$, and a $\sT$-invariant regular function 
$$\sw\colon X\to \C. $$
Analogues to Nakajima varieties \cite{Nak01, Nak02}, for concreteness, we consider here the case when $X$ is a moduli space of quiver representations (without relations).
We further require that $X$ to be \textit{symmetric}, in the sense that the quiver as well as the framings are symmetric 
(see Definition \ref{def of sym quiver}). In this case, we call $X$ to be a \textit{symmetric quiver variety}. 

The readers shall not confuse quiver varieties here with Nakajima quiver varieties, where the former are defined without any relations on the quiver, while the latter 
are zero loci of certain moment maps. In fact, a Nakajima quiver variety can be realized as the critical locus of a symmetric quiver variety with the canonical cubic potential function, and the corresponding equivariant Borel-Moore homology and $K$-theory is identified with the equivariant critical cohomology and critical $K$-theory
e.g.~\cite[Ex.~6.13, Rmk.~6.14]{COZZ1}. In this sense, symmetric quiver varieties with potentials  provide more flexible settings for 
studying problems arising from geometric representation theory and enumerative geometry. 

In order to study problems in these more general settings, it is important to understand the geometry  of symmetric quiver varieties.
The first theorem of this paper is to show that symmetric quiver varieties behave like universally deformed Nakajima varieties, 
which processes flat morphisms to affine spaces, and having a list of remarkable geometric properties.  
More precisely, we have: 
\begin{Theorem}\label{main thm}
Let $X$ be a symmetric quiver variety.
\begin{enumerate}
\item There is a commutative diagram 
\begin{equation} 
\xymatrix{
\widetilde{X} \ar[r]^{\widetilde{\tau}} \ar[d]_{\pi_X} & \mathcal{H} \ar[d]^{p}\\
X \ar[r]^{\tau\,\,\,} & \mathcal{H}/\Gamma\,,
}
\nonumber \end{equation}
where
\begin{itemize}
    \item $\widetilde{X}$ is the product of a vector space and a universally deformed Nakajima variety,
        \item $\pi_X$ is proper and generically finite,
    \item $\widetilde{\tau}$ is a smooth morphism to an affine space $\calH$ such that for any $h\in \calH$ the fiber $\widetilde{\tau}^{-1}(h)$ is isomorphic to a Nakajima quiver variety,
    \item $\Gamma$ is a product of symmetric groups, i.e. $\Gamma\cong \prod_i\mathfrak{S}_{n_i}$, which acts on $\calH\cong \prod_i\bC^{n_i}$ naturally, and $p$ is the quotient map (in particular, $\calH/\Gamma\cong \prod_iS^{n_i}\bC$ is an affine space),
    \item $\tau$ is a flat morphism such that for any $h\in \calH$ the fiber $\tau^{-1}(p(h))$ is an irreducible normal variety with symplectic singularities, and $\pi_X\colon \widetilde{\tau}^{-1}(h)\to \tau^{-1}(p(h))$ is a symplectic resolution.
\end{itemize}
\item There is a $\Gamma$-invariant affine open subset $\calH^\circ\subset \calH$ (and denote $\widetilde{X}^\circ:=\widetilde{\tau}^{-1}(\mathcal H^\circ),X^\circ:=\tau^{-1}(\mathcal H^\circ/\Gamma)$) such that $\tau\colon X^\circ\to \calH^\circ/\Gamma$ is smooth and affine, and the following induced commutative diagram is Cartesian.
\begin{equation*}
\xymatrix{
\widetilde{X}^\circ \ar[r]^{\widetilde{\tau}} \ar[d]_{\pi_X} \ar@{}[dr]|{\Box} & \mathcal{H}^\circ \ar[d]^{p}\\
X^\circ \ar[r]^{\tau\,\,\,} & \mathcal{H}^\circ/\Gamma
}
\end{equation*}
\item The affinization $\Spec\Gamma(X,\mathcal O_X)$ is isomorphic to $\cM_0(Q,\bv,\bd)$, and the affinization morphism $X\to \cM_0(Q,\bv,\bd)$ is proper and birational. Moreover, all irreducible components of $X\times_{\cM_0(Q,\bv,\bd)} X$ except the diagonal $\Delta_X$ have dimension strictly smaller than $\dim X$. 
\end{enumerate}
\end{Theorem}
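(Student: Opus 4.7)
The plan is to address the three parts of the theorem in sequence, with the bulk of the structural work concentrated in part (1).

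For part (1), I would write $X$ as a GIT quotient $X = M /\!/_\theta G$, where $M = \operatorname{Rep}(Q,\bv,\bd)$ and $G = \prod_i \GL(v_i)$. The symmetry hypothesis on $(Q,\bd)$ endows $M$ with a canonical $G$-invariant symplectic form arising from the natural pairing between each arrow/framing and its dual, and hence with a $G$-equivariant moment map $\mu : M \to \fg^*$. Composing with the invariant-theoretic quotient $\fg^* /\!/ G \cong \calH/\Gamma$, where $\calH \cong \prod_i \bC^{n_i}$ records the diagonal entries of $\mu$ at each vertex and $\Gamma = \prod_i \mathfrak{S}_{n_i}$ is the corresponding Weyl group, produces the map $\tau$. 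The identification $\widetilde{X} \cong V \times \widetilde{\fM}$ as a product of an affine space and a universally deformed Nakajima variety is the heart of part (1): my plan is to choose a splitting of the arrow set of $Q$ into a positive half $Q_0$ and its opposite so that $M \cong T^{*}\operatorname{Rep}(Q_0,\bv) \oplus (\text{framings})$, let $\widetilde{\fM} \to \calH$ be the corresponding universal moment-map deformation of the Nakajima variety for $Q_0$, and take $V$ to be the trace-type/central part of the linear data that decouples from the symplectic reduction. Smoothness of $\widetilde{\tau}$ follows from freeness of $G$ on the $\theta$-stable locus; properness of $\pi_X$ combines the finite $\Gamma$-cover on the base with the fiberwise symplectic resolution; flatness of $\tau$ follows from equidimensional fibers over the Cohen--Macaulay base $\calH/\Gamma$; and the symplectic-singularities property of $\tau^{-1}(p(h))$ follows from Beauville's criterion applied to the fiberwise resolution $\pi_X$.

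For part (2), I take $\calH^\circ \subset \calH$ to be the complement of both the $\Gamma$-discriminant arrangement and the root-hyperplane arrangement over which the deformed Nakajima variety fails to be affine and the fiberwise $\pi_X$ fails to be an isomorphism. By Crawley-Boevey and Nakajima's generic-parameter theorems, over $\calH^\circ$ each fiber $\widetilde{\fM}_h$ is smooth affine, so the restricted $\pi_X$ is a fiberwise isomorphism; the Cartesian square then reduces to the assertion that $\widetilde{X}^\circ \to X^\circ$ is precisely the \'etale $\Gamma$-torsor pulled back from $\calH^\circ \to \calH^\circ/\Gamma$, which is automatic from the construction.

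For part (3), the equality $\Spec \Gamma(X,\oO_X) = \cM_0(Q,\bv,\bd)$ follows from Hilbert's finiteness theorem applied to the GIT quotient $X = M /\!/_\theta G$; properness of $X \to \cM_0$ is built into the GIT construction, and birationality follows because on the dense open locus of simple $\theta$-stable representations the affinization map is a scheme-theoretic isomorphism to its image. The main obstacle is the final dimension estimate on $X \times_{\cM_0} X$, asserting that the diagonal is the \emph{unique} top-dimensional component. For a single Nakajima variety this would fail (Hecke-type components provide extra top-dimensional strata), but the universal-deformation structure established in parts (1)--(2) saves the day: over $\calH^\circ/\Gamma$ the map $\tau$ is smooth and affine, so the restriction of $X \times_{\cM_0} X$ contains only the diagonal. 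Any non-diagonal component must therefore be supported entirely over the discriminant $(\calH \setminus \calH^\circ)/\Gamma$, which has codimension at least one in $\calH/\Gamma$; combined with equidimensionality of $\tau$-fibers from part (1), such components have dimension at most $\dim X - 1$. The delicate verification is that no such component can in fact attain dimension $\dim X$ via singular-fiber contributions, which I expect to follow from fiberwise application of Nakajima's semismallness to each Nakajima quiver variety together with upper-semicontinuity of fiber dimension.
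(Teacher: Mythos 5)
Your outline has the right overall shape (resolve via a family over $\calH/\Gamma$, pass to an open locus where everything is affine and $\pi_X$ is an isomorphism, then push dimension estimates into the singular fibers using semismallness), but the core construction of $\widetilde{X}$ in part (1) breaks down for the class of quivers actually allowed by Definition \ref{def of sym quiver}, and it is precisely this difficulty that drives the paper's proof.

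The gap is this. A symmetric quiver may have edge loops, and by Lemma \ref{self-dual torus} the arrow set decomposes as $Q_1 = \mathcal A \sqcup \mathcal A^* \sqcup \mathcal E$ where $\mathcal E$ consists of loops that are \emph{not} paired with duals. So there is no ``splitting of the arrow set into a positive half and its opposite'': the loop contribution $\bigoplus_{\varepsilon\in\mathcal E}\End(\bC^{\bv_{h(\varepsilon)}})$ is not a cotangent bundle, and $R(Q,\bv,\bd)$ carries no $G$-invariant symplectic form (only a Poisson structure, via the Kirillov--Kostant--Souriau bracket on the $\mathfrak{gl}^*$ summands). Consequently the moment map $\mu:M\to\fg^*$ on which your whole construction of $\tau$ and $\widetilde{X}$ hinges does not exist. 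The paper turns this around: rather than viewing $R(Q,\bv,\bd)$ as the \emph{source} of a moment map, it is realized as the zero fiber $(\mu^{\add})^{-1}(0)$ of a moment map on the larger space $R(Q^{\add},\bv,\bd)$, obtained by adding one loop per node; the Grothendieck--Springer resolution is then applied to each $\mathfrak{gl}(\bv_{h(\varepsilon)})^*$ factor, one per loop in $\mathcal E^{\add}=\mathcal E\sqcup Q_0$. This already explains why the base is $\calH=\prod_{\varepsilon\in\mathcal E^{\add}}\bC^{\bv_{h(\varepsilon)}}$, not $\fg^*/\!/G$.

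Even in the loop-free case $\mathcal E=\emptyset$, your plan produces the wrong family. The universal moment-map deformation $\mu^{-1}(\mathfrak Z)/\!/_\theta G$ of the Nakajima variety for the half-quiver lives over $\mathfrak Z\cong\bC^{Q_0}$, of dimension $|Q_0|$, whereas $\calH$ has dimension $\sum_i\bv_i$; your $\widetilde{\fM}\to\mathfrak Z$ does not fit into the required square over $\calH/\Gamma$. The paper's route to ``$\widetilde{X}=$ (vector space) $\times$ (universally deformed Nakajima variety)'' is to introduce the auxiliary quiver $Q^{\aux}$, replacing each loop of $Q^{\add}$ by a doubled $A_{n-1}$ leg, so that the center $\mathfrak Z^{\aux}$ of the auxiliary gauge group has exactly the dimension of $\calH$; the identification rests on Lemma \ref{T[SU(n)] quiver}, which recognizes the Grothendieck--Springer resolution $\widetilde{\mathfrak{gl}(n)}^*\to\mathfrak h^*$ as the family $\widetilde{\cN}_\zeta(Q^n,\bv^n,\bd^n)\times\bC\to\mathfrak Z^n\times\bC$, and on the stability-comparison Lemma \ref{compare stabilities}, which you do not address. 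Once part (1) is in place in the paper's form, your sketches of parts (2) and (3) — the Cartesian square over the open locus, Chevalley affineness, the ${\bC}^*$-contraction to the central fiber, and the fiberwise semismallness of the Nakajima symplectic resolution controlling $X\times_Y X$ — are essentially the arguments the paper runs.
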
 
In \S \ref{sect on quiv}, we recall the definition of symmetric quiver varieties and their flavour group actions. The proof of the above theorem occupies \S \ref{sect on proof}. 
We remark that Theorem \ref{main thm} has many applications, for example, it is indispensable in the computations of quantum multiplication by divisors on symmetric quiver varieties with potentials studied in \cite{COZZ2}. 
It can also be used to prove the following existence result\footnote{Another proof using different idea is given in \cite{COZZ1}.}.
\begin{Theorem}(Theorem \ref{thm stab corr}, Remark \ref{rmk on stab corr})
Let $X$ be a symmetric quiver variety with a torus $\sT$-action, $\sA\subseteq \sT$ be a self-dual subtorus, and $\fC$ be a chamber of the $\sA$-action. 
For any $\sA$-fixed locus $F$, 
let $\overline{\Attr}_\fC(\Delta_F)$ denote the closure of the attracting set of the diagonal $\Delta_F\subset F\times F$ in $X\times F$. Then 
$\sum_{F\in \Fix_\sA(X)}[\overline{\Attr}_\fC(\Delta_F)]$ is a \textit{stable envelope correspondence}.
In particular, for any $\sT$-invariant potential function $\sw$, we have the induced
\textit{critical stable envelope}:
$$\Stab_{\fC }\colon H^\sT(X^\sA,\sw)\to  H^\sT(X,\sw). $$
\end{Theorem}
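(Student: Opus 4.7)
The plan is to deduce the existence of the stable envelope correspondence $\sL := \sum_{F}[\overline{\Attr}_\fC(\Delta_F)]$ from the Maulik--Okounkov theorem on Nakajima quiver varieties by exploiting the family structure of Theorem \ref{main thm}, and then to derive the critical version by pulling back through vanishing cycles. The nontrivial axiomatic content is the triangular support condition with respect to the chamber partial order on $\Fix_\sA(X)$; the Lagrangian support and the diagonal normalization follow directly from the definition of the attracting closures.

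First, I would work on the universal cover $\widetilde{X}$. By Theorem \ref{main thm}(1), $\widetilde{X}$ is a product of a vector space with a universally deformed Nakajima variety, fibered in smooth Nakajima varieties over $\calH$ via $\widetilde{\tau}$. The self-dual subtorus $\sA\subseteq\sT$ and the chamber $\fC$ lift to $\widetilde{X}$, and over each $h\in \calH$ one has the classical stable envelope correspondence on the fiber due to \cite{MO}. Since the construction of stable envelopes is constructible in families (the fiberwise attracting closures assemble into a global cycle of the expected dimension), this yields a $\widetilde{\sL}$ on $\widetilde{X}\times_{\calH}\widetilde{X}^\sA$ satisfying the stable envelope axioms relatively over $\calH$. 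The $\Gamma$-equivariance of the construction follows from the naturality of attracting sets under $\Gamma$, so $\widetilde{\sL}$ is $\Gamma$-equivariant.

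Next I would descend $\widetilde{\sL}$ to $X^\circ$. By Theorem \ref{main thm}(2), the diagram on $X^\circ$ is Cartesian with $\pi_X$ a $\Gamma$-Galois cover (étale on the smooth locus), so attracting sets commute with pullback along $\pi_X$ and $\Gamma$-invariants of $\widetilde{\sL}$ coincide with $\sum_F[\overline{\Attr}_\fC(\Delta_F)]|_{X^\circ}$. The axioms are preserved under this étale descent. The extension from $X^\circ$ to all of $X$ is the step where Theorem \ref{main thm}(3) becomes essential: properness and birationality of the affinization $X\to \cM_0(Q,\bv,\bd)$, together with the dimension bound that every irreducible component of $X\times_{\cM_0}X$ other than $\Delta_X$ has dimension strictly less than $\dim X$, ensures that the closure $\overline{\Attr}_\fC(\Delta_F)$ in $X\times F$ is irreducible of the expected dimension and that the triangular support condition cannot be violated by extra components coming from the singular fibers. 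Concretely, any potential correction supported over $X\setminus X^\circ$ would live on strata of dimension strictly less than the Lagrangian dimension, and hence must vanish.

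The main obstacle is the triangular support condition: one must rule out components of $\overline{\Attr}_\fC(\Delta_F)\cap (X\times F')$ of full Lagrangian dimension for $F'\succ_\fC F$. This is precisely where the dimension bound in Theorem \ref{main thm}(3) (via the affinization $X\to\cM_0$) is used; the argument is the standard one for universally deformed symplectic resolutions, transferred from $\widetilde{X}$ to $X$ using that $\pi_X$ is proper and generically finite. Finally, the critical stable envelope is obtained by tensoring $\sL$ with the vanishing cycle sheaf of the $\sT$-invariant potential $\sw \boxminus \sw|_{X^\sA}$ on $X\times X^\sA$ and specializing along the correspondence; compatibility of attracting correspondences with the pullback--pushforward formalism of vanishing cycles, combined with $\sT$-equivariance of $\sw$, yields the claimed map $\Stab_\fC\colon H^\sT(X^\sA,\sw)\to H^\sT(X,\sw)$.
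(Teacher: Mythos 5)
Your broad strategy overlaps the paper's in one essential respect --- both leverage the Grothendieck--Springer family $\widetilde{X}\to\cH$ and the symplectic geometry of the central Nakajima fiber --- but your route through ``fiberwise MO plus étale descent plus extension'' is both more roundabout and, at the crucial step, incomplete. The paper's proof is direct: it simply verifies the two axioms of Definition \ref{stab corr_coh} for the cycle $\sum_F[\overline{\Attr}_\fC(\Delta_F)]$. Axiom (i) is immediate. Axiom (ii) reduces by a degree count to the dimension bound $\dim\overline{\Attr}_\fC(\Delta_F)\cap(F'\times F)<\frac12(\dim F+\dim F')$ for $F'\neq F$, which is exactly Lemma \ref{lem dim bound}. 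There is no need to first build a relative stable envelope on $\widetilde{X}$ over $\cH$, descend it along a $\Gamma$-torsor, and then argue about corrections on $X\setminus X^\circ$; that machinery never appears and would only be justified if one already had the dimension bound in hand.

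The genuine gap in your proposal is precisely that dimension bound. You attribute it to Theorem \ref{main thm}(3), i.e.\ to smallness of the affinization: ``every irreducible component of $X\times_{\cM_0}X$ other than $\Delta_X$ has dimension $<\dim X$.'' That estimate is too weak. It bounds $\overline{\Attr}_\fC(\Delta_F)\cap(F'\times F)$ (which indeed sits inside $X\times_Y X$ once the first coordinate is $\sA$-fixed) only by $\dim X-1$, whereas what you need is $\frac12(\dim F+\dim F')-1$, which is typically much smaller than $\dim X-1$. The sharper estimate is what Lemma \ref{lem dim bound} proves, and its ingredients are \emph{not} Theorem \ref{main thm}(3) but rather: (a) the affineness of $\widetilde{X}^\circ$ over $\cH^\circ$ from Theorem \ref{main thm}(2), which forces $\overline{\Attr}_\fC(\Delta_{\widetilde F})\cap(\widetilde F'^\circ\times\widetilde F^\circ)$ to be empty and hence costs at least one dimension over the non-generic base; and (b) the isotropicity of $\overline{\Attr}_\fC(\Delta_{\widetilde F})_0$ inside $\widetilde X_0\times\widetilde F_0$ with respect to the symplectic form $\Omega\boxminus\Omega|_{\widetilde F_0}$, established via the two-form $\omega$ on $\widetilde{X}$ and a Whitney stratification limit argument, together with the isotropic-intersection bound of \cite[Lem.~3.4.1]{MO}. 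Gesturing at ``the standard argument for universally deformed symplectic resolutions'' is not enough here: the isotropic argument on the fiber over $0\in\cH$ is the nontrivial core of the proof and must be made explicit. (Equivalently one could invoke Remark \ref{rmk F1xF2 distinct}, but that remark itself rests on Lemma \ref{lem F1xF2 fiber}, i.e.\ on the same isotropic computation, not on Theorem \ref{main thm}(3).) The last paragraph on the critical version is fine and matches the paper's use of the canonical map $\can$ and convolution.
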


Motivated by the work of Nakajima \cite{Nak1}, we give a sheaf theoretic interpretation of critical stable envelopes in terms of 
Braden's hyperbolic restriction functor \eqref{hy loc fun} on the affinization of symmetric quiver varieties \S \ref{sect on conn to stab}, \S \ref{sect on per on sqv}. 
As a consequence of the associativity of the hyperbolic restriction functors (Proposition \ref{prop hyp res associative}), 
the \textit{triangle lemma} of critical stable envelopes follows, i.e., we have\,: 
\begin{Theorem}(Theorem \ref{thm tri lem})
Let $\fC'$ be a face of $\fC$ and $\sA'\subset \sA$ be the subtorus whose Lie algebra is spanned by $\fC'$. 
Then the following diagram 
\begin{equation*}
\xymatrix{
H^\sT(X^\sA,\sw) \ar[rr]^{\Stab_{\fC}} \ar[dr]_{\Stab_{\fC/\fC'}} & & H^\sT(X,\sw), \\
 & H^\sT(X^{\sA'},\sw) \ar[ur]_-{\Stab_{\fC'}} &
}
\end{equation*}
is commutative.
\end{Theorem}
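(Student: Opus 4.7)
The plan is to reduce the triangle identity to the associativity of Braden's hyperbolic restriction, using the sheaf-theoretic interpretation of critical stable envelopes on the affinization.

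First, I would invoke the construction developed in Sections \ref{sect on conn to stab} and \ref{sect on per on sqv}, which realizes $\Stab_\fC$ as the cohomological shadow of the $\sA$-hyperbolic restriction functor applied to a distinguished complex on $\cM_0(Q,\bv,\bd)$. Concretely, the proper birational morphism $X\to \cM_0$ from Theorem \ref{main thm}(3) lets one push the critical cohomology of $X$ (with respect to the $\sT$-invariant potential $\sw$) down to a $\phi_{\sw_0}$-twisted constructible sheaf on $\cM_0$, and Braden's hyperbolic restriction functor for the $\sA$-action on $\cM_0$ then produces the stable envelope correspondence via its canonical comparison map. The analogous construction for the subtorus $\sA'$ with chamber $\fC'$ realizes $\Stab_{\fC'}$, while the hyperbolic restriction of the resulting sheaf on $\cM_0^{\sA'}$ with respect to the residual $\sA/\sA'$-action and the chamber $\fC/\fC'$ realizes $\Stab_{\fC/\fC'}$.

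Next, Proposition \ref{prop hyp res associative} supplies a canonical isomorphism
\[
\Phi_\fC^{\sA} \;\cong\; \Phi_{\fC'}^{\sA'} \circ \Phi_{\fC/\fC'}^{\sA/\sA'}
\]
of hyperbolic restriction functors (writing $\Phi_\fD^\sB$ for the hyperbolic restriction associated to a chamber $\fD$ of a subtorus $\sB$). Evaluating this isomorphism on the distinguished complex from the previous step and passing to characteristic classes in cohomology translates it into the equality of operators $\Stab_\fC = \Stab_{\fC'}\circ \Stab_{\fC/\fC'}$, which is exactly the commutativity of the triangle in the statement.

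The genuine content is not in this last deduction but in the hypotheses that feed it. The crucial technical points — that the vanishing cycle functor $\phi_\sw$ is compatible with hyperbolic restriction (where the $\sT$-invariance of $\sw$ is essential), and that $\cM_0$ has mild enough singularities for Braden's formalism to apply to the perverse sheaves at hand — are established in Sections \ref{sect on conn to stab} and \ref{sect on per on sqv}, and rest squarely on the structural properties of symmetric quiver varieties recorded in Theorem \ref{main thm}, especially the existence of the flat family $\tau$ with symplectic resolution $\pi_X$ and the control over the affinization morphism. Once these inputs are in hand, the triangle lemma follows as a formal functorial consequence of associativity, and this is the step I expect to require the most care.
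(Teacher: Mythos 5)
Your proposal identifies the right ingredients — the sheaf-theoretic interpretation of $\Stab_\fC$ via the morphism to the affinization, the associativity of Braden's hyperbolic restriction (Proposition~\ref{prop hyp res associative}), and the commutation of $\varphi_\sw$ with proper pushforward and hyperbolic restriction — and the overall route does match the paper's. However, there is a genuine gap at exactly the step you flag as ``the step I expect to require the most care'': you claim that ``evaluating this isomorphism [of hyperbolic restriction functors] on the distinguished complex\dots translates it into the equality of operators $\Stab_\fC = \Stab_{\fC'}\circ\Stab_{\fC/\fC'}$.'' That inference does not go through by itself. The natural isomorphism $\Res^Y_{Y^\sA}\cong\Res^{Y^{\sA'}}_{Y^\sA}\Res^Y_{Y^{\sA'}}$, applied to $\IC_Y$, says only that the two targets are naturally identified. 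The stable envelope correspondences are encoded by three \emph{specific} morphisms
$\sS_\fC\colon\IC_{Y^\sA}\to\Res^Y_{Y^\sA}\IC_Y$, $\sS_{\fC'}\colon\IC_{Y^{\sA'}}\to\Res^Y_{Y^{\sA'}}\IC_Y$, $\sS_{\fC/\fC'}\colon\IC_{Y^\sA}\to\Res^{Y^{\sA'}}_{Y^\sA}\IC_{Y^{\sA'}}$, defined by the fundamental classes $[\overline{\Attr}(\Delta_F)]$ through the Steinberg-type homology identification, and one has to prove the nontrivial compatibility $\Res^{Y^{\sA'}}_{Y^\sA}(\sS_{\fC'})\circ\sS_{\fC/\fC'}=\sS_\fC$ under that natural isomorphism. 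Functoriality of the associativity isomorphism does not force this; a priori, $\sS_\fC$ could differ from the composite by any automorphism of $\Res^Y_{Y^\sA}\IC_Y$.

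The paper closes this gap by a uniqueness/rigidity argument that is entirely absent from your plan. Proposition~\ref{prop sheaf char of stab} shows that, because $\mathsf{JH}$ is small (Theorem~\ref{main thm}(3) and Proposition~\ref{prop smallness fixed locus}) and hence $\mathsf{JH}_*\cC_X\cong\IC_Y$, the restriction map on Hom spaces to the open locus $Y^{\circ\sA}$ is an \emph{isomorphism}; thus any morphism $\IC_{Y^\sA}\to\Res^Y_{Y^\sA}\IC_Y$ is determined by its restriction there. Over $Y^{\circ\sA}$ the morphism $\sS_\fC$ becomes the canonical hyperbolic-restriction isomorphism \eqref{hyp res isom} for the smooth affine symplectic variety $Y^\circ$, and the same is true for $\sS_{\fC'}$ and $\sS_{\fC/\fC'}$. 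Then Proposition~\ref{prop sheaf tri lem} verifies the triangle identity \emph{on the open locus} — where it reduces to the elementary compatibility of \eqref{hyp res isom} with the base-change square \eqref{Cartesian square of attr} — and the uniqueness forces it globally. Only after this sheaf-level triangle identity is established does applying $\varphi_\sw$ and taking $\sT$-equivariant hypercohomology yield Theorem~\ref{thm tri lem}. So the essential idea your proposal is missing is the reduction to the open locus via the smallness of $\mathsf{JH}$, which is precisely what converts the formal associativity of hyperbolic restriction into the triangle lemma for the actual stable envelope morphisms.
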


\subsection*{Acknowledgments}
A.O. would like to thank SIMIS for hospitality. We would like to thank Kavli IPMU for bringing us together.

\section{Definitions of symmetric quiver varieties}\label{sect on quiv}


A \textit{quiver} $Q$ is a pair of finite sets $Q=(Q_0,Q_1)$ together with two maps $h,t\colon Q_1\to Q_0$. We will call $Q_0$ the set of nodes and $Q_1$ the set of arrows and $h$ (resp.~$t$) sends an arrow to its head (resp.~tail). If $a\in Q_1$, then we will write $t(a)\to h(a)$ to denote the arrow $a$. We define the 
\textit{adjacency matrix}
\begin{align*}
    (\mathsf Q_{ij})_{i,j\in Q_0}=\#(i\to j)
\end{align*}
and \textit{Cartan matrix}
\begin{align*}
    (\mathsf C_{ij})_{i,j\in Q_0}=2\delta_{ij}-\mathsf Q_{ij}-\mathsf Q_{ji}.
\end{align*}
Take dimension vectors $\mathbf{v}\in \bN^{Q_0}$,
$\underline{\mathbf d}=(\mathbf{d}_{\mathrm{in}},\mathbf d_{\mathrm{out}})\in \bN^{Q_0}\times \bN^{Q_0}$,  the \textit{space of framed representations} of $Q$ with \textit{gauge dimension} $\mathbf{v}$ and \textit{in-coming framing dimension} $\mathbf{d}_{\mathrm{in}}$ and \textit{out-going framing dimension} $\mathbf d_{\mathrm{out}}$ is
\begin{align}\label{equ rvab}
    R(\mathbf{v},\underline{\bd}):=R(Q,\mathbf{v},\underline{\bd})=\bigoplus_{a\in Q_1}\underbrace{\Hom(\bC^{\mathbf v_{t(a)}},\bC^{\mathbf v_{h(a)}})}_{X_a}\oplus \bigoplus_{i\in Q_0}\left(\underbrace{\Hom(\bC^{\bd_{\mathrm{in},i}},\bC^{\mathbf v_i})}_{A_i}\oplus \underbrace{\Hom(\bC^{\mathbf v_i},\bC^{\bd_{\Out,i}})}_{B_i}\right).
\end{align}
The \textit{gauge group} $G=\prod_{i\in Q_0}\GL(\mathbf v_i)$ naturally acts on $R(Q,\mathbf{v},\underline{\bd})$ by compositions with maps. 

Choose a stability condition $\theta\in \bQ^{Q_0}$ such that $\theta$-semistable representations are $\theta$-stable: 
$$R(Q,\mathbf{v},\underline{\bd})^{ss}=R(Q,\mathbf{v},\underline{\bd})^s\neq \emptyset, $$
and define the \textit{quiver variety} as the GIT quotient:
$$\mathcal M_{\theta}(\mathbf v,\underline{\bd}):=\mathcal M_{\theta}(Q,\mathbf v,\underline{\bd}):=R(Q,\mathbf{v},\underline{\bd})/\!\!/_{\theta} G=R(Q,\mathbf{v},\underline{\bd})^s/G. $$
As we do not impose any relation on the quiver, the above space is a smooth quasi-projective variety. 

We define an action of
\begin{align}\label{edge group}
    G_{\mathrm{edge}}:=\prod_{i,j\in Q_0}\GL(\mathsf Q_{ij})
\end{align}
on $R(Q,\mathbf{v},\underline{\bd})$: given a pair of nodes $i,j\in Q_0$, the contribution of the edges from $i$ to $j$ is $\Hom(\bC^{\mathbf v_i},\bC^{\mathbf{v}_j})\otimes \bC^{\mathsf Q_{ij}}$, then the factor $\GL(\mathsf Q_{ij})$ naturally acts on the second component.

We also consider the following group actions on $R(Q,\mathbf{v},\underline{\bd})$: 
\begin{align}\label{framing group}
    G_{\mathrm{fram}}^{\mathrm{in}}=\prod_{i\in Q_0}\GL(\bd_{\In,i})\curvearrowright \bigoplus_{i\in Q_0}\Hom(\bC^{\bd_{\In,i}},\bC^{\mathbf v_i}),\quad G_{\mathrm{fram}}^{\mathrm{out}}=\prod_{i\in Q_0}\GL(\bd_{\Out,i})\curvearrowright \bigoplus_{i\in Q_0}\Hom(\bC^{\mathbf v_i},\bC^{\bd_{\Out,i}}).
\end{align}
We define the \textit{flavour group}
\begin{align}\label{flavour group}
\sF:=G_{\mathrm{fram}}^{\mathrm{in}}\times G_{\mathrm{fram}}^{\mathrm{out}}\times G_{\mathrm{edge}}. 
\end{align}
It is easy to see that $\mathsf F\cong \Aut_{G}(R(Q,\mathbf{v},\underline{\bd}))$.

\begin{Definition} 
We say that a framing $\underline{\bd}$ is \textit{symmetric} if $\bd_{\In}=\bd_{\Out}=\bd$. In this case, we simplify the notations as 
\begin{equation}\label{equ on sym qu}R(\bv,\bd)=R(Q,\bv,\bd)= R(Q,\mathbf{v},\underline{\bd}), \quad 
\cM_\theta(\bv,\bd)=\cM_\theta(Q,\bv,\bd)=\mathcal M_{\theta}(Q,\mathbf v,\underline{\bd}), \end{equation} 
and we define $G_{\mathrm{fram}}^{\mathrm{\diag}}\subseteq  \mathsf F$ to be the diagonal subgroup of $G_{\mathrm{fram}}^{\mathrm{in}}\times G_{\mathrm{fram}}^{\mathrm{out}}$.
\end{Definition}

\begin{Definition}\label{def of sym quiver}
We say $Q$ is \textit{symmetric} if its adjacency matrix $ (\mathsf Q_{ij})_{i,j\in Q_0}$ is symmetric. 
For a symmetric quiver $Q$, we call the associated quiver variety \textit{symmetric quiver variety} (SQV) if the framing is symmetric.
\end{Definition}

\begin{Definition}\label{def:self-dual tori}
Let $(Q,\mathbf v,\mathbf{d})$ be a symmetric quiver with symmetric framing as above. Suppose that there exists a torus $\mathsf H$ together with a linear action of $\mathsf H$ on $R(Q,\mathbf v,\mathbf{d})$. We say that a linear action of a torus $\mathsf H$ on $R(Q,\mathbf v,\mathbf{d})$ is \textit{self-dual} if the $\mathsf H$-action commutes with the gauge group $G$-action, and $R(Q,\mathbf v,\mathbf{d})$ is self-dual as $(G\times \mathsf H)$-representation. 

We say that a torus action $\mathsf H$ on the symmetric quiver variety $\cM_\theta(Q,\mathbf v,\mathbf{d})$ is \textit{self-dual} if it is induced from a self-dual action of $\mathsf H$ on $R(Q,\mathbf v,\mathbf{d})$.
\end{Definition}

The following lemma is elementary, and we leave the proof to interested readers.

\begin{Lemma}\label{self-dual torus}
Let $(Q,\mathbf v,\mathbf{d})$ be a symmetric quiver with symmetric framing as above. Suppose that there exists a torus $\mathsf H$ together with a self-dual action on $R(Q,\mathbf v,\mathbf{d})$. Then there exists a decomposition of the set of arrows $Q_1=\mathcal A\sqcup \mathcal A^*\sqcup \mathcal E$ with the following properties
\begin{enumerate}
\setlength{\parskip}{1ex}
    \item There is a one-to-one correspondence between $\mathcal A$ and $\mathcal A^*$, which sends an arrow $a\in \mathcal A$ to an arrow $a^*\in \mathcal A^*$ with opposite direction, i.e. $t(a)=h(a^*)$ and $h(a)=t(a^*)$;
    \item $\mathcal E$ is a set of edge loops, i.e. $t(\varepsilon)=h(\varepsilon)$ for all $\varepsilon\in \mathcal E$;
    \item Consider the torus $(\bC^*)^{\mathcal A}$ which maps $(X_a,X_{a^*})_{a\in \mathcal A}$ to $(t_aX_a,t_a^{-1}X_{a^*})_{a\in \mathcal A}$ and acts on $(X_\varepsilon)_{\varepsilon\in \mathcal E}$ and $(A_i,B_i)_{i\in Q_0}$ trivially, then there exists a maximal torus $T_{\mathrm{fram}}\subseteq  G_{\mathrm{fram}}^{\mathrm{diag}}$ and an element $g\in G_{\mathrm{edge}}$ such that $\mathsf H$-action factors through the subgroup $T_{\mathrm{fram}}\times g(\bC^*)^{\mathcal A}g^{-1}\subseteq  \mathsf F$.
\end{enumerate}
\end{Lemma}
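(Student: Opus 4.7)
The strategy is to exploit the self-duality of $R:=R(Q,\bv,\bd)$ as a $G\times \mathsf{H}$-representation and to diagonalize the resulting torus action on each $G$-isotypic multiplicity space.

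First, since $\mathsf{H}$ commutes with the gauge group $G$, the $\mathsf{H}$-action factors through $\mathsf{F}=\Aut_G(R)\cong G_{\mathrm{fram}}^{\mathrm{in}}\times G_{\mathrm{fram}}^{\mathrm{out}}\times G_{\mathrm{edge}}$. Decomposing $R$ into $G$-isotypic components gives three families: writing $U_i^{\mathrm{in}}$ and $U_i^{\mathrm{out}}$ for the framing spaces (both $\cong \bC^{\bd_i}$), the $V_i$-isotypic part is $A_i=V_i\otimes (U_i^{\mathrm{in}})^*$, the $V_i^*$-isotypic part is $B_i=V_i^*\otimes U_i^{\mathrm{out}}$, and the $V_j\otimes V_i^*$-isotypic part for each ordered pair $(i,j)$ (including loops $i=j$) has multiplicity $M_{ji}\cong \bC^{\mathsf Q_{ij}}$. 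The three factors of $\mathsf F$ act on these multiplicity spaces, producing $\mathsf H$-representations $\sigma_i\colon \mathsf H\to\GL(U_i^{\mathrm{in}})$, $\tau_i\colon \mathsf H\to\GL(U_i^{\mathrm{out}})$, and $\rho_{ij}\colon \mathsf H\to\GL(M_{ji})$. Translating self-duality summand-by-summand (and using $\mathsf Q_{ij}=\mathsf Q_{ji}$) yields the isomorphisms $\sigma_i\cong\tau_i$, from $A_i^*\cong B_i$, and $\rho_{ij}\cong \rho_{ji}^\vee$ from the edge part; in particular $\rho_{ii}$ is self-dual.

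Next, diagonalize to read off the arrow decomposition. For the framing, pick an eigenbasis of $U_i^{\mathrm{out}}$ with $\tau_i$-weights $\mu_{i,1},\ldots,\mu_{i,\bd_i}$; using $\sigma_i\cong\tau_i$, pick a compatible eigenbasis of $U_i^{\mathrm{in}}$ so that $\sigma_i=\tau_i$ as diagonal matrices, after which the framing part of the $\mathsf H$-action lands in the diagonal maximal torus $T_{\mathrm{fram}}\subseteq G_{\mathrm{fram}}^{\mathrm{diag}}$. For the edges, for each pair $i\ne j$ choose an $\mathsf H$-eigenbasis of $M_{ji}$ indexed by a set of new arrows $\mathcal A_{ij}$ with weights $\lambda_a$, and, using $\rho_{ji}\cong \rho_{ij}^\vee$, a paired eigenbasis of $M_{ij}$ indexed by opposite arrows $\mathcal A^*_{ij}$ with weights $-\lambda_a$. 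For loops $i=j$, self-duality of $\rho_{ii}$ decomposes $M_{ii}$ into a weight-zero piece, contributing to $\mathcal E_i$, and pairs of weight-$\pm\lambda$ subspaces for $\lambda\ne 0$, each contributing one loop to $\mathcal A$ and its partner to $\mathcal A^*$. Taking unions, the decomposition $Q_1=\mathcal A\sqcup\mathcal A^*\sqcup \mathcal E$ satisfies (1) and (2).

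Finally, record the edge-side basis changes as a single element $g\in G_{\mathrm{edge}}$. In the new eigenbasis the $\mathsf H$-action on the edge part is precisely the standard $(\bC^*)^{\mathcal A}$-action (scaling $X_a$ by $t_a$ and $X_{a^*}$ by $t_a^{-1}$, consistent with the sign-paired weights), so in the original arrow basis it factors through $g(\bC^*)^{\mathcal A}g^{-1}$. Combined with the framing factorization, the full $\mathsf H$-action factors through $T_{\mathrm{fram}}\times g(\bC^*)^{\mathcal A}g^{-1}\subseteq \mathsf F$, which is (3).

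The main bookkeeping hurdle is tracking sign conventions: $\GL(\bd_i)^{\mathrm{in}}$ acts on the multiplicity space $(U_i^{\mathrm{in}})^*$ by the dual-inverse while $\GL(\bd_i)^{\mathrm{out}}$ acts on $U_i^{\mathrm{out}}$ standardly, so that self-duality of $R$ as $G\times\mathsf{H}$-module yields the isomorphism $\sigma_i\cong\tau_i$ (matching the diagonal embedding into $G_{\mathrm{fram}}^{\mathrm{diag}}$) rather than an anti-diagonal condition; once this convention is pinned down, the remainder is a routine diagonalization of a self-dual torus representation and a combinatorial split of the loop weights.
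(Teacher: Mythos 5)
The paper leaves this lemma without proof (``elementary, left to interested readers''), so there is nothing internal to compare against; your strategy — read the $\mathsf{H}$-action off the $G$-isotypic multiplicity spaces, translate self-duality into constraints on those, then diagonalize and record the basis change as $g\in G_{\mathrm{edge}}$ and a choice of $T_{\mathrm{fram}}$ — is the natural one, and your treatment of the edge part is correct: the dualities $M_{ji}\cong M_{ij}^\vee$ for $i\ne j$ together with the self-duality of $M_{ii}$ do produce the partition $\mathcal A\sqcup\mathcal A^*\sqcup\mathcal E$, and $G_{\mathrm{edge}}=\prod_{i,j}\GL(\mathsf Q_{ij})$ permits the independent basis changes you need on each $M_{ji}$, so recording them as one $g$ is legitimate.

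There is, however, a genuine gap in the framing step. Self-duality of $R$ as a $(G\times\mathsf H)$-module only gives an \emph{isomorphism} $U_i^{\mathrm{in}}\cong U_i^{\mathrm{out}}$ of $\mathsf H$-modules, i.e.\ that $\sigma_i,\tau_i\colon\mathsf H\to\GL(\mathbf d_i)$ have the same weight multiset, hence are conjugate in $\GL(\mathbf d_i)$; it does not say $\sigma_i=\tau_i$ as homomorphisms. But for the framing projection of $\phi(\mathsf H)$ to land in \emph{some} maximal torus $T_{\mathrm{fram}}$ of $G_{\mathrm{fram}}^{\mathrm{diag}}=\{(g,g)\}$ you need precisely $\sigma_i(h)=\tau_i(h)$ for all $h$: the freedom in choosing $T_{\mathrm{fram}}$ only lets you vary the common maximal torus of $\prod_i\GL(\mathbf d_i)$, it cannot decouple the two coordinates, and the conjugation by $g\in G_{\mathrm{edge}}$ does not touch the framing factors at all. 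Your step ``pick a compatible eigenbasis of $U_i^{\mathrm{in}}$ so that $\sigma_i=\tau_i$'' is a change of basis on $U_i^{\mathrm{in}}$ and $U_i^{\mathrm{out}}$ \emph{separately}, i.e.\ a conjugation by some $(g_1,g_2)\in G_{\mathrm{fram}}^{\mathrm{in}}\times G_{\mathrm{fram}}^{\mathrm{out}}$ with $g_1\neq g_2$ in general; that changes the $\mathsf H$-action itself and is not among the moves the statement grants. To close the gap you would need either an argument that $\sigma_i=\tau_i$ already holds — for instance by taking ``self-dual'' to mean that the $\mathsf H$-action preserves the canonical trace pairing on $R$, which does force equality — or an explicit remark that the conclusion is being read modulo $\mathsf F$-conjugation of the $\mathsf H$-action. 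As written, the ``routine diagonalization'' you invoke in your closing paragraph is exactly where the passage from $\sigma_i\cong\tau_i$ to $\sigma_i=\tau_i$ is silently assumed.
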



The $\sA$-fixed loci of a symmetric quiver variety are symmetric quiver varieties.

\begin{Lemma}
\cite[Lem.~A.7]{COZZ1}
\label{fix pts of sym quiv var}
Suppose that $X=\mathcal M_{\theta}(Q,\mathbf v,\mathbf{d})$ is a symmetric quiver variety, and $\sA$ is a torus with a self-dual action on $X$. Let $\sigma$ be a cocharacter of $\sA$, then the $\sigma$-fixed points locus $X^\sigma$ is a disjoint union of symmetric quiver varieties. Moreover, the induced action of $\sA$ on $X^\sigma$ is self-dual.
\end{Lemma}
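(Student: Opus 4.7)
The plan is to carry out the standard fixed-point decomposition for GIT quotients and then check that the self-dual structure descends. By Lemma \ref{self-dual torus}, the $\sA$-action on $R(Q,\bv,\bd)$ factors through $T_{\mathrm{fram}} \times g(\C^*)^{\mathcal{A}}g^{-1}$, and up to conjugating by $g^{-1}$ I may assume the action takes the explicit form described there. A point $\phi \in R(Q,\bv,\bd)^s$ represents a $\sigma$-fixed point of $X = R^s/G$ if and only if there exists a cocharacter $\lambda\colon \C^* \to G$ such that $\sigma(t)\cdot \phi = \lambda(t)\cdot \phi$ for all $t \in \C^*$. A standard GIT argument then identifies $X^\sigma$ with the disjoint union, over conjugacy classes of such $\lambda$, of GIT quotients $R^{(\sigma,\lambda)} /\!\!/_{\theta} G^\lambda$, where $R^{(\sigma,\lambda)} \subseteq R$ is the $(\sigma,\lambda^{-1})$-weight-zero subspace and $G^\lambda \subseteq G$ is the centralizer of $\lambda$.

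For each such $\lambda$, I decompose each gauge space $\C^{\bv_i} = \bigoplus_w V_{i,w}$ according to $\lambda$-weights, and each framing $\C^{\bd_i} = \bigoplus_w F_{i,w}$ according to $\sigma|_{T_{\mathrm{fram}}}$-weights. Using the decomposition $Q_1 = \mathcal{A}\sqcup \mathcal{A}^*\sqcup \mathcal{E}$ from Lemma \ref{self-dual torus}, the space $R^{(\sigma,\lambda)}$ is naturally the representation space of a new quiver $\hat{Q}$ with nodes $\hat{Q}_0 = \{(i,w)\}$ and arrows given by the weight-matching components of the arrows of $Q$. The involutive pairing $a \leftrightarrow a^*$ exchanges the arrows $(t(a),w)\to (h(a),w+w_a)$ and $(h(a),w+w_a)\to (t(a),w)$, while loops $\varepsilon \in \mathcal{E}$ (of $\sigma$-weight $0$) produce loops at $(i,w)$, so $\hat{Q}$ is symmetric. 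The inclusion $T_{\mathrm{fram}} \subseteq G_{\mathrm{fram}}^{\mathrm{diag}}$ ensures the in- and out-framings decompose identically, so the induced framing $\hat{\bd}$ is symmetric. The stability $\theta$ lifts to $\hat{\theta}_{(i,w)} = \theta_i$ on $(\hat{Q},\hat{\bv},\hat{\bd})$, and a King-type argument identifies the corresponding GIT quotient with $\cM_{\hat{\theta}}(\hat{Q},\hat{\bv},\hat{\bd})$, exhibiting each component of $X^\sigma$ as a symmetric quiver variety.

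For the induced self-duality, observe that the residual $\sA$-action commutes with $\sigma$, so it preserves the weight decomposition $R = \bigoplus_w R_w$ (graded by the joint $(\sigma,\lambda)$-weight). The self-dual pairing on $R$ as a $(G\times \sA)$-representation restricts to a non-degenerate pairing between $R_w$ and $R_{-w}$; restricting to $w = 0$ and combining with the $a \leftrightarrow a^*$ correspondence yields a self-dual structure on $R^{(\sigma,\lambda)}$ as a $(G^\lambda \times \sA)$-representation, which is precisely the condition in Definition \ref{def:self-dual tori} for the new quiver data.

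The principal technical obstacle will be bookkeeping: I must verify that the one-to-one pairing $a\leftrightarrow a^*$ of Lemma \ref{self-dual torus} interacts correctly with the $\lambda$-weight decomposition so that dual arrows in $\hat{Q}$ appear with matching multiplicities, and simultaneously check that the residual $\sA$-action satisfies Definition \ref{def:self-dual tori} for $(\hat{Q},\hat{\bv},\hat{\bd})$. Beyond this combinatorial check, no input is required other than the standard GIT description of fixed loci and the structural statement of Lemma \ref{self-dual torus}.
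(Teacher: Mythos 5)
Your argument is correct and follows essentially the same route as the paper's proof, namely the fixed-point decomposition of quiver varieties from \cite[\S 2.3]{MO}. The only stylistic difference is that where you verify the symmetry of $\hat Q$ and of $\hat{\mathbf d}$ by explicit weight bookkeeping, the paper derives all three conclusions (symmetry of the arrows, symmetry of the framing, and self-duality of the residual $\sA$-action) at once from the single observation that $R(Q,\mathbf v,\mathbf d)^\phi$ is self-dual as a $(G^\phi\times\sA)$-representation --- the $\phi$-invariant pairing on $R$ restricts to a perfect pairing on the $\phi$-weight-zero part --- which is the same observation you invoke only for the last of the three.
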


\section{Proof of Theorem \ref{main thm}}\label{sect on proof}

\begin{proof}[Proof of Theorem \ref{main thm}]
The proof of the theorem occupies this section, specifically given as follows:

(1) follows by combining Proposition \ref{X tilde as Nakajima quiv}, Proposition \ref{prop of tau}, Remark \ref{pi_X is a symp res}.

(2) follows from Lemma \ref{open part Cartesian}, Lemma \ref{X^circ is affine}.

(3) follows from Lemma \ref{lem proper birational} and Proposition \ref{prop smallness}.
\end{proof}

\subsection{Family of smooth symplectic varieties}
\begin{Setting}\label{setting of family of symp}
Let $X$ be smooth quasi-projective variety with a torus $\sA$-action, and let $\sigma$ be a cocharacter of $\sA$. Suppose that there exists an $\sA$-invariant two-form $\omega\in \Omega^2(X)$, a smooth connected variety $B$ endowed with trivial $\sA$-action and a smooth and $\sA$-equivariant morphism 
$$\phi\colon X\to B, $$ such that $\forall\,\, b\in B$, the restriction of $\omega$ to $X_b:=\phi^{-1}(b)$ is nondegenerate, i.e. $\omega|_{X_b}$ is a \textit{symplectic structure}.
\end{Setting}

Examples satisfying Setting \ref{setting of family of symp} come from universal deformations of equivariant symplectic resolutions \cite[\S 3.7]{MO},
for instance the deformations of Nakajima quiver varieties as recalled below.

\begin{Example}\label{extended nak quiv}
Consider a quiver $Q=(Q_0,Q_1)$. Take $\mathbf{v},\mathbf{d}\in \bN^{Q_0}$, then the space of framed representations of $Q$ with gauge dimension $\mathbf{v}$ and in-coming framing dimension $\mathbf{d}$ is
\begin{align*}
    M:=R(Q,\bv,\bd,\mathbf 0)=\bigoplus_{a\in Q_1}\underbrace{\Hom(\bC^{\mathbf v_{t(a)}},\bC^{\mathbf v_{h(a)}})}_{X_a}\oplus \bigoplus_{i\in Q_0}\underbrace{\Hom(\bC^{\mathbf{d}_i},\bC^{\mathbf v_i})}_{A_i}.
\end{align*}
The gauge group $G=\prod_{i\in Q_0}\GL(\mathbf v_i)$ naturally acts on $M$ by compositions with maps. This induces a Hamiltonian $G$-action on the cotangent bundle $T^*M$ with moment map $\mu\colon T^*M\to \mathfrak{g}^*$ where $\mathfrak{g}$ is the Lie algebra of $G$. Define the linear subspace $\mathfrak{Z}:=(\mathfrak{g}^*)^G\subseteq  \mathfrak{g}^*$. We note that $$\mathfrak{Z}=(\mathfrak{g}/[\mathfrak{g},\mathfrak{g}])^*\cong \bigoplus_{i\in Q_0}\bC\cdot \mathrm{Id}_i.$$ 
Fix a stability condition $\theta\in \bQ^{Q_0}$, we define the \textit{universally-deformed Nakajima quiver variety} as the GIT quotient:
\begin{equation}\label{equ on univ nak var}
    \widetilde{\cN}_{\theta}(\overline Q,\mathbf v,\mathbf{d}):=\mu^{-1}(\mathfrak{Z})/\!\!/_{\theta} G.
\end{equation}
We assume that $\theta$ is generic such that $\theta$-semistable representations are $\theta$-stable. We assume that $\widetilde{\cN}_{\theta}(\overline Q,\mathbf v,\mathbf{d})$ is nonempty, then the image of $\mu$ contains $\mathfrak{Z}$ by \cite[Prop.~2.2.2]{MO}. Let $\mu^{-1}(\mathfrak{Z})^{s}$ be the $\theta$-stable locus, then $\mu$ is smooth along $\mu^{-1}(\mathfrak{Z})^{s}$ and $G$ acts on $\mu^{-1}(\mathfrak{Z})^{s}$ freely by \cite[Lem.~4.1.7]{Gin}. 

Consider the morphism 
$$\phi\colon \widetilde{\cN}_{\theta}(\overline Q,\mathbf v,\mathbf{d})\to \mathfrak{Z}, $$ 
induced by $\mu|_{\mu^{-1}(\mathfrak{Z})^{s}}\colon \mu^{-1}(\mathfrak{Z})^{s}\to \mathfrak{Z}$. Then $\phi$ is smooth, surjective and
\begin{align*}
    \phi^{-1}(\lambda)\cong \cN_{\theta,\lambda}(\overline Q,\mathbf v,\mathbf{d}):=\mu^{-1}(\lambda)/\!\!/_{\theta} G
\end{align*}
for arbitrary $\lambda\in \mathfrak{Z}$. Define open subset 
\begin{align}\label{open part}
\mathfrak{Z}^\circ := \mathfrak{Z}\setminus \bigcup_{\mathbf u}H_{\mathbf u},
\end{align}
where $\mathbf u\in \bN^{Q_0}$ and $\mathbf u_i\leqslant \mathbf v_i$ for all $i\in Q_0$ and $\mathbf u\neq 0$, and 
$$H_{\mathbf u}=\{\lambda\in \mathfrak{Z}\colon \sum_{i\in Q_0}\lambda_i\mathbf u_i=0\}. $$ 
$\mathfrak{Z}^\circ $ is complement of the union of hyperplanes, in particular it is nonempty. By \cite[Thm.~1.3]{CB}, every element in $\mu^{-1}(\mathfrak{Z}^\circ)$ is a simple quiver representation, in particular every element in $\mu^{-1}(\mathfrak{Z}^\circ)$ is $\theta$-stable for arbitrary $\theta\in \bQ^{Q_0}$. It follows that $\phi^{-1}(\mathfrak{Z}^\circ)\cong \mu^{-1}(\mathfrak{Z}^\circ)/\!\!/ G$ is an affine variety, hence $\phi|_{\phi^{-1}(\mathfrak{Z}^\circ)}$ is an affine morphism.

Let $(\bC^{*})^{Q_1}$ be the torus that acts on $R$ by
\begin{align*}
t_a\cdot(X_a,A_i)=(t_aX_a,A_i).
\end{align*}
Define $G_{\mathrm{fram}}=\prod_{i\in Q_0}\GL(\mathbf{d}_i)$, which naturally acts on framing vector space $W=\bigoplus_{i\in Q_0}\bC^{\mathbf{d}_i}$. Then the $(G_{\mathrm{fram}}\times (\bC^{*})^{Q_1})$-action on $M$ induces a natural $(G_{\mathrm{fram}}\times (\bC^{*})^{Q_1})$-action on $T^*M$ which preserves the symplectic structure. Moreover the moment map $\mu\colon T^*M\to \mathfrak{g}^*$ is $\left(G_{\mathrm{fram}}\times (\bC^{*})^{Q_1}\right)$-invariant. We denote $T_{\mathrm{fram}}\subseteq  G_{\mathrm{fram}}$ to be a maximal torus, and define 
$$\sA=T_{\mathrm{fram}}\times (\bC^{*})^{Q_1}. $$
It is known that there is a $(G\times \sA)$-invariant symplectic form $\omega_{T^*M}$ on $T^*M$ \cite[\S 1]{CB}. Then $\omega_{T^*M}$ induces a two-form $\omega$ on $\widetilde{\cN}_{\theta}(\overline Q,\mathbf v,\mathbf{d})$, which is $\sA$-invariant by construction. The restriction of $\omega$ to every fiber $\cN_{\theta,\lambda}(\overline Q,\mathbf v,\mathbf{d})$ is nondegenerate \cite[Lem.~4.1.7]{Gin}.

In summary, $\widetilde{\cN}_{\theta}(\overline Q,\mathbf v,\mathbf{d})$, $\sA=T_{\mathrm{fram}}\times (\bC^{*})^{Q_1}$, $B=\mathfrak{Z}$, and $B^\circ=\mathfrak{Z}^\circ$ 
satisfy the setting of Setting \ref{setting of family of symp}. 
\end{Example}

\begin{Lemma}\label{symp res has cond star}
In Setting \ref{setting of family of symp}, assume there exists an open dense subset $B^\circ\subseteq  B$ such that $\phi|_{X^\circ}\colon X^\circ\to B^\circ$ is an affine morphism where $X^\circ:=\phi^{-1}(B^\circ)$. 
Then $\phi|_{X^{\sigma}}\colon X^{\sigma}\to B$ is smooth and the restriction of $\omega$ to $X^{\sigma}_b:=(\phi|_{X^{\sigma}})^{-1}(b)$ is nondegenerate for any $b\in B$, and $\phi|_{X^{\sigma}}\colon X^{\sigma}\cap X^\circ\to B^\circ$ is affine.
\end{Lemma}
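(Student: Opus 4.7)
The statement decomposes naturally into three assertions, each of which should reduce to a standard fact: (a) smoothness of $\phi|_{X^\sigma}$ follows from smoothness of $\phi$ together with Iversen's theorem on fixed loci of torus actions, (b) fiberwise nondegeneracy follows from the linear algebra of an invariant symplectic form under a torus action, and (c) the affineness claim is essentially a closed-immersion composition. None of the three steps should present a serious obstacle; the slightly subtler step is the linear algebra of (b), but this is classical.

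First I would invoke Iversen's theorem for the $\bC^*$-action induced by $\sigma$ on the smooth variety $X$: this provides that $X^\sigma$ is a smooth closed subvariety of $X$ with $T_x X^\sigma = (T_x X)^\sigma$ for each $x \in X^\sigma$. To show $\phi|_{X^\sigma}$ is smooth, fix $x \in X^\sigma$ with $b := \phi(x)$. Since $\phi$ is smooth, $d\phi_x \colon T_x X \to T_b B$ is surjective; since $\sA$ acts trivially on $B$, $d\phi_x$ is $\sigma$-equivariant with trivial target action, hence annihilates every nonzero weight subspace. Therefore its restriction to $(T_x X)^\sigma = T_x X^\sigma$ is still surjective, and combined with smoothness of $X^\sigma$ this yields smoothness of $\phi|_{X^\sigma}\colon X^\sigma \to B$.

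Next, for nondegeneracy of $\omega|_{X^\sigma_b}$: fix $b \in B$ and $x \in X^\sigma_b$. Because $\sA$ acts trivially on $B$, the fiber $X_b$ is $\sigma$-invariant and smooth, and $X^\sigma_b = (X_b)^\sigma$ is smooth with tangent space $(T_x X_b)^\sigma$. Decompose $T_x X_b = \bigoplus_n V_n$ into $\sigma$-weight spaces. The $\sigma$-invariance of $\omega|_{X_b}$ forces $\omega(V_m, V_n) = 0$ unless $m + n = 0$, so with respect to this decomposition $\omega|_{X_b}$ is block-off-diagonal, pairing $V_m$ with $V_{-m}$. Since $\omega|_{X_b}$ is nondegenerate on $T_x X_b$, each such pairing must be nondegenerate, and in particular $V_0 \times V_0 \to \bC$ is nondegenerate; this is exactly nondegeneracy of $\omega|_{X^\sigma_b}$ at $x$.

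Finally, for the affineness claim, $X^\sigma$ is closed in $X$ (fixed loci of torus actions on separated schemes are closed), so $X^\sigma \cap X^\circ$ is closed in $X^\circ$. The morphism $\phi|_{X^\sigma \cap X^\circ}\colon X^\sigma \cap X^\circ \to B^\circ$ factors as the closed immersion $X^\sigma \cap X^\circ \hookrightarrow X^\circ$ followed by the affine morphism $\phi|_{X^\circ}\colon X^\circ \to B^\circ$, and is therefore affine. This completes the plan.
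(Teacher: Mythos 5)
Your proof is correct and takes essentially the same route as the paper: $\sigma$-equivariance of $d\phi_x$ gives surjectivity on invariants (hence smoothness of $\phi|_{X^\sigma}$), and $X^\sigma\cap X^\circ$ is a closed subvariety of $X^\circ$ so the restricted map to $B^\circ$ is affine. The only difference is that you write out the weight-space argument for nondegeneracy of $\omega|_{X^\sigma_b}$, which the paper's proof silently omits as standard.
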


\begin{proof}
For every $x\in X^\sigma$, the tangent map $d\phi_x\colon T_xX\to T_{\phi(x)}B$ is surjective by the assumption that $\phi\colon X\to B$ is smooth. Since $\phi$ is $\sigma$-equivariant, 
$d\phi_x$ is also $\sigma$-equivariant. Then it follows that $d\phi_x$ maps $T_xX^\sigma=(T_xX)^\sigma$ surjectively onto $T_{\phi(x)}B$; thus the restriction of $\phi$ to the $\sigma$-fixed locus $\phi|_{X^{\sigma}}\colon X^\sigma\to B$ is also smooth. Since $X^{\sigma}\cap X^\circ$ is a closed subvariety of $X^\circ$, the map $\phi|_{X^{\sigma}}\colon X^{\sigma}\cap X^\circ\to B^\circ$ is affine. 
\end{proof}

\subsection{A Grothendieck-Springer type resolution for symmetric quiver varieties}\label{sec sym quiv vs nak quiv}


Throughout this section, we take $Q$ to be a symmetric quiver and $\mathbf v,\mathbf{d}\in \bN^{Q_0}$ such that $\mathbf{v}_i\neq 0$ for all $i\in Q_0$ (otherwise we remove any node $i$ such that $\mathbf v_i=0$) and $\mathbf{d}\neq 0$. As in Definition \ref{def of sym quiver}, we have the space of symmetrically framed representations $R(Q,\mathbf v,\mathbf{d})$ with a linear $G=\prod_{i\in Q_0}\GL(\mathbf v_i)$ action and the quiver variety $$X:=\mathcal M_\theta(Q,\mathbf v,\mathbf{d})=R(Q,\mathbf v,\mathbf{d})/\!\!/_\theta G.$$ We fix a decomposition of set of arrows $Q_1=\mathcal A\sqcup \mathcal A^*\sqcup \mathcal E$ then take $\sA=T_{\mathrm{fram}}\times (\bC^*)^{\mathcal A}$ as in the Lemma \ref{self-dual torus}.


Let us rewrite the space of representations $R(Q,\mathbf v,\mathbf{d})$ in a way that resembles the construction of Nakajima quiver variety. Consider the following two new quivers
\begin{enumerate}
\setlength{\parskip}{1ex}
    \item $Q^{\add}$ is obtained from $Q$ by adding one loop to each node, so $Q^{\add}_0=Q_0$,  $Q^{\add}_1=Q_1\sqcup Q_0$;
    \item $Q^{\rem}$ which is obtained from $Q$ by removing all loops in the set $\mathcal E$ to each node, so $Q^{\rem}_0=Q_0$,  $Q^{\rem}_1=Q_1\setminus \mathcal E$.
\end{enumerate}
We decompose the set of arrows $Q^{\add}_1=\mathcal A\sqcup \mathcal A^*\sqcup \mathcal E^{\add}$, where $\mathcal E^{\add}:=\mathcal E\sqcup Q_0$ is the union of the original set of loops $\mathcal E$ and all newly-added loops. Then the space of representations is
\begin{align}\label{rep of Q add}
    R(Q^{\add},\mathbf v,\mathbf{d})=R(Q^{\rem},\mathbf v,\mathbf{d})\oplus \bigoplus_{\varepsilon\in \mathcal E^{\add}}\mathfrak{gl}(\mathbf v_{h(\varepsilon)})^*.
\end{align}
We note that $R(Q^{\rem},\mathbf v,\mathbf{d})\cong T^*M$ where 
\begin{align*}
    M=\bigoplus_{a\in \mathcal A}\Hom(\bC^{\mathbf v_{t(a)}},\bC^{\mathbf v_{h(a)}})\oplus \bigoplus_{i\in Q_0}\Hom(\bC^{\mathbf{d}_i},\bC^{\mathbf v_i}).
\end{align*}
In particular, $R(Q^{\rem},\mathbf v,\mathbf{d})$ is endowed with the natural cotangent bundle symplectic structure. Note that $\mathfrak{gl}(\mathbf v_{i})^*$ is endowed with the Kirillov-Kostant-Souriau Poisson structure. Therefore $R(Q^{\add},\mathbf v,\mathbf{d})$ is endowed with a natural Poisson structure, defined as the direct sum of Poisson structures on each of its summand. 

The group $(G\times \sA)$ acts on $R(Q^{\add},\mathbf v,\mathbf{d})$ and $R(Q^{\rem},\mathbf v,\mathbf{d})$ naturally by compositions with maps. Then the $G$-action on $R(Q^{\add},\mathbf v,\mathbf{d})$ is Hamiltonian with moment map
\begin{align*}
    \mu^{\add}\colon R(Q^{\add},\mathbf v,\mathbf{d})\to \mathfrak{g}^*, \quad \mu^{\add}=(\mu^{\rem},\mathrm{pr}), 
\end{align*}
where $\mu^{\rem}\colon R(Q^{\rem},\mathbf v,\mathbf{d})\to \mathfrak{g}^*$ is the moment map for $R(Q^{\rem},\mathbf v,\mathbf{d})$ and $\mathrm{pr}$ maps every $\mathfrak{gl}(\mathbf v_{i})^*$ identically to itself.
\begin{Lemma}
There is $(G\times \sA)$-equivariant isomorphism $R(Q,\mathbf v,\mathbf{d})\cong (\mu^{\add})^{-1}(0)$.
\end{Lemma}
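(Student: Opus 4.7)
The plan is to exhibit the $Q_0$-indexed loops of $Q^{\add}$ as slack variables that are uniquely determined by the condition $\mu^{\add}=0$ from the remaining representation data. Since $Q_1^{\add}=Q_1\sqcup Q_0$, there is a canonical decomposition $R(Q^{\add},\mathbf v,\mathbf d)=R(Q,\mathbf v,\mathbf d)\oplus \bigoplus_{i\in Q_0}\mathfrak{gl}(\mathbf v_i)^{*}$. Writing a point on the right as $(r,\vec\varepsilon,\vec\omega)$ with $r\in R(Q^{\rem},\mathbf v,\mathbf d)$, $\vec\varepsilon$ the tuple of $\mathcal E$-loops in $Q$, and $\vec\omega=(\omega_i)_{i\in Q_0}$ the tuple of new loops, the description $\mu^{\add}=(\mu^{\rem},\mathrm{pr})$ reads component-wise at node $i$ as $\mu^{\add}(r,\vec\varepsilon,\vec\omega)_i = \mu^{\rem}(r)_i + \sum_{\varepsilon\in\mathcal E,\,h(\varepsilon)=i}\varepsilon + \omega_i$, so the equation $\mu^{\add}=0$ determines each $\omega_i$ uniquely on the zero locus.

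I would therefore define $\Phi\colon R(Q,\mathbf v,\mathbf d)\to (\mu^{\add})^{-1}(0)$ by $(r,\vec\varepsilon)\mapsto (r,\vec\varepsilon,\vec\omega)$ with $\omega_i:=-\mu^{\rem}(r)_i-\sum_{\varepsilon\in\mathcal E,\,h(\varepsilon)=i}\varepsilon$; direct substitution shows that $\Phi$ lands in $(\mu^{\add})^{-1}(0)$, and the projection that forgets the $Q_0$-loop components is a two-sided inverse, so $\Phi$ is an algebraic isomorphism. The $G$-equivariance is immediate: $\mu^{\rem}$ is a $G$-equivariant moment map, and each $\omega_i$ and each $\varepsilon\in\mathfrak{gl}(\mathbf v_i)^{*}$ transforms by the coadjoint action of the $i$-th factor of $G$, so every term in the defining formula for $\omega_i$ transforms identically.

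The only step that requires a bit of care is $\sA$-equivariance. By Lemma \ref{self-dual torus}(3), $\sA$ acts trivially on every $\mathcal E$-loop of $Q$, and one naturally extends the $\sA$-action to $R(Q^{\add},\mathbf v,\mathbf d)$ by letting it act trivially on the new $Q_0$-loops as well. It then suffices to verify that $\mu^{\rem}\colon T^{*}M\to \mathfrak g^{*}$ is $\sA$-invariant, which is the standard fact that any torus action commuting with the Hamiltonian $G$-action and preserving the symplectic form preserves the $G$-moment map. Given this, both sides of the defining formula for $\omega_i$ are $\sA$-invariant, matching the trivial $\sA$-action on $\vec\omega$, so $\Phi$ is $(G\times\sA)$-equivariant. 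This compatibility is the main point; everything else is formal bookkeeping on the decomposition of $R(Q^{\add},\mathbf v,\mathbf d)$.
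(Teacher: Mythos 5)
Your proof is correct and takes essentially the same route as the paper: both identify $R(Q^{\add},\mathbf v,\mathbf d)\cong R(Q,\mathbf v,\mathbf d)\oplus\mathfrak g^*$ (the new $Q_0$-loops being the $\mathfrak g^*$ summand), observe $\mu^{\add}(x,y)=\mu(x)+y$, and solve $y=-\mu(x)$; the paper states the equivariance is ``obvious'' whereas you usefully spell out that $\sA$ acts trivially on all loop components (old and new) and that $\mu^{\rem}$ is $\sA$-invariant, which is exactly why the paper's claim holds.
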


\begin{proof}
By writing 
$$R(Q,\mathbf v,\mathbf{d})=R(Q^{\rem},\mathbf v,\mathbf{d})\oplus \bigoplus_{\varepsilon\in \mathcal E}\mathfrak{gl}(\mathbf v_{h(\varepsilon)})^*,$$ 
$R(Q,\mathbf v,\mathbf{d})$ is equipped with natural Poisson structure and the $(G\times \sA)$-action is Hamiltonian with moment map $\mu\colon R(Q,\mathbf v,\mathbf{d})\to \mathfrak{g}^*$. Then with respect to the decomposition 
$$R(Q^{\add},\mathbf v,\mathbf{d})=R(Q,\mathbf v,\mathbf{d})\oplus \bigoplus_{i\in Q_0}\mathfrak{gl}(\mathbf v_{i})^*=R(Q,\mathbf v,\mathbf{d})\oplus \mathfrak{g}^*,$$ 
$$\mu^{\add}(x,y)=\mu(x)+y. $$
The the equation $\mu^{\add}=0$ removes the $\mathfrak{g}^*$ component in $R(Q^{\add},\mathbf v,\mathbf{d})$ and leaves us with $R(Q,\mathbf v,\mathbf{d})$, i.e. $R(Q,\mathbf v,\mathbf{d})\cong (\mu^{\add})^{-1}(0)$. This isomorphism is obviously $(G\times \sA)$-equivariant.
\end{proof}


Let $L$ be a complex reductive group and we fix a Borel subgroup $B$ with a maximal torus $H\subseteq  B$. Let the Weyl group of $L$ be $W$. Denote the corresponding Lie algebras $\mathfrak{l}:=\Lie(L),\mathfrak{b}:=\Lie(B),\mathfrak{h}:=\Lie(H)$.
We have the the following commutative diagram (\cite[(3.2.6)]{CG}):
\begin{equation}\label{Grothendieck-Springer resolution}
\xymatrix{
\widetilde{\mathfrak{l}}^*:=L\times^B \mathfrak{b}^* \ar[r]^-{\nu_{\mathfrak{l}}} \ar[d]_{\pi_{\mathfrak{l}}} & \mathfrak{h}^* \ar[d]^{p_{\mathfrak{l}}}\\
\mathfrak{l}^* \ar[r]^-{\rho_{\mathfrak{l}}} & \mathfrak{h}^*/W\cong \mathfrak{l}^*/L.
}
\end{equation}
Here $p_{\mathfrak{l}}$ and $\rho_{\mathfrak{l}}$ are quotient maps, and $\pi_{\mathfrak{l}}$ maps $(g,b)\in L\times^B \mathfrak{b}^*$ to $g\cdot b\in \mathfrak{l}^*$, and $\nu_{\mathfrak{l}}$ maps $(g,b)\in L\times^B \mathfrak{b}^*$ to $b|_{\mathfrak{h}}\in \mathfrak{h}^*$. We note that $\pi_{\mathfrak{l}}$ is equivariant with respect to the natural $L$-actions.

\begin{Theorem}[{\cite[\S II 4.7]{Slo}}]\label{GS resolution}
In the above commutative diagram 
\begin{enumerate}
\setlength{\parskip}{1ex}
    \item $\nu_{\mathfrak{l}}$ is a smooth morphism;
    \item $p_{\mathfrak{l}}$ is a finite and surjective morphism;
    \item $\pi_{\mathfrak{l}}$ is a proper and surjective morphism;
    \item $\rho_{\mathfrak{l}}$ is a flat morphism;
    \item $\forall\, t\in \mathfrak{h}^*$, the induced morphism $\nu_{\mathfrak{l}}^{-1}(t)\to \rho_{\mathfrak{l}}^{-1}(p_{\mathfrak{l}}(t))$ is a resolution of singularities.
\end{enumerate}
\end{Theorem}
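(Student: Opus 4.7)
The theorem is the standard Grothendieck--Springer package in the form recalled in \cite{Slo}. My plan is to deduce the five assertions from two structural inputs: the realisation of $\widetilde{\mathfrak{l}}^*\to L/B$ as a vector bundle, and the Chevalley--Kostant description of the adjoint quotient $\mathfrak{l}^*\to \mathfrak{l}^*/L$.

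For item (1), I would exploit that $\widetilde{\mathfrak{l}}^*\to L/B$ is a $B$-equivariant vector bundle of rank $\dim\mathfrak{b}$, and that $\nu_{\mathfrak{l}}$ is linear on each fibre; the induced bundle morphism $\widetilde{\mathfrak{l}}^*\to L/B\times \mathfrak{h}^*$ is smooth (surjective on fibres with constant-rank kernel $(\mathfrak{n}^+)^*$), and composing with the smooth projection to $\mathfrak{h}^*$ yields smoothness of $\nu_{\mathfrak{l}}$. Item (2) is immediate: $p_{\mathfrak{l}}$ is the quotient of an affine space by the finite Weyl group $W$, so it is finite and surjective.

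For item (3), I would realise $\widetilde{\mathfrak{l}}^*$ as a closed incidence subvariety of $L/B\times \mathfrak{l}^*$ --- intuitively the locus of pairs $(gB,\xi)$ with $\xi$ lying in the $\mathrm{Ad}^*$-translate of $\mathfrak{b}^*$ --- so that $\pi_{\mathfrak{l}}$ is the restriction of the second projection. Properness then follows from projectivity of $L/B$, and surjectivity is the dual of the classical fact that every element of $\mathfrak{l}$ lies in some Borel subalgebra. Item (4) is Kostant's theorem: under the Chevalley identification $\mathfrak{l}^*/L\cong \mathfrak{h}^*/W$, the map $\rho_{\mathfrak{l}}$ becomes the adjoint quotient, whose target is a polynomial algebra and whose fibres all have constant dimension $\dim\mathfrak{l}-\rk\mathfrak{l}$; miracle flatness, applied to the smooth (hence Cohen--Macaulay) source $\mathfrak{l}^*$, then yields flatness of $\rho_{\mathfrak{l}}$.

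Finally, for item (5), smoothness of $\nu_{\mathfrak{l}}^{-1}(t)$ follows from (1), and properness of the induced map $\nu_{\mathfrak{l}}^{-1}(t)\to \rho_{\mathfrak{l}}^{-1}(p_{\mathfrak{l}}(t))$ is inherited from (3). Birationality is checked over the regular semisimple locus in $\mathfrak{h}^*/W$: there each fibre of $\rho_{\mathfrak{l}}$ is a single regular semisimple coadjoint orbit, and the choice of $t\in \mathfrak{h}^*$ singles out one of the $|W|$ Borels through the corresponding element, yielding an isomorphism on an open dense subset. The step I expect to require the most care is (4): the equidimensionality of fibres needed for miracle flatness ultimately rests on Kostant's fine analysis of the adjoint quotient. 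Once that input is in hand, the rest of the theorem assembles directly from the bundle structure, properness of $L/B$, and the already-established parts of the diagram, as detailed in \cite{Slo}.
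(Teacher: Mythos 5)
The paper does not prove this theorem; it cites Slodowy and uses the statement as a black box, so your task was to reconstruct a proof on your own. Your outline for items (1)--(4) is the standard argument and is correct: the vector-bundle structure of $\widetilde{\mathfrak{l}}^*\to L/B$ for (1), finiteness of the Weyl quotient for (2), the incidence-variety realisation inside $L/B\times\mathfrak{l}^*$ plus projectivity of $L/B$ for (3), and Kostant's equidimensionality combined with miracle flatness for (4).

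There is a genuine gap in (5). You verify birationality only when $p_{\mathfrak{l}}(t)$ lies in the regular \emph{semisimple} locus of $\mathfrak{h}^*/W$, but the theorem asserts a resolution of singularities for \emph{every} $t\in\mathfrak{h}^*$ --- in particular for $t=0$, where $\rho_{\mathfrak{l}}^{-1}(0)$ is the nilpotent cone and contains no semisimple elements at all, so your argument as written does not even start. The correct substitute is the locus of \emph{regular} elements (centraliser of minimal dimension $\mathrm{rk}\,\mathfrak{l}$): by Kostant, the regular elements form a dense open single orbit in each fibre $\rho_{\mathfrak{l}}^{-1}(p_{\mathfrak{l}}(t))$, and over a regular $\xi$ there is a unique Borel subalgebra containing $\xi$ compatible with the chosen $t$, so $\pi_{\mathfrak{l}}^{-1}(\xi)\cap\nu_{\mathfrak{l}}^{-1}(t)$ is a single reduced point; this gives birationality for all $t$ and specialises to the $|W|$-Borel count you invoke in the regular-semisimple case. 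You should also record surjectivity of the fibre map (the image is closed by properness and contains the dense regular orbit, and $\rho_{\mathfrak{l}}^{-1}(p_{\mathfrak{l}}(t))$ is irreducible by Kostant), and irreducibility of $\nu_{\mathfrak{l}}^{-1}(t)$ (it is an affine bundle over $L/B$), both of which are needed for the phrase ``resolution of singularities'' to make sense. With those repairs the proof is complete and matches the argument in Slodowy.
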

In above, $\pi_{\mathfrak{l}}\colon \widetilde{\mathfrak{l}}^*\to \mathfrak{l}^*$ is called the \textit{Grothendieck-Springer resolution} of $\mathfrak{l}^*$.
We apply this resolution to each $\mathfrak{gl}(\mathbf v_i)^*$ factor in $R(Q^{\add},\mathbf v,\mathbf{d})$, namely we define
\begin{align}\label{rep of Q add_resolved}
    \widetilde{R}(Q^{\add},\mathbf v,\mathbf{d})=R(Q^{\rem},\mathbf v,\mathbf{d})\times \prod_{\varepsilon\in \mathcal E^{\add}} \widetilde{\mathfrak{gl}(\mathbf v_{h(\varepsilon)})}^*.
\end{align}
Then we get a $(G\times \sA)$-equivariant proper and surjective and generically finite morphism 
$$\pi_R\colon \widetilde{R}(Q^{\add},\mathbf v,\mathbf{d})\to {R}(Q^{\add},\mathbf v,\mathbf{d}). $$  
It follows that the induced morphism on the quotients:
\begin{align*}
    \pi_{X}\colon \widetilde{X}:=\pi_R^{-1}\left(\left((\mu^{\add})^{-1}(0)\right)^{s}\right)/\!\!/ G\to\left((\mu^{\add})^{-1}(0)\right)^{s}/\!\!/ G\cong \mathcal M_{\theta}(Q,\mathbf v,\mathbf{d})=X
\end{align*}
is an $\sA$-equivariant proper and surjective morphism. 

Moreover, $\pi_{X}\colon \widetilde{X}\to X$ fits into the following commutative diagram of $\sA$-equivariant morphisms:
\begin{equation}\label{Gro-Spr res for quiv}
\xymatrix{
\widetilde{X} \ar[r]^{\widetilde{\tau}} \ar[d]_{\pi_X} & \mathcal{H} \ar[d]^{p}\\
X \ar[r]^{\tau\,\,\,} & \mathcal{H}/\Gamma\,.
}
\end{equation}
Here $\mathcal H=\prod_{\varepsilon\in \mathcal E^{\add}}\bC^{\mathbf v_{h(\varepsilon)}}$ is the direct sum of the dual of the Cartan subalgebras of $\mathfrak{gl}(\mathbf v_{h(\varepsilon)})$, $\Gamma=\prod_{\varepsilon\in \mathcal E^{\add}}\mathfrak{S}_{\mathbf v_{h(\varepsilon)}}$ is the product of Weyl groups, $p\colon \mathcal H\to \mathcal H/\Gamma$ is the quotient map, $\widetilde{\tau}$ and $\tau$ are induced from the compositions of the horizontal maps in the following diagram
\begin{equation}\label{Gro-Spr res for quiv before quot}
\xymatrix{
\widetilde{R}(Q^{\add},\mathbf v,\mathbf{d}) \ar[r] \ar[d]_{\pi_R} \ar@{}[dr]|{\Box}  & \prod_{\varepsilon\in \mathcal E^{\add}} \widetilde{\mathfrak{gl}(\mathbf v_{h(\varepsilon)})}^* \ar[d]_{\pi} \ar[r]^{\nu} & \prod_{\varepsilon\in \mathcal E^{\add}}\bC^{\mathbf v_{h(\varepsilon)}} \ar[d]^{p}\\
{R}(Q^{\add},\mathbf v,\mathbf{d}) \ar[r] & \prod_{\varepsilon\in \mathcal E^{\add}} \mathfrak{gl}(\mathbf v_{h(\varepsilon)})^* \ar[r]^{\rho\quad} & \prod_{\varepsilon\in \mathcal E^{\add}}\bC^{\mathbf v_{h(\varepsilon)}}/\mathfrak{S}_{\mathbf v_{h(\varepsilon)}}\,,
}
\end{equation}
where the left square is Cartesian, and horizontal maps there are projections to components, and the square on the right-hand-side is the the diagram \eqref{Grothendieck-Springer resolution} for $\mathfrak{l}=\bigoplus_{\varepsilon\in \mathcal E^{\add}} \mathfrak{gl}(\mathbf v_{h(\varepsilon)})$.

Our next goal is to show that $\widetilde{\tau}\colon \widetilde{X} \to \mathcal{H} $ satisfies conditions in Setting \ref{setting of family of symp}, 
by relating $\widetilde{X}$ to a universally-deformed Nakajima quiver variety, see Proposition \ref{X tilde as Nakajima quiv}.
In particular this will imply that $\widetilde{X}$ is smooth, which is not obvious from its definition.


\subsection{An auxiliary quiver}
We construct a new quiver $Q^{\aux}$ with new dimension vectors $\mathbf v^{\aux},\mathbf{d}^{\aux}$ from $(Q^{\add},\mathbf v,\mathbf{d})$ by the following procedure. For every loop $\varepsilon\in \mathcal E^{\add}$, and let $n=\mathbf v_{h(\varepsilon)}$, we replace the $\varepsilon$ by a doubled $A_{n-1}$ quiver:
\begin{equation}\label{aux quiv}
\xymatrix{
 \circled{n} \ar@(dr,ur)_{\varepsilon} 
& \ar@{~>}[r] & \qquad \fillcircled{n}    \ar@<.5ex>[r]  & \ar@<.5ex>[l] \fillcircled{n-1}  \ar@<.5ex>[r]  & \ar@<.5ex>[l] \cdots  \ar@<.5ex>[r]  & \ar@<.5ex>[l] \fillcircled{1}
}.
\end{equation}
$\mathbf v^{\aux}$ is determined by the above replacement, and we define $\mathbf{d}^{\aux}$ to be equal to $\mathbf{d}$ on the original node of $Q^{\add}$ and extend to newly-added nodes by zero. We note that 
\begin{align}\label{rep of Q aux}
    R(Q^{\aux},\mathbf v^{\aux},\mathbf{d}^{\aux})=R(Q^{\rem},\mathbf v,\mathbf{d})\oplus T^*M^\leg,
\end{align}
where $M^\leg$ is the space of linear maps corresponding to the left-pointing arrows in the right-hand-side of \eqref{aux quiv}, in particular $R(Q^{\aux},\mathbf v^{\aux},\mathbf{d}^{\aux})$ is a symplectic vector space. In fact, $Q^{\aux}$ is the doubling $\overline{Q'}$ of a quiver $Q'$.

We extend the $\sA$-action on $R(Q^{\rem},\mathbf v,\mathbf{d})$ to $R(Q^{\aux},\mathbf v^{\aux},\mathbf{d}^{\aux})$ by requiring that $\sA$ acts trivially on $T^*M^\leg$. The gauge group for $(Q^{\aux},\mathbf v^{\aux},\mathbf{d}^{\aux})$ is $G^{\aux}=\prod_{i\in Q^{\aux}_0}\GL(\mathbf{v}^{\aux}_i)$, and its Lie algebra is denoted by $\mathfrak{g}^{\aux}$. The natural $G^{\aux}$-action on $R(Q^{\aux},\mathbf v^{\aux},\mathbf{d}^{\aux})$ is Hamiltonian with moment map
\begin{align*}
    \mu^{\aux}\colon R(Q^{\aux},\mathbf v^{\aux},\mathbf{d}^{\aux})\to (\mathfrak{g}^{\aux})^*.
\end{align*}
Note that $\mathfrak{g}^{\aux}=\mathfrak{g}\oplus \mathfrak{g}^\leg$, where $\mathfrak{g}^\leg$ is the Lie algebra of the gauge group corresponding to the newly-added nodes:
$$G^\leg=\prod_{i\in Q^{\aux}_0\setminus Q_0}\GL(\mathbf{v}^{\aux}_i).$$
Then we can write $\mu^{\aux}$ in components $\mu^{\aux}=(\mu^{\aux}_{\mathrm{res}},\mu^\leg)$, where 
$$\mu^{\aux}_{\mathrm{res}}\colon R(Q^{\aux},\mathbf v^{\aux},\mathbf{d}^{\aux})\to \mathfrak{g}^*, \quad 
\mu^\leg\colon R(Q^{\aux},\mathbf v^{\aux},\mathbf{d}^{\aux})\to (\mathfrak{g}^\leg)^*$$ are the corresponding components.

Define the linear subspaces
\begin{align*}
    \mathfrak{Z}:=(\mathfrak{g}^*)^G\subseteq  \mathfrak{g}^*,\quad \mathfrak{Z}^\leg:=((\mathfrak{g}^\leg)^*)^G\subseteq  (\mathfrak{g}^\leg)^*,\quad \mathfrak{Z}^{\aux}:=\mathfrak{Z}\oplus \mathfrak{Z}^\leg\subseteq  (\mathfrak{g}^\aux)^*.
\end{align*}
We note that $\mathfrak{Z}\cong \bigoplus_{i\in Q_0}\bC\cdot \mathrm{Id}_i$ and $\mathfrak{Z}^\leg\cong \bigoplus_{i\in Q^\aux_0\setminus Q_0}\bC\cdot \mathrm{Id}_i$.

\begin{Definition}\label{map A_n leg to loop}
We define a morphism 
\begin{align}\label{frak p}
\mathfrak{p}\colon R(Q^{\aux},\mathbf v^{\aux},\mathbf{d}^{\aux})\times \bC^{\mathcal E^\add}\to R(Q^{\add},\mathbf v,\mathbf{d})
\end{align}
as follows. In view of decompositions \eqref{rep of Q add} and \eqref{rep of Q aux}, let $\mathfrak{p}$ be identity map on the component $R(Q^{\rem},\mathbf v,\mathbf{d})$, and for each loop $\varepsilon\in \mathcal E^\add$, let $\mathfrak{p}$ maps the corresponding $A_{n-1}$ leg by:
\begin{equation}
\left(\xymatrix{
  \fillcircled{n}    \ar@<.5ex>[r]^{D^\varepsilon_{n-1}}  & \ar@<.5ex>[l]^{C^\varepsilon_{n-1}} \fillcircled{n-1}  \ar@<.5ex>[r]^{D^\varepsilon_{n-2}}  & \ar@<.5ex>[l]^{C^\varepsilon_{n-2}} \cdots  \ar@<.5ex>[r]^{D^\varepsilon_{1}\quad}  & \ar@<.5ex>[l]^{C^\varepsilon_{1}\quad} \fillcircled{1} },\,\,\,  t_\varepsilon\in \bC\right)
 \xymatrix{ 
  \,\,\, \ar@{|->}[r] &  \quad\circled{n} \ar@(dr,ur)_{X_\varepsilon:=C^\varepsilon_{n-1}D^\varepsilon_{n-1}+t_\varepsilon\cdot\id} 
}.
\end{equation}
We also define
\begin{align}\label{frak p bar}
\overline{\mathfrak{p}}:(\mu^\leg)^{-1}(\mathfrak{Z}^\leg)\times \bC^{\mathcal E^\add}\to R(Q^{\add},\mathbf v,\mathbf{d})
\end{align}
to be the restriction of $\mathfrak{p}$ to the subvariety $(\mu^\leg)^{-1}(\mathfrak{Z}^\leg)\times \bC^{\mathcal E^\add}$.
\end{Definition}

We note that $\mathfrak{p}$ is $(G^\aux\times\sA)$-equivariant, where $(G^\aux\times\sA)$ acts $R(Q^{\add},\mathbf v,\mathbf{d})$ via projection to $G\times\sA$.

\begin{Lemma}\label{compare moment maps}
There is an $(G^\aux\times\sA)$-equivariant isomorphism 
$$\mathfrak{p}^{-1}((\mu^\add)^{-1}(0))\cong (\mu^{\aux}_{\mathrm{res}})^{-1}(\mathfrak{Z})\times \mathcal L, $$ 
where 
\begin{equation}\label{equ on vs L}
\mathcal L:=\mathrm{ker}\left(\mathrm{sum}\colon \bC^{\mathcal E^\add}\to \mathfrak{Z} \right)
\end{equation}
 is the $Q_0$-graded linear subspace of $\bC^{\mathcal E^\add}$ defined as the kernel of the following surjective map:
\begin{align*}
\mathrm{sum}\colon  \bC^{\mathcal E^\add}\to \mathfrak{Z}, \quad  (t_\varepsilon)_{\epsilon\in \mathcal E^\add}\mapsto \left(\sum_{h(\varepsilon)=i}t_\varepsilon\right)_{i\in Q_0}.
\end{align*}
\end{Lemma}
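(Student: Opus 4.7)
The plan is to unwind both sides explicitly in terms of the decompositions \eqref{rep of Q add} and \eqref{rep of Q aux}. On the $R(Q^{\rem},\mathbf v,\mathbf{d})$ factor the map $\mathfrak{p}$ is the identity, so I only need to analyze the ``leg vs.\ loop'' components. For a fixed node $i \in Q_0$ and a loop $\varepsilon \in \mathcal E^{\add}$ with $h(\varepsilon)=i$, the standard computation of the cotangent bundle moment map for the $A_{n-1}$ leg shows that the node-$i$ contribution of $\mu^{\aux}_{\mathrm{res}}$ along this leg is $C^\varepsilon_{n-1} D^\varepsilon_{n-1}$. On the other hand, the contribution of the loop $X_\varepsilon$ to $\mu^{\add}$ at node $i$ is $X_\varepsilon$ itself, since $\mu^{\add}=(\mu^{\rem},\mathrm{pr})$ and $\mathrm{pr}$ is the identity on the $\mathfrak{gl}(\mathbf{v}_{h(\varepsilon)})^*$ summand. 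Substituting the definition $X_\varepsilon = C^\varepsilon_{n-1}D^\varepsilon_{n-1} + t_\varepsilon\cdot \mathrm{id}$ from Definition \ref{map A_n leg to loop} and summing over all $\varepsilon$ with $h(\varepsilon)=i$, we obtain
\begin{equation*}
\mathfrak{p}^*\mu^{\add}\big|_i \;=\; \mu^{\aux}_{\mathrm{res}}\big|_i \;+\; \Big(\!\!\sum_{h(\varepsilon)=i}t_\varepsilon\Big)\cdot \mathrm{id}_i,
\end{equation*}
which under the identification $\mathfrak{Z}\cong \bigoplus_{i\in Q_0}\bC\cdot\mathrm{id}_i$ reads $\mathfrak{p}^*\mu^{\add} = \mu^{\aux}_{\mathrm{res}} + \mathrm{sum}(t)$. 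Consequently $\mathfrak{p}^{-1}((\mu^{\add})^{-1}(0))$ is cut out by the single equation $\mu^{\aux}_{\mathrm{res}}(x) = -\mathrm{sum}(t)$; in particular $\mu^{\aux}_{\mathrm{res}}(x) \in \mathfrak{Z}$, so $x \in (\mu^{\aux}_{\mathrm{res}})^{-1}(\mathfrak{Z})$.

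To conclude, note that $\mathrm{sum}\colon \bC^{\mathcal E^{\add}}\to \mathfrak{Z}$ is surjective, since for each $i\in Q_0$ the newly-added loop attached at $i$ gives a coordinate that alone hits $\mathrm{id}_i$. Choose any linear splitting $s\colon \mathfrak{Z}\to \bC^{\mathcal E^{\add}}$ of $\mathrm{sum}$, yielding the direct sum decomposition $\bC^{\mathcal E^{\add}} = \mathcal L \oplus s(\mathfrak{Z})$. Define
\begin{equation*}
\Phi\colon \mathfrak{p}^{-1}\bigl((\mu^{\add})^{-1}(0)\bigr)\longrightarrow (\mu^{\aux}_{\mathrm{res}})^{-1}(\mathfrak{Z})\times \mathcal L, \qquad (x,t)\longmapsto \bigl(x,\, t+s(\mu^{\aux}_{\mathrm{res}}(x))\bigr),
\end{equation*}
with inverse $(x, t_0)\mapsto (x,\, t_0 - s(\mu^{\aux}_{\mathrm{res}}(x)))$, using that $\mathrm{sum}(t+s(\mu^{\aux}_{\mathrm{res}}(x))) = -\mu^{\aux}_{\mathrm{res}}(x) + \mu^{\aux}_{\mathrm{res}}(x) = 0$ on the source. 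For the equivariance, the groups $G^{\aux}$ and $\sA$ both act trivially on $\mathfrak{Z}$ and on $\bC^{\mathcal E^{\add}}$ (the latter because $\sA$ acts trivially on $T^*M^{\leg}$ by construction, and the shift parameters $t_\varepsilon$ have been introduced so that $\mathfrak{p}$ is $\sA$-equivariant), so any linear splitting $s$ is automatically $(G^{\aux}\times\sA)$-equivariant, and $\mu^{\aux}_{\mathrm{res}}$ is $G^{\aux}$-equivariant and $\sA$-invariant. Thus $\Phi$ is the desired $(G^{\aux}\times\sA)$-equivariant isomorphism. The only mildly delicate step is the moment-map computation in the first paragraph; once one is careful with the sign convention in $\mu^{\aux}_{\mathrm{res}}$ for the $A_{n-1}$ leg and with the fact that $\mu^{\add}$ sums contributions from all loops sharing a head, everything else is bookkeeping.
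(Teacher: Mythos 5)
Your proof is correct and follows essentially the same route as the paper: the heart of both arguments is the node-by-node computation showing that $\mathfrak{p}^*\mu^{\add}_i = \mu^{\aux}_{\mathrm{res},i} + \bigl(\sum_{h(\varepsilon)=i}t_\varepsilon\bigr)\cdot\mathrm{id}_i$, which the paper records and then concludes with ``then the lemma follows.'' You have merely made that last step explicit by choosing a linear splitting $s$ of $\mathrm{sum}$ and shifting the $t$-coordinate, and you correctly observe that $G^\aux$ fixes $\mathfrak{Z}=(\mathfrak{g}^*)^G$ so that $\mu^{\aux}_{\mathrm{res}}$ is actually invariant on $(\mu^{\aux}_{\mathrm{res}})^{-1}(\mathfrak{Z})$, making the shifted map equivariant.
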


\begin{proof}
Let us fix a node $i\in Q_0$, then the equation $\mu^\add_i\circ \mathfrak{p}=0$ is equivalent to
\begin{align*}
    \mu^\rem_i+\sum_{\varepsilon\in \mathcal{E}^\add,h(\varepsilon)=i}(C^\varepsilon_{n-1}D^\varepsilon_{n-1}+t_\epsilon\cdot\id)=0.
\end{align*}
Notice that 
\begin{align*}
    \mu^{\aux}_{\mathrm{res},i}=\mu^\rem_i+\sum_{\varepsilon\in \mathcal{E}^\add,h(\varepsilon)=i}C^\varepsilon_{n-1}D^\varepsilon_{n-1},
\end{align*}
thus the equation $\mu^\add_i\circ \mathfrak{p}=0$ can be rewritten as
\begin{align*}
    \mu^{\aux}_{\mathrm{res},i}+\sum_{\varepsilon\in \mathcal{E}^\add,h(\varepsilon)=i}t_\epsilon\cdot\id=0,
\end{align*}
then the lemma follows.
\end{proof}

\begin{Definition}
We say that a representation $x\in R(Q^{\aux},\mathbf v^{\aux},\mathbf{d}^{\aux})$ is \textit{leg-stable} if all linear maps corresponding to the left-pointing arrows in the right-hand-side of \eqref{aux quiv} are injective. The subset of all leg-stable representations is denoted by $R(Q^{\aux},\mathbf v^{\aux},\mathbf{d}^{\aux})^{\leg\emph{-}s}$.\footnote{It is an Zariski open subset of $R(Q^{\aux},\mathbf v^{\aux},\mathbf{d}^{\aux})$.}
\end{Definition}
The motivation for introducing leg stability condition is to state Lemma \ref{T[SU(n)] quiver}. Consider the quiver
\begin{equation*}
Q^n\colon \qquad
\xymatrix{
  \boxed{n}    \ar@<.5ex>[r]^{D_{n-1}}  & \ar@<.5ex>[l]^{C_{n-1}} \fillcircled{n-1}  \ar@<.5ex>[r]^{D_{n-2}}  & \ar@<.5ex>[l]^{C_{n-2}} \cdots  \ar@<.5ex>[r]^{D_{1}\quad}  & \ar@<.5ex>[l]^{C_{1}\quad} \fillcircled{1} 
},
\end{equation*}
where the gauge and framing dimension vectors are $\mathbf v^n$ (with $\mathbf v^n_i=n-i$), and  $\mathbf{d}^n$ (with $\mathbf{d}^n_i=n\cdot \delta_{i,1}$). 
In the following discussions, we take the stability condition $\zeta$ to be $\zeta_i=1$ for every node $i$. Denote the moment map for the above quiver to be $\mu^n$, and denote the space $\mathfrak{Z}$ for the quiver $Q^n$ to be $\mathfrak{Z}^n$. Then a point in $\mu^n(\mathfrak{Z}^n)$ is a $\zeta$-semistable (equivalently, $\zeta$-stable) $Q^n$-representation if and only if $C_i$ are injective for all $1\leqslant i\leqslant n-1$. 

Consider the morphism in Example \ref{extended nak quiv}:
$$\phi\colon \widetilde{\cN}_\zeta(Q^n,\mathbf v^n,\mathbf{d}^n)\to \mathfrak{Z}^n, $$
where $\widetilde{\cN}_\zeta(Q^n,\mathbf v^n,\mathbf{d}^n)$ is the universally deformed Nakajima variety \eqref{equ on univ nak var}.
For $\lambda=(\lambda_1,\cdots,\lambda_{n-1})\in \mathfrak{Z}^n$, $\phi^{-1}(\lambda)$ is the moduli space   
\begin{align*}
    \left\{(\bC^n=V_0\supset V_1\supset \cdots \supset V_{n-1}\supset V_n=0),\, X\in \mathrm{End}(\bC^n)\:\bigg\rvert\: \substack{\text{$V_i$ is a dim $n-i$ linear subspace, and $X(V_0)\subseteq  V_{1}$, and}\\ \text{for $1\leqslant i\leqslant n-1$, $X(V_i)\subseteq  V_{i}$ and $\overline{X}_i\colon V_{i}/V_{i+1}\to V_{i}/V_{i+1}$} \\ \text{ equals to $\sum_{j=1}^i \lambda_j\cdot\id$}} \right\}.
\end{align*}
\begin{Lemma}\label{T[SU(n)] quiver}
Let $\nu\colon \widetilde{\mathfrak{gl}(n)}^*\to \mathfrak{h}^*$ be the morphism for $L=\GL(n)$ in the Grothendieck-Springer resolution \eqref{Grothendieck-Springer resolution}. Then there exists a commutative diagram
\begin{equation*}
\xymatrix{
\widetilde{\mathfrak{gl}(n)}^* \ar[r]^{\nu} \ar[d]_{\cong} & \mathfrak{h}^* \ar[d]^{\cong}\\
\widetilde{\cN}_\zeta(Q^n,\mathbf v^n,\mathbf{d}^n)\times \bC \ar[r]^{\qquad\,\,\, \phi\times\id} & \mathfrak{Z}^n\times \bC\,,
}
\end{equation*}
with vertical arrows being isomorphisms.
\end{Lemma}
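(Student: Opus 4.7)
The plan is to exhibit both sides as parameterizing the same data, namely pairs $(V_\bullet,Y)$, where $V_\bullet$ is a complete flag in $\bC^n$ and $Y\in\End(\bC^n)$ preserves it, together with a parameter $t\in\bC$ recording the eigenvalue of $Y$ on the top quotient $V_0/V_1$. This is the universally-deformed version of the classical identification of the type $A_{n-1}$ Nakajima quiver variety with the Springer resolution, due to Nakajima and Maffei.

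For the Grothendieck-Springer side, the trace pairing identifies $\widetilde{\mathfrak{gl}(n)}^*=\GL(n)\times^B\mathfrak{b}^*$ with $\{(V_\bullet,Y)\,:\,Y(V_i)\subseteq V_i\}$, and $\nu$ records the tuple $(a_1,\ldots,a_n)\in\mathfrak{h}^*\cong\bC^n$ of eigenvalues of $Y$ on the successive graded pieces $V_{i-1}/V_i$. For the quiver side, given a $\zeta$-stable representation $(C_k,D_k)$ with moment map value $\lambda\in\mathfrak{Z}^n$ and $t\in\bC$, the injectivity of the $C_k$'s (from $\zeta$-stability, noted before the statement) gives a flag $V_i:=(C_{n-1}\cdots C_{n-i})(W_{n-i})\subseteq\bC^n$ of codimension $i$; I then put $X':=C_{n-1}D_{n-1}$ and $Y:=X'+t\cdot\id$. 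The main computation is an induction on $i$ showing that $X'$ preserves each $V_i$ and acts on $V_{i-1}/V_i$ as a scalar determined by the $\lambda_k$'s (zero on $V_0/V_1$, and $\lambda_{n-1}+\cdots+\lambda_{n-i+1}$ on $V_{i-1}/V_i$ for $i\geq 2$). The inductive step writes $v\in V_i$ as $C_{n-1}\cdots C_{n-i}(w)$ and repeatedly uses the moment-map relations $D_kC_k=C_{k-1}D_{k-1}+\lambda_k\cdot\id$ (with the convention $C_0D_0=0$) to migrate the $D$ past the chain of $C$'s, producing a correction lying in $V_{i+1}$ together with a scalar multiple of $v$.

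For the inverse, given $(V_\bullet,Y)$ I set $t:=a_1$ and $X':=Y-t\cdot\id$, which lands in $V_1$; then I take the $C_k$'s as tautological inclusions of successive flag pieces and solve recursively for $D_{n-1},\ldots,D_1$ from the factorization $X'=C_{n-1}D_{n-1}$ (unique since $C_{n-1}$ is an inclusion) together with the moment-map relations, the $\zeta$-stability being automatic from the $C_k$ being inclusions. Both constructions are algebraic and mutually inverse. Under the resulting isomorphism $\widetilde{\cN}_\zeta(Q^n,\bv^n,\bd^n)\times\bC\xrightarrow{\sim}\widetilde{\mathfrak{gl}(n)}^*$, the map $\phi\times\id$ corresponds to $\bigl((a_{i+1}-a_i)_{i=1}^{n-1},\,a_1\bigr)$ while $\nu$ records $(a_1,\ldots,a_n)$, so the vertical identification $\mathfrak{h}^*\cong\mathfrak{Z}^n\times\bC$ is the evident affine-linear isomorphism sending $a_1\mapsto t$ and $a_{i+1}-a_i\mapsto\lambda_{n-i}$, which makes the square commute. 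The main obstacle is the inductive eigenvalue calculation on the quiver side; once that is done, the inverse map, commutativity, and naturality in the parameters are all formal.
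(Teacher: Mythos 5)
Your proof is correct and follows essentially the same route as the paper: identify both $\nu^{-1}(r)$ and $\phi^{-1}(\lambda)\times\{t\}$ with moduli of full flags $V_\bullet$ in $\bC^n$ carrying a flag-preserving endomorphism with prescribed scalar action on the graded pieces, then match the parameters by the affine-linear change of coordinates $r_i = t + \sum_{j<i}\lambda_j$. The paper's proof simply invokes the flag-and-endomorphism description of $\phi^{-1}(\lambda)$ stated in the preamble before the lemma and writes down the map $(\{(V_i,X)\},t)\mapsto\{(V_i,X+t\cdot\id)\}$; your proposal additionally supplies the inductive moment-map computation and the inverse construction that justify that description, which the paper treats as known.
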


\begin{proof}
Write $\mathfrak{h}^*=\bC^n$. We observe that $\nu^{-1}(r_1,\cdots,r_n)$ is the moduli space of 
\begin{align*}
    \left\{(\bC^n=V_0\supset V_1\supset \cdots \supset V_{n-1}\supset V_n=0), M\in \mathrm{End}(\bC^n)\:\bigg\rvert\: \substack{\text{$V_i$ is a dim $n-i$ linear subspace, and for $1\leqslant i\leqslant n-1$}\\ \text{$M(V_i)\subseteq  V_{i}$ and $\overline{M}_i\colon V_{i}/V_{i+1}\to V_{i}/V_{i+1}$ equals to $r_{i+1}\cdot\id$} } \right\}.
\end{align*}
We define a linear isomorphism 
$$\mathfrak{Z}^n\times \bC\cong \mathfrak{h}^*$$ 
$$((\lambda_1,\cdots,\lambda_{n-1}), t)\mapsto (r_1,\cdots,r_n), \quad \mathrm{where}\, \,r_i=t+\sum_{j=1}^{i-1}\lambda_j. $$  
Then the desired isomorphism is given by: 
\begin{equation*}
\widetilde{\cN}_\zeta(Q^n,\mathbf v^n,\mathbf{d}^n)\times \bC \cong \widetilde{\mathfrak{gl}(n)}^*, \quad (\{(V_i,X)\},t) \mapsto \{(V_i,X+t\cdot \id)\}. 
\qedhere
\end{equation*}
\end{proof}
Applying Lemma \ref{T[SU(n)] quiver} to every leg in the quiver $Q^\aux$, and we get the following.

\begin{Lemma}\label{Gro-Spr for quiv as legs}
Let $\overline\nu\colon \widetilde{R}(Q^{\add},\mathbf v,\mathbf{d}) \to \mathcal H$ be the composition of the top horizontal arrows in the diagram \eqref{Gro-Spr res for quiv before quot}. Then there exists a commutative diagram of $(G\times \sA)$-equivariant morphism
\begin{equation*}
\xymatrix{
&\ar[dl]_{\pi_R}\widetilde{R}(Q^{\add},\mathbf v,\mathbf{d}) \ar[r]^{\quad\overline\nu} \ar[dd]_{\cong} & \mathcal H \ar[dd]^{\cong}\\
{R}(Q^{\add},\mathbf v,\mathbf{d}) & & \\
&\ar[ul]_{\mathfrak{p}'}\left((\mu^\leg)^{-1}(\mathfrak{Z}^\leg)^{\leg\emph{-}s}/\!\!/ G^\leg \right)\times \bC^{\mathcal E^\leg} \ar[r]^-{\phi\times\id} & \mathfrak{Z}^\leg\times \bC^{\mathcal E^\leg}\,,
}
\end{equation*}
with vertical arrows being isomorphisms. Here $\mathfrak{p}'$ is induced from $\overline{\mathfrak{p}}$ by taking the $G^\leg$ quotient.
\end{Lemma}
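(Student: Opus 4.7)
The plan is to reduce Lemma \ref{Gro-Spr for quiv as legs} to a leg-wise application of Lemma \ref{T[SU(n)] quiver}. First I would recall that by construction
\[
\widetilde{R}(Q^{\add},\mathbf v,\mathbf d) = R(Q^{\rem},\mathbf v,\mathbf d)\times \prod_{\varepsilon\in \mathcal E^{\add}} \widetilde{\mathfrak{gl}(\mathbf v_{h(\varepsilon)})}^*,
\]
and the morphism $\overline{\nu}$ is just the projection onto $\prod_{\varepsilon}\mathfrak h^*_{\mathbf v_{h(\varepsilon)}} = \mathcal H$ coming from the Grothendieck--Springer maps $\nu$ of \eqref{Grothendieck-Springer resolution}. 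Applying the isomorphism $\widetilde{\mathfrak{gl}(n)}^*\cong \widetilde{\cN}_\zeta(Q^n,\mathbf v^n,\mathbf d^n)\times \bC$ of Lemma \ref{T[SU(n)] quiver} to every $\varepsilon\in \mathcal E^{\add}$ (with $n=\mathbf v_{h(\varepsilon)}$) and taking the product with $R(Q^{\rem},\mathbf v,\mathbf d)$ converts the right factor into
\[
R(Q^{\rem},\mathbf v,\mathbf d)\times \prod_{\varepsilon}\widetilde{\cN}_\zeta(Q^{\mathbf v_{h(\varepsilon)}},\cdot,\cdot)\times \bC^{\mathcal E^{\add}}.
\]

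Next I would identify the first two factors with $(\mu^\leg)^{-1}(\mathfrak Z^\leg)^{\leg\text{-}s}/\!\!/G^\leg$. Since $G^\leg$ acts trivially on $R(Q^{\rem},\mathbf v,\mathbf d)$ and $\mu^\leg$ only constrains the newly added leg arrows, one has set-theoretically
\[
(\mu^\leg)^{-1}(\mathfrak Z^\leg) = R(Q^{\rem},\mathbf v,\mathbf d)\times \prod_{\varepsilon}(\mu^{n,\varepsilon})^{-1}(\mathfrak Z^{n,\varepsilon}),
\]
where $\mu^{n,\varepsilon}$ is the moment map for the $\varepsilon$-th leg viewed as a quiver $Q^n$ (treating the $n$-node as framing since its gauge factor is \emph{not} part of $G^\leg$). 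Here the stability $\zeta$ with $\zeta_i=1$ used in Lemma \ref{T[SU(n)] quiver} coincides with leg-stability: on $(\mu^{n,\varepsilon})^{-1}(\mathfrak Z^{n,\varepsilon})$, $\zeta$-(semi)stability is equivalent to injectivity of all left-pointing arrows $C^\varepsilon_i$, by a standard King-stability argument for $A_n$-type Nakajima varieties. Taking the $G^\leg$-quotient leg-by-leg therefore yields
\[
(\mu^\leg)^{-1}(\mathfrak Z^\leg)^{\leg\text{-}s}/\!\!/G^\leg \cong R(Q^{\rem},\mathbf v,\mathbf d)\times \prod_{\varepsilon}\widetilde{\cN}_\zeta(Q^{\mathbf v_{h(\varepsilon)}},\mathbf v^{\mathbf v_{h(\varepsilon)}},\mathbf d^{\mathbf v_{h(\varepsilon)}}),
\]
and the remaining $\bC^{\mathcal E^{\add}}$-factor records the scalars $t_\varepsilon$ of \eqref{frak p}.

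To see the diagram commutes, I would trace the definitions: under the leg-wise isomorphism of Lemma \ref{T[SU(n)] quiver}, the map $\nu\colon \widetilde{\mathfrak{gl}(n)}^*\to \mathfrak h^*$ becomes $\phi\times \id\colon \widetilde{\cN}_\zeta(Q^n,\cdot,\cdot)\times \bC\to \mathfrak Z^n\times \bC$ via the affine change of variables $r_i = t+\sum_{j<i}\lambda_j$. Taking the product over $\varepsilon$ gives the lower-right square of the claimed diagram. For the triangle on the left, I would observe that the composite $\widetilde{R}(Q^{\add},\mathbf v,\mathbf d)\to R(Q^{\add},\mathbf v,\mathbf d)$, read under the leg-wise isomorphism, is precisely the construction of $\overline{\mathfrak p}$ in Definition \ref{map A_n leg to loop} (the formula $X_\varepsilon = C^\varepsilon_{n-1}D^\varepsilon_{n-1}+t_\varepsilon\cdot\id$ is exactly the Grothendieck--Springer projection of $\widetilde{\mathfrak{gl}(n)}^*$ to $\mathfrak{gl}(n)^*$ under Lemma \ref{T[SU(n)] quiver}), so $\pi_R$ is induced from $\overline{\mathfrak p}$ by the $G^\leg$-quotient, giving $\mathfrak p'$. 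All maps in sight are manifestly $(G\times \sA)$-equivariant since Lemma \ref{T[SU(n)] quiver} is equivariant for the framing action on the $\bC^n$ end, $\sA$ acts trivially on $T^*M^\leg$ by construction, and $G$ acts on the leg coordinates only through its action on $\bC^{\mathbf v_{h(\varepsilon)}}$.

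The main obstacle I expect is not the idea but the bookkeeping: one must carefully check that the stability conditions (leg-stability versus the $\zeta_i=1$ King-stability used in Lemma \ref{T[SU(n)] quiver}), the gauge groups ($G^\leg$ is not the full gauge group of $Q^\aux$, and in $Q^n$ the $n$-node is a framing node), and the affine shifts $t_\varepsilon$ align. Once these match precisely, the lemma follows by assembling the leg-wise statement of Lemma \ref{T[SU(n)] quiver} across all $\varepsilon\in \mathcal E^{\add}$ and multiplying by $R(Q^{\rem},\mathbf v,\mathbf d)$.
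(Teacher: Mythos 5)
Your proposal is correct and matches the paper's approach exactly: the paper does not write out a proof but simply asserts ``Applying Lemma \ref{T[SU(n)] quiver} to every leg in the quiver $Q^\aux$, we get the following,'' and your argument is the natural expansion of that assertion, including the correct bookkeeping of the leg-wise identification $\widetilde{\mathfrak{gl}(n)}^*\cong\widetilde{\cN}_\zeta(Q^n,\mathbf v^n,\mathbf d^n)\times\bC$, the equivalence of leg-stability with $\zeta$-stability on the legs, the role of the scalars $t_\varepsilon$, and the identification of $\pi_R$ with $\mathfrak p'$ via the formula $X_\varepsilon=C^\varepsilon_{n-1}D^\varepsilon_{n-1}+t_\varepsilon\cdot\id$.
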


\begin{Definition}
For a stability condition $\xi\in \bQ^{Q_0}$ and a number $\delta\in \bQ$, we define $\xi^\delta\in \bQ^{Q^\aux_0}$ as follows
\begin{align*}
    \xi^\delta_i=\begin{cases}
        \xi_i\,,& \text{ if }i\in Q_0,\\
        \delta\,, & \text{ if }i\in Q^\aux_0\setminus Q_0.
    \end{cases}
\end{align*}
\end{Definition}

\begin{Lemma}\label{compare stabilities}
For every $\xi\in \bQ^{Q_0}$, there exists $\delta>0$ such that the following hold simultaneously
\begin{align}\label{stable loci inclusion}
    \overline{\mathfrak{p}}^{-1}(R(Q^{\add},\mathbf v,\mathbf{d})^{\xi\emph{-}s})\cap \left((\mu^\leg)^{-1}(\mathfrak{Z}^\leg)^{\leg\emph{-}s}\times \bC^{\mathcal E^\leg}\right)\subseteq  (\mu^\leg)^{-1}(\mathfrak{Z}^\leg)^{\xi^\delta\emph{-}s}\times \bC^{\mathcal E^\leg},
\end{align}
\begin{align}\label{semi-stable loci inclusion}
    \overline{\mathfrak{p}}^{-1}(R(Q^{\add},\mathbf v,\mathbf{d})^{\xi\emph{-}ss})\cap \left((\mu^\leg)^{-1}(\mathfrak{Z}^\leg)^{\leg\emph{-}s}\times \bC^{\mathcal E^\leg}\right)\supseteq (\mu^\leg)^{-1}(\mathfrak{Z}^\leg)^{\xi^\delta\emph{-}ss}\times \bC^{\mathcal E^\leg}.
\end{align}
\end{Lemma}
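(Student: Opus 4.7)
The plan is to establish both inclusions \eqref{stable loci inclusion} and \eqref{semi-stable loci inclusion} by relating $Q^\aux$-sub-representations of $x$ to $Q^{\add}$-sub-representations of $\overline{\mathfrak p}(x)$ and then comparing King-type weights. The argument naturally splits into three steps.

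First, I would record the basic correspondence of sub-representations. For any $Q^\aux$-sub-representation $S$ of $x$, the restriction $T:=S|_{Q_0}$ is automatically a $Q^{\add}$-sub-representation of $\overline{\mathfrak p}(x)$: the endomorphism $X_\varepsilon=C^\varepsilon_{n-1}D^\varepsilon_{n-1}+t_\varepsilon\cdot\id$ preserves $T_{h(\varepsilon)}$ as soon as $S$ is closed under $C^\varepsilon_{n-1}$ and $D^\varepsilon_{n-1}$. In the other direction, any $Q^{\add}$-sub-representation $T$ of $\overline{\mathfrak p}(x)$ admits a canonical minimal lift $\widetilde T\subseteq  V^\aux$ to a $Q^\aux$-sub-representation of $x$, obtained by propagating $T$ down each leg via $D^\varepsilon_j$ and closing under $C^\varepsilon_j$.

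Second, I would argue that $\xi^\delta$-semistability forces leg-stability for any $\delta>0$. Suppose some $C^\varepsilon_j$ is not injective. Starting from $\ker C^\varepsilon_j\neq 0$ and propagating down the leg of $\varepsilon$, one constructs a nonzero $Q^\aux$-sub-representation $S$ supported entirely on the leg vertices, so $S|_{Q_0}=0$. Its King weight with respect to $\xi^\delta$ is $\delta\cdot n^\leg(S)>0$, where $n^\leg(S):=\sum_k\dim S^\leg_k$, which contradicts $\xi^\delta$-semistability (for the convention in which semistability requires the weight to be $\leqslant 0$; swap signs otherwise). So the leg-stability condition on the left-hand sides of \eqref{stable loci inclusion} and \eqref{semi-stable loci inclusion} is automatic on the right-hand side, and the target inclusions reduce to comparisons between $\xi$-stability of $\overline{\mathfrak p}(x)$ and $\xi^\delta$-stability of $x$.

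Third, I would translate the stability comparison. For any $Q^\aux$-sub-rep $S$ of a leg-stable $x$ with $T:=S|_{Q_0}$, one has
\begin{equation*}
\sum_{i\in Q^\aux_0}\xi^\delta_i\dim S_i \;=\; \sum_{i\in Q_0}\xi_i\dim T_i+\delta\cdot n^\leg(S).
\end{equation*}
For \eqref{semi-stable loci inclusion}: starting from $\xi^\delta$-semistable $x$ and any $Q^{\add}$-sub-rep $T$ of $\overline{\mathfrak p}(x)$, the lift $\widetilde T$ satisfies $\sum\xi^\delta_i\dim\widetilde T_i\leqslant 0$, and since $\delta\cdot n^\leg(\widetilde T)\geqslant 0$, we deduce $\sum\xi_i\dim T_i\leqslant 0$, i.e.\ $\overline{\mathfrak p}(x)$ is $\xi$-semistable. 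For \eqref{stable loci inclusion}: starting from $\xi$-stable $\overline{\mathfrak p}(x)$ and leg-stable $x$, sub-reps $S$ of $x$ with $T=0$ are zero by the second step, while for $T\neq 0$ the strict inequality $\sum\xi_i\dim T_i<0$ survives after adding $\delta\cdot n^\leg(S)$ provided $\delta$ is strictly smaller than the minimum absolute value of the nonzero $\xi$-weights over the finitely many possible sub-dimension vectors.

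The main obstacle is picking a single $\delta$ meeting two simultaneous requirements: it must be strictly positive (to enforce leg-stability in step two) and strictly smaller than the minimum $\xi$-weight gap (to propagate strict stability in step three). Both requirements are compatible because sub-dimension vectors lie in the finite set bounded by $\mathbf v^\aux$, so the gap is a positive number; a further subtlety is handling the framing carefully in the King stability conventions, in particular verifying that the lifts $\widetilde T$ respect the framing containment condition, but this follows directly from the minimality of the lift.
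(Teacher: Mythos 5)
Your three steps track the paper's proof quite closely: the minimal lift $\widetilde T$ you describe is the same object the paper constructs by propagating a subspace along shortest paths on the legs, and both arguments reduce to a King-weight comparison after first establishing leg-stability. As a side remark, your verification that the lift is genuinely a sub-representation with $\widetilde T|_{Q_0}=T$ (and likewise the sub-representation you build in step two) secretly uses the moment map relations $\mu^\leg\in\mathfrak{Z}^\leg$ along the legs; this is implicit in your sketch and should be spelled out.

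The genuine gap is in step 3, and it is precisely the ``framing subtlety'' you flag and then dismiss. The identity $\sum_{i\in Q^\aux_0}\xi^\delta_i\dim S_i=\sum_{i\in Q_0}\xi_i\dim T_i+\delta\cdot n^\leg(S)$ is a correct algebraic statement, but it is not the King weight that governs stability of a \emph{framed} representation. Stability depends on how a subspace interacts with the framing, and the clean way to encode this is the Crawley-Boevey trick the paper invokes: add a node $\infty$ with $\bv_\infty=1$, and set $\xi_{\bullet,\infty}=-\xi\cdot\bv$, $\xi^\delta_{\bullet,\infty}=-\xi^\delta\cdot\bv^\aux$. Because $\bv^\aux\neq\bv$, these two constants differ, and the honest weight comparison carries an extra term:
\begin{equation*}
\sum_{i\in Q^\aux_{\bullet,0}}\xi^\delta_{\bullet,i}\dim S_i \;=\; \sum_{i\in Q^\add_{\bullet,0}}\xi_{\bullet,i}\dim T_i \;+\; \delta\cdot n^\leg(S)\;-\;\dim S_\infty\cdot\sum_{i\in Q^\aux_0\setminus Q_0}\delta\,\bv^\aux_i\,.
\end{equation*}
The last summand is absent from your formula, and it has the wrong sign for \eqref{semi-stable loci inclusion} when $\dim S_\infty=1$ (i.e.\ the subspace contains the image of the in-framing): from $\sum\xi^\delta_{\bullet,i}\dim\widetilde T_i\leqslant 0$ you then only obtain $\sum\xi_{\bullet,i}\dim T_i\leqslant\sum_{i\in Q^\aux_0\setminus Q_0}\delta\bv^\aux_i$, not $\leqslant 0$. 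This is exactly why the paper imposes the quantitative condition \eqref{condition on delta}, $\sum_{i\in Q^\aux_0\setminus Q_0}\delta\bv^\aux_i<\tfrac12$: after clearing denominators so $\xi\in\bZ^{Q_0}$, the total correction is bounded strictly below the integer gap and the desired inequalities close by integrality. Your proposed condition, that $\delta$ be smaller than the minimum nonzero $\xi$-weight, does not control $\delta\cdot\sum_{i\in Q^\aux_0\setminus Q_0}\bv^\aux_i$ and therefore is not sufficient; the total leg dimension $\sum_{i\in Q^\aux_0\setminus Q_0}\bv^\aux_i$ must appear in the bound on $\delta$. Handling the framing contribution is the technical heart of the lemma, not a detail that can be waved away by minimality of the lift.
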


\begin{proof}
Without loss of generality, let us assume in the below that $\xi\in \bZ^{Q_0}$. A preliminary observation is that for any $\delta>0$ we have inclusion
\begin{align*}
    (\mu^\leg)^{-1}(\mathfrak{Z}^\leg)^{\leg\emph{-}s}\supseteq (\mu^\leg)^{-1}(\mathfrak{Z}^\leg)^{\xi^\delta\emph{-}ss}.
\end{align*}
To proceed, let us recall Crawley-Boevey's trick \cite[\S 1,~Remarks~added~in~April~2000]{CB}. We add a new node $\infty$ to $Q^\add$ and $\mathbf{d}_i$ arrows from $\infty$ to $i\in Q_0$ and $\mathbf{d}_i$ arrows from $i$ to $\infty$; and do the same for $Q^\aux$. We denote the resulting unframed quivers by $Q^\add_{\bullet}$ and $Q^\aux_{\bullet}$. Then we have 
\begin{align}\label{CB trick}
    R(Q^{\add},\mathbf v,\mathbf{d})\cong \mathrm{Rep}(Q^\add_{\bullet},\mathbf v_\bullet),\quad R(Q^{\aux},\mathbf v^{\aux},\mathbf{d}^{\aux})\cong \mathrm{Rep}(Q^{\aux}_{\bullet},\mathbf v^{\aux}_\bullet),
\end{align}
where the restriction of $\mathbf v_\bullet$ to $Q_0$ is $\mathbf v$ and $\mathbf v_{\bullet,\infty}=1$, and restriction of $\mathbf v^{\aux}_\bullet$ to $Q^{\aux}_0$ is $\mathbf v^{\aux}$ and $\mathbf v^{\aux}_{\bullet,\infty}=1$. Moreover, the isomorphisms \eqref{CB trick} respect the stability conditions in the following sense:
\begin{align*}
    R(Q^{\add},\mathbf v,\mathbf{d})^{\xi\emph{-}s}\cong \mathrm{Rep}(Q^\add_{\bullet},\mathbf v_\bullet)^{\xi_\bullet\emph{-}s}&,\quad R(Q^{\aux},\mathbf v^{\aux},\mathbf{d}^{\aux})^{\xi^\delta\emph{-}s}\cong \mathrm{Rep}(Q^{\aux}_{\bullet},\mathbf v^{\aux}_\bullet)^{\xi^\delta_\bullet\emph{-}s},\\
    R(Q^{\add},\mathbf v,\mathbf{d})^{\xi\emph{-}ss}\cong \mathrm{Rep}(Q^\add_{\bullet},\mathbf v_\bullet)^{\xi_\bullet\emph{-}ss}&,\quad R(Q^{\aux},\mathbf v^{\aux},\mathbf{d}^{\aux})^{\xi^\delta\emph{-}ss}\cong \mathrm{Rep}(Q^{\aux}_{\bullet},\mathbf v^{\aux}_\bullet)^{\xi^\delta_\bullet\emph{-}ss},
\end{align*}
where the restriction of $\xi_\bullet$ to $Q_0$ is $\xi$ and $\xi_{\bullet,\infty}=-\sum_{i\in Q_0}\xi_i\mathbf v_i$, and restriction of $\xi^{\delta}_\bullet$ to $Q^{\aux}_0$ is $\xi^{\delta}$ and $\xi^{\delta}_{\bullet,\infty}=-\sum_{i\in Q^{\aux}_0}\xi^{\delta}_i\mathbf v^\aux_i$. By abuse the notations in below, let $\mathfrak{p}$ be a morphism from $\mathrm{Rep}(Q^{\aux}_{\bullet},\mathbf v^{\aux}_\bullet)$ to $\mathrm{Rep}(Q^\add_{\bullet},\mathbf v_\bullet)$.

Fix a $\delta\in \bQ_{>0}$ such that
\begin{align}\label{condition on delta}
    \sum_{i\in Q^\aux_0\setminus Q_0}\delta\cdot \mathbf v^\aux_i<\frac{1}{2}.
\end{align}
We claim that \eqref{stable loci inclusion} and \eqref{semi-stable loci inclusion} hold. To prove them, we introduce notations: 
$(V_i)_{i\in Q^\aux_{\bullet,0}}$ be the $Q^\aux_{\bullet,0}$-graded vector space which is a $Q^\aux_{\bullet}$-representation. It restricts to a $Q^\add_{\bullet}$-representation $(V_i)_{i\in Q^\add_{\bullet,0}}$.

Let us first prove \eqref{semi-stable loci inclusion}. Take an arbitrary point $x\in (\mu^\leg)^{-1}(\mathfrak{Z}^\leg)^{\xi^\delta\emph{-}ss}$, and take arbitrary $(t_\varepsilon\in \bC)_{\varepsilon\in \mathcal E^\add}$. Suppose that the $Q^\add_\bullet$-representation $\overline{\mathfrak{p}}(x,(t_\varepsilon))$ is not $\xi_\bullet$-semistable, i.e. there exists a proper $Q^\add_{\bullet,0}$-graded subspace $(S_i\subseteq  V_i)_{i\in Q^\add_{\bullet,0}}$ which is preserved by all maps in $\overline{\mathfrak{p}}(x,(t_\varepsilon))$ and
\begin{align*}
    \sum_{i\in Q^\add_{\bullet,0}}\xi_{\bullet,i}\dim S_i>0.
\end{align*}
Since $\xi_{\bullet,i}\in \bZ$, we must have
\begin{align}\label{ss implies lower bound}
    \sum_{i\in Q^\add_{\bullet,0}}\xi_{\bullet,i}\dim S_i\geqslant 1.
\end{align}
Then we extend $(S_i\subseteq  V_i)_{i\in Q^\add_{\bullet,0}}$ to a $Q^\aux_{\bullet,0}$-graded subspace $(S_i\subseteq  V_i)_{i\in Q^\aux_{\bullet,0}}$ by setting $S_j$ for $j\notin Q^\add_{\bullet,0}$ to be the image of $S_{i_0}$ along a shortest path from $i_0$ to $j$ where $i_0$ is the unique node in $Q^\add_{\bullet,0}$ with the shortest distance to $j$. It is easy to see that $(S_i\subseteq  V_i)_{i\in Q^\aux_{\bullet,0}}$ is preserved by all maps in the representation $x$. By the $\xi^\delta_\bullet$-semistability of $x$, we have
\begin{align*}
    \sum_{i\in Q^\aux_{\bullet,0}}\xi^\delta_{\bullet,i}\dim S_i\leqslant 0.
\end{align*}
However, we have
\begin{align*}
    \sum_{i\in Q^\aux_{\bullet,0}}\xi^\delta_{\bullet,i}\dim S_i&=\sum_{i\in Q_{0}}\xi_{i}\dim S_i+ \sum_{i\in Q^\aux_0\setminus Q_{0}}\delta\cdot\dim S_i+\xi^\delta_{\bullet,\infty}\dim S_\infty\\
    &=\sum_{i\in Q^\add_{\bullet,0}}\xi_{\bullet,i}\dim S_i+\sum_{i\in Q^\aux_0\setminus Q_{0}}\delta\cdot \dim S_i+(\xi^\delta_{\bullet,\infty}-\xi_{\bullet,\infty})\dim S_\infty\\
    &=\sum_{i\in Q^\add_{\bullet,0}}\xi_{\bullet,i}\dim S_i+\sum_{i\in Q^\aux_0\setminus Q_{0}}\delta\cdot \dim S_i-\sum_{i\in Q^\aux_0\setminus Q_0}\dim S_\infty\cdot\delta\cdot \mathbf v^\aux_i\\
    \text{\tiny (by \eqref{condition on delta} and \eqref{ss implies lower bound})}\quad &>\frac{1}{2}.
\end{align*}
We get a contradiction, so $\overline{\mathfrak{p}}(x,(t_\varepsilon))$ must be $\xi_\bullet$-semistable. This proves \eqref{semi-stable loci inclusion}.

To prove \eqref{stable loci inclusion}. Take an arbitrary point $y\in (\mu^\leg)^{-1}(\mathfrak{Z}^\leg)$ and arbitrary $(t_\varepsilon\in \bC)_{\varepsilon\in \mathcal E^\add}$. Suppose that the $Q^\add_\bullet$-representation $\overline{\mathfrak{p}}(y,(t_\varepsilon))$ is $\xi_\bullet$-stable,~i.e.~$(y,(t_\varepsilon))$ belongs to the left-and-side of \eqref{stable loci inclusion}. Assume $y$ is not $\xi^\delta_\bullet$-stable, then there exists a proper $Q^\aux_{\bullet,0}$-graded subspace $(T_i\subseteq  V_i)_{i\in Q^\aux_{\bullet,0}}$ which is preserved by all linear maps in $y$ and 
\begin{align}\label{not s implies lower bound}
    \sum_{i\in Q^\aux_{\bullet,0}}\xi^\delta_{\bullet,i}\dim T_i\geqslant 0.
\end{align}
The $Q^\add_{\bullet,0}$-graded subspace $(T_i\subseteq  V_i)_{i\in Q^\add_{\bullet,0}}$ is preserved by all maps in $\overline{\mathfrak{p}}(y,(t_\varepsilon))$. There are three possibilities: 1) $T_i= V_i$ for all $i\in Q^\add_{\bullet,0}$; 2) $T_i= 0$ for all $i\in Q^\add_{\bullet,0}$; 3) $(T_i\subseteq  V_i)_{i\in Q^\add_{\bullet,0}}$ is a proper subspace. In case 1), we have
\begin{align*}
    \sum_{i\in Q^\aux_{\bullet,0}}\xi^\delta_{\bullet,i}\dim T_i&=-\sum_{i\in Q^\aux_{\bullet,0}}\xi^\delta_{\bullet,i}\dim V_i/T_i+\sum_{i\in Q^\aux_{\bullet,0}}\xi^\delta_{\bullet,i}\dim V_i\\
    &=-\sum_{i\in Q^\aux_{0}\setminus Q_0}\delta\cdot \dim V_i/T_i<0,
\end{align*}
which contradicts \eqref{not s implies lower bound}. In case 2), there must be a node $j\in Q^\aux_{0}\setminus Q_0$ such that $T_j\neq 0$. Let $i_0$ be the unique node in $Q_{0}$ with the shortest distance to $j$, then the linear map corresponding to the shortest path from $j$ to $i_0$ is injective by the leg-stability. Since $(T_i\subseteq  V_i)_{i\in Q^\aux_{\bullet,0}}$ is preserved by all linear maps in $y$, this implies that $T_{i_0}\neq 0$, so case 2) can not happen either. In case 3), the $\xi_\bullet$-stability of $\overline{\mathfrak{p}}(y,(t_\varepsilon))$ implies that
\begin{align*}
    \sum_{i\in Q^\add_{\bullet,0}}\xi_{\bullet,i}\dim T_i<0.
\end{align*}
Since $\xi_{\bullet,i}\in \bZ$, we must have
\begin{align}\label{s implies upper bound}
    \sum_{i\in Q^\add_{\bullet,0}}\xi_{\bullet,i}\dim T_i\leqslant -1.
\end{align}
Then we get the following upper bound of the left-hand-side of \eqref{not s implies lower bound}:
\begin{align*}
    \sum_{i\in Q^\aux_{\bullet,0}}\xi^\delta_{\bullet,i}\dim T_i&=\sum_{i\in Q^\add_{\bullet,0}}\xi_{\bullet,i}\dim T_i+\sum_{i\in Q^\aux_0\setminus Q_{0}}\delta\cdot \dim T_i-\sum_{i\in Q^\aux_0\setminus Q_0}\dim T_\infty\cdot\delta\cdot \mathbf v^\aux_i\\
    \text{\tiny (by \eqref{condition on delta} and \eqref{s implies upper bound})}\quad &<-\frac{1}{2}.
\end{align*}
We get a contradiction, so $y$ must be $\xi^\delta_\bullet$-semistable. This proves \eqref{stable loci inclusion}.
\end{proof}

Recall that we have fixed a $\theta\in \bQ^{Q_0}$ which is assumed to be generic, i.e. 
$$R(Q,\mathbf v,\mathbf{d})^{\theta\emph{-}ss}=R(Q,\mathbf v,\mathbf{d})^{\theta\emph{-}s}. $$ 
Combine Lemma \ref{compare moment maps} and Lemma \ref{compare stabilities}, we get the following equality
\begin{align}\label{stability compatible}
    \overline{\mathfrak{p}}^{-1}\left(\left((\mu^\add)^{-1}(0)^{\theta \emph{-}s}\right)\right)\cap \left((\mu^\leg)^{-1}(\mathfrak{Z}^\leg)^{\leg \emph{-}s}\times \bC^{\mathcal E^\leg}\right)=(\mu^\aux)^{-1}(\mathfrak{Z}^\aux)^{\theta^\delta \emph{-}s}\times \mathcal L=(\mu^\aux)^{-1}(\mathfrak{Z}^\aux)^{\theta^\delta\emph{-}ss}\times \mathcal L,
\end{align}
for sufficiently small positive $\delta$. Now we combine Lemma \ref{Gro-Spr for quiv as legs} with \eqref{stability compatible}, and get the following.

\begin{Proposition}\label{X tilde as Nakajima quiv}
For sufficiently small positive $\delta$, there exists a commutative diagram of $\sA$-equivariant morphisms:
\begin{equation}\label{eq: X tilde as Nakajima quiv}
\xymatrix{
\widetilde{X} \ar[r]^{\widetilde{\tau}} \ar[d]_{\cong} & \mathcal{H} \ar[d]^{\cong}\\
\widetilde{\cN}_{\theta^\delta}(Q^\aux,\mathbf v^\aux,\mathbf{d}^\aux) \times\mathcal L\ar[r]^{\qquad\quad\,\,\,\,\phi\times\id} & \mathfrak{Z}^\aux\times
\mathcal L,
}
\end{equation}
where $\mathcal L$ is a vector space given in \eqref{equ on vs L}. 
In particular, $\widetilde{X}$ is smooth and $\widetilde{\tau}$ in \eqref{Gro-Spr res for quiv} is a smooth morphism.
\end{Proposition}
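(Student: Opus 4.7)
The plan is to assemble the desired isomorphism by combining the three ingredients already set up: Lemma \ref{compare moment maps}, Lemma \ref{compare stabilities}, and Lemma \ref{Gro-Spr for quiv as legs}, whose outputs for sufficiently small $\delta>0$ are packaged in the identity \eqref{stability compatible}. Roughly speaking, the strategy is to rewrite $\widetilde X$ ``leg by leg'' via Grothendieck-Springer, land on the auxiliary quiver $Q^\aux$, and then recognize the result as a universally deformed Nakajima variety times the linear factor $\mathcal L$ cut out in Lemma \ref{compare moment maps}.

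First I would use Lemma \ref{Gro-Spr for quiv as legs} to identify
\[
\widetilde R(Q^\add,\mathbf v,\mathbf d) \;\cong\; \bigl((\mu^\leg)^{-1}(\mathfrak Z^\leg)^{\leg\text{-}s}/\!\!/ G^\leg\bigr)\times \bC^{\mathcal E^\leg},
\]
under which the composition $\overline\nu$ becomes $\phi\times \id$ with target $\mathfrak Z^\leg\times \bC^{\mathcal E^\leg}\cong \mathcal H$, and $\pi_R$ is induced from $\overline{\mathfrak p}$ by taking the $G^\leg$-quotient on the source.

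Next I would pull the $\theta$-stable locus inside $(\mu^\add)^{-1}(0)$ back along $\overline{\mathfrak p}$ and intersect with the $\leg$-stable locus. By \eqref{stability compatible}, for sufficiently small positive $\delta$ this intersection equals
\[
(\mu^\aux)^{-1}(\mathfrak Z^\aux)^{\theta^\delta\text{-}ss}\times \mathcal L \;=\; (\mu^\aux)^{-1}(\mathfrak Z^\aux)^{\theta^\delta\text{-}s}\times \mathcal L,
\]
with semistability coinciding with stability on the auxiliary side. Passing to the $G^\leg$-quotient identifies the result with $\pi_R^{-1}\bigl((\mu^\add)^{-1}(0)^{\theta\text{-}s}\bigr)$. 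Taking the remaining $G$-quotient and using $G^\aux = G\times G^\leg$ together with freeness of the action on the stable locus then yields
\[
\widetilde X \;\cong\; \widetilde{\cN}_{\theta^\delta}(Q^\aux,\mathbf v^\aux,\mathbf d^\aux)\times \mathcal L,
\]
and compatibility with the base map to $\mathcal H$ is inherited from the commutative diagram of Lemma \ref{Gro-Spr for quiv as legs}. Smoothness of $\widetilde X$ and of $\widetilde\tau$ follows from Example \ref{extended nak quiv} applied to the universally deformed Nakajima factor, while the $\mathcal L$-factor is affine space.

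The essential content lives entirely in Lemma \ref{compare stabilities}: one needs to match $\theta$-(semi)stability on $Q^\add$ with $\theta^\delta$-(semi)stability on the $\leg$-stable locus of $Q^\aux$, simultaneously on both stable and semistable loci for small $\delta$. Once that combinatorial comparison is in hand, the rest of the proposition is formal GIT bookkeeping that assembles the above three identifications into the Cartesian square \eqref{eq: X tilde as Nakajima quiv}.
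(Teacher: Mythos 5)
Your proposal follows essentially the same route as the paper: the paper simply states that the proposition is obtained by combining Lemma~\ref{compare moment maps}, Lemma~\ref{compare stabilities}, and Lemma~\ref{Gro-Spr for quiv as legs} (the first two packaged into \eqref{stability compatible}), and your write-up is precisely a careful expansion of that one-line combination, including the observations that the semistable and stable loci coincide for small $\delta$ and that the $G^\leg$- and then $G$-quotients assemble to the $G^\aux$-quotient. The only cosmetic slip is calling \eqref{eq: X tilde as Nakajima quiv} a ``Cartesian square'' at the end; it is merely a commutative square whose verticals happen to be isomorphisms, which is what you actually prove.
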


\subsection{More properties on diagram \texorpdfstring{\eqref{Gro-Spr res for quiv}}{Gro-Spr}}\label{sec:more on Gro-Spr res for quiv}

Consider the commutative diagram \eqref{Gro-Spr res for quiv}:
\begin{equation*}
\xymatrix{
\widetilde{X} \ar[r]^{\widetilde{\tau}} \ar[d]_{\pi_X} & \mathcal{H} \ar[d]^{p}\\
X \ar[r]^{\tau\,\,} & \mathcal{H}/\Gamma\,,
}
\end{equation*}
and the isomorphism \eqref{eq: X tilde as Nakajima quiv}, let us define a $\Gamma$-invariant open subset in $\mathcal H$ as follows
\begin{align}\label{H circ}
\mathcal H^\circ:=\bigcap_{\gamma\in \Gamma}\gamma\left((\mathfrak{Z}^\aux)^\circ\times \mathcal L\right)
\end{align}
where $(\mathfrak{Z}^\aux)^\circ$ is the open subset in \eqref{open part} for the auxiliary quiver $Q^{\aux}$. Let 
$$X^\circ:=\tau^{-1}(\mathcal H^\circ/\Gamma), \quad \widetilde{X}^\circ:=\widetilde{\tau}^{-1}(\mathcal H^\circ), $$ 
then we have $\widetilde{X}^\circ=\pi_X^{-1}(X^\circ)$.

\begin{Lemma}\label{open part Cartesian}
The following diagram is Cartesian
\begin{equation}\label{Gro-Spr res for quiv_open part}
\xymatrix{
\widetilde{X}^\circ \ar[r]^{\widetilde{\tau}} \ar[d]_{\pi_X} & \mathcal{H}^\circ \ar[d]^{p}\\
X^\circ \ar[r]^{\tau\quad} & \mathcal{H}^\circ/\Gamma\,.
}
\end{equation}
\end{Lemma}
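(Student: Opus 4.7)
The plan is to deduce the Cartesian property from the classical fact that the Grothendieck--Springer diagram is Cartesian over the regular semisimple locus, and then descend through the construction of $\widetilde{X}^\circ$ and $X^\circ$. Concretely, I will work with the diagram \eqref{Gro-Spr res for quiv before quot} before taking GIT quotients. The left square there is Cartesian by construction, and the right square is an instance of \eqref{Grothendieck-Springer resolution} for the reductive Lie algebra $\mathfrak{l} = \bigoplus_{\varepsilon \in \mathcal{E}^\add} \mathfrak{gl}(\bv_{h(\varepsilon)})$; it is classical that, restricted to the regular semisimple locus $\mathfrak{h}^*_{\mathrm{rs}} \subseteq \mathfrak{h}^* = \mathcal{H}$, this right square becomes Cartesian, since both $p$ and $\pi_{\mathfrak{l}}$ then become \'etale $\Gamma$-torsors. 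Hence the outer rectangle of \eqref{Gro-Spr res for quiv before quot} is Cartesian over $\mathfrak{h}^*_{\mathrm{rs}}$.

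The crucial step is to verify the inclusion $\mathcal{H}^\circ \subseteq \mathfrak{h}^*_{\mathrm{rs}}$. By $\Gamma$-invariance of both sides this reduces to showing $(\mathfrak{Z}^\aux)^\circ \times \mathcal{L} \subseteq \mathfrak{h}^*_{\mathrm{rs}}$ under the identification $\mathcal{H} \cong \mathfrak{Z}^\aux \oplus \mathcal{L}$ of Proposition \ref{X tilde as Nakajima quiv}. Using Lemma \ref{T[SU(n)] quiver}, for each loop $\varepsilon \in \mathcal{E}^\add$ with $n = \bv_{h(\varepsilon)}$, the eigenvalues of the associated $\mathfrak{gl}(n)$-factor take the form $r^\varepsilon_i = t_\varepsilon + \sum_{j < i} \lambda^\varepsilon_j$, where the $\lambda^\varepsilon_j$ are coordinates on the $\mathfrak{Z}^\leg$-summand of $\mathfrak{Z}^\aux$ coming from the $A_{n-1}$-leg that replaces $\varepsilon$. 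An eigenvalue coincidence $r^\varepsilon_i = r^\varepsilon_j$ with $i<j$ is therefore cut out by $\sum_{k=i}^{j-1} \lambda^\varepsilon_k = 0$, which is the hyperplane $H_\mathbf{u}$ for the $\{0,1\}$-valued $\mathbf{u} \in \bN^{Q^\aux_0}$ supported on the leg positions $\{i,i+1,\ldots,j-1\}$. Since $\mathbf{u}_k \leq 1 \leq \bv^\aux_k$ at every such node, this $H_\mathbf{u}$ is among those removed in the definition \eqref{open part} of $(\mathfrak{Z}^\aux)^\circ$. Running over all $\varepsilon$ and all pairs $i<j$ yields the desired inclusion.

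With these ingredients, the outer rectangle of \eqref{Gro-Spr res for quiv before quot} restricted over $\mathcal{H}^\circ$ on top and $\mathcal{H}^\circ/\Gamma$ on the bottom is Cartesian. I will then intersect both sides with the closed subvariety $R(Q,\bv,\bd) \cong (\mu^\add)^{-1}(0) \subseteq R(Q^\add,\bv,\bd)$ and restrict to the $\theta$-stable open locus; both operations preserve Cartesian-ness, since they are performed simultaneously on both sides via base change. Finally, because $G$ acts freely on the $\theta$-stable locus of $R(Q,\bv,\bd)$ and trivially on $\mathcal{H}^\circ$ and $\mathcal{H}^\circ/\Gamma$, passing to $G$-quotients commutes with the fiber product, producing the desired Cartesian diagram \eqref{Gro-Spr res for quiv_open part}.

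The main obstacle is the second paragraph: one has to unpack the identification $\mathcal{H} \cong \mathfrak{Z}^\aux \oplus \mathcal{L}$ coming from combining Lemma \ref{compare moment maps}, Lemma \ref{Gro-Spr for quiv as legs}, and Proposition \ref{X tilde as Nakajima quiv}, and then explicitly match the Weyl arrangement hyperplanes in $\mathcal{H}$ with specific hyperplanes $H_\mathbf{u}$ on $\mathfrak{Z}^\aux$. Once this dictionary is in place, the remaining steps are formal consequences of the compatibility of Cartesian squares with closed immersions, open immersions, and free GIT quotients.
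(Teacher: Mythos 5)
Your proof is correct and follows essentially the same route as the paper's: both identify $\mathcal{H}^\circ$ as sitting inside the regular semisimple locus $\prod_\varepsilon \mathring{\mathfrak{h}}^*_\varepsilon$ (the paper observes that the removed hyperplanes $H_\mathbf{u}$ with $\mathbf{u}$ supported on a single $A_{n-1}$-leg include all coroot hyperplanes, which is exactly your eigenvalue-coincidence computation), then invoke the classical fact that the Grothendieck--Springer square is Cartesian after base change to $\mathring{\mathfrak{h}}^*/W$, and finally descend through the moment-map constraint, the stable locus, and the free $G$-quotient.
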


\begin{proof}
Under the isomorphism $\mathfrak{h}^*\cong \mathfrak{Z}^n\times \bC$ in Lemma \ref{T[SU(n)] quiver}, the open subset $(\mathfrak{Z}^n)^\circ\times \bC$ is isomorphic to 
\begin{align*}
    \mathfrak{h}^*{\Bigg\backslash}\bigcup_{\substack{\mathbf u\in \bZ^{n-1}\setminus \{0\}\\ 0\leqslant \mathbf u_i\leqslant i}}  \left(\sum_{i=1}^{n-1}\mathbf u_i\check{\alpha}_i\right)^\perp,
\end{align*}
where $\check{\alpha}_i$ is the $i$-th simple coroot, and for $\lambda\in \mathfrak{h}$, $\lambda^\perp$ is the hyperplane $\{v\in \mathfrak{h}^*\colon \lambda(v)=0\}$. In particular, $(\mathfrak{Z}^n)^\circ\times \bC$ is contained in the complement of the union of coroot hyperplanes: $$(\mathfrak{Z}^n)^\circ\times \bC\subset \mathring{\mathfrak{h}}^*:=\mathfrak{h}^*{\Big\backslash}\bigcup_{\check{\alpha}\in\text{coroots}}  \check{\alpha}^\perp.$$
It follows that $\mathcal H^\circ\subset \prod_{\varepsilon\in \mathcal E^{\add}}\mathring{\mathfrak{h}}^*_{\varepsilon}$, where $\mathring{\mathfrak{h}}^*_{\varepsilon}$ is the complement of the union of coroot hyperplanes in the dual of the Cartan subalgebra of $\mathfrak{gl}(\bv_{h(\varepsilon)})$.

It is well-known that the commutative diagram \eqref{Grothendieck-Springer resolution} becomes Cartesian after base change to $\mathring{\mathfrak{h}}^*/W$, precisely, the following diagram is Cartesian:
\begin{equation*}
\xymatrix{
\nu^{-1}(\mathring{\mathfrak{h}}^*) \ar[r]^-{\nu_{\mathfrak{l}}} \ar[d]_{\pi_{\mathfrak{l}}} & \mathring{\mathfrak{h}}^* \ar[d]^{p_{\mathfrak{l}}}\\
\rho^{-1}(\mathring{\mathfrak{h}}^*/W) \ar[r]^-{\rho_{\mathfrak{l}}} & \mathring{\mathfrak{h}}^*/W.
}
\end{equation*}
Therefore, the commutative diagram \eqref{Gro-Spr res for quiv before quot} becomes Cartesian after base change to $\cH^\circ/\Gamma$, that is, the following diagram is Cartesian:
\begin{equation*}
\xymatrix{
\widetilde{R}(Q^{\add},\mathbf v,\mathbf{d})^\circ \ar[r] \ar[d]_{\pi_R}  & \cH^\circ \ar[d]^{p}\\
{R}(Q^{\add},\mathbf v,\mathbf{d})^\circ \ar[r]  & \cH^\circ/\Gamma, }
\end{equation*}
where ${R}(Q^{\add},\mathbf v,\mathbf{d})^\circ={R}(Q^{\rem},\mathbf v,\mathbf{d})\times \rho^{-1}(\cH^\circ/\Gamma)$ and $\widetilde{R}(Q^{\add},\mathbf v,\mathbf{d})^\circ={R}(Q^{\rem},\mathbf v,\mathbf{d})\times \nu^{-1}(\cH^\circ)$. After taking Hamiltonian reduction, the Cartesian diagram \eqref{Gro-Spr res for quiv_open part} follows.
\end{proof}

\begin{Lemma}\label{X^circ is affine}
$X^\circ$ is an affine variety.
\end{Lemma}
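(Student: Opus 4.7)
The plan is to first verify that $\widetilde X^\circ$ is affine, and then to descend to $X^\circ$ via the Cartesian square of Lemma \ref{open part Cartesian}, using that the quotient of an affine variety by a finite group is affine.

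For the first step, I would invoke the identification \eqref{eq: X tilde as Nakajima quiv} from Proposition \ref{X tilde as Nakajima quiv}, under which $\widetilde{\tau}$ becomes the morphism $\phi \times \id_{\mathcal L} \colon \widetilde{\cN}_{\theta^\delta}(Q^\aux,\mathbf v^\aux,\mathbf d^\aux) \times \mathcal L \to \mathfrak Z^\aux \times \mathcal L \cong \mathcal H$. By Example \ref{extended nak quiv}, the restriction of $\phi$ over $(\mathfrak Z^\aux)^\circ$ is an affine morphism, so the restriction of $\widetilde{\tau}$ over $(\mathfrak Z^\aux)^\circ \times \mathcal L$ is affine as well.

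Next, I would check that $\mathcal H^\circ$ is affine in $\mathcal H$. Indeed, $(\mathfrak Z^\aux)^\circ$ is the complement of finitely many hyperplanes in $\mathfrak Z^\aux$ by its defining formula \eqref{open part}; hence $(\mathfrak Z^\aux)^\circ \times \mathcal L$ and each of its $\Gamma$-translates are complements of finitely many hyperplanes in $\mathcal H$, and the finite intersection
\[
\mathcal H^\circ = \bigcap_{\gamma\in\Gamma}\gamma\bigl((\mathfrak Z^\aux)^\circ \times \mathcal L\bigr)
\]
is itself the complement in $\mathcal H$ of a finite union of hyperplanes, hence a principal affine open. Since $\mathcal H^\circ \subseteq (\mathfrak Z^\aux)^\circ \times \mathcal L$, composing with the affine morphism from the previous paragraph shows that $\widetilde X^\circ = \widetilde{\tau}^{-1}(\mathcal H^\circ)$ is affine.

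Finally, from the Cartesian square of Lemma \ref{open part Cartesian}, we have $\widetilde X^\circ \cong X^\circ \times_{\mathcal H^\circ/\Gamma} \mathcal H^\circ$. Since $p\colon \mathcal H^\circ \to \mathcal H^\circ/\Gamma$ is the quotient by the finite group $\Gamma$, the induced $\Gamma$-action on the fiber product $\widetilde X^\circ$ realizes $\pi_X$ as the quotient map $\widetilde X^\circ \to \widetilde X^\circ/\Gamma \cong X^\circ$. As the quotient of an affine variety by a finite group action is affine, we conclude that $X^\circ$ is affine. I do not foresee a genuine obstacle here: all the work has been done in Proposition \ref{X tilde as Nakajima quiv}, in the affineness statement of Example \ref{extended nak quiv}, and in the Cartesian square of Lemma \ref{open part Cartesian}.
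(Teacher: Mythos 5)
Your overall structure matches the paper's: establish that $\widetilde{X}^\circ$ is affine using the identification of $\widetilde{\tau}$ with $\phi\times\id$ and the affineness of $\phi$ over $(\mathfrak Z^\aux)^\circ$ from Example \ref{extended nak quiv}, then descend along $\pi_X$ using the Cartesian square of Lemma \ref{open part Cartesian}. The first half is fine (and you are right that $\mathcal H^\circ$ is a principal affine open in $\mathcal H$, a point the paper leaves implicit).

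The descent step is where you diverge. The paper observes that, by base change, $\pi_X\colon \widetilde{X}^\circ\to X^\circ$ is finite and surjective, and then invokes Chevalley's theorem on affineness ({\cite[Thm.~12.39]{GW}}): a scheme admitting a finite surjective morphism from an affine scheme is affine. You instead assert that the $\Gamma$-action on the fibre product $\widetilde X^\circ = X^\circ\times_{\mathcal H^\circ/\Gamma}\mathcal H^\circ$ realizes $\pi_X$ as the quotient map $\widetilde X^\circ\to\widetilde X^\circ/\Gamma\cong X^\circ$, so that $X^\circ$ is affine as a finite group quotient of an affine variety. That identification is true, but it is not entirely free: one needs to know that forming the fibre product over $\mathcal H^\circ/\Gamma = \mathcal H^\circ/\!\!/\Gamma$ commutes with taking $\Gamma$-invariants (which over $\bC$ follows from exactness of $(-)^\Gamma$ via the Reynolds operator), and one should at least remark that this is the point. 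Chevalley's theorem sidesteps this entirely -- it only needs finiteness and surjectivity, both immediate from base change -- which is why the paper's phrasing is a bit cleaner. Either route is valid; if you keep yours, add a sentence justifying the identification $X^\circ\cong\widetilde X^\circ/\Gamma$.
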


\begin{proof}
Note that $\widetilde{\tau}\colon \widetilde{X}^\circ\to \mathcal H^\circ$ is an affine morphism as discussed in Example \ref{extended nak quiv}, in particular $\widetilde{X}^\circ$ is an affine variety. By Lemma \ref{open part Cartesian}, $\pi_X\colon \widetilde{X}^\circ\to X^\circ$ is finite and surjective. Then $X^\circ$ is an affine variety by Chevalley's theorem on affineness \cite[Thm.~12.39]{GW}.
\end{proof}

\begin{Proposition}\label{prop of tau}
In the commutative diagram \eqref{Gro-Spr res for quiv}, we have:
\begin{enumerate}
\setlength{\parskip}{1ex}
    \item The morphism $\tau\colon X\to \calH/\Gamma$ is flat.
    \item Let $h\in \calH$ be an arbitrary point, then $\widetilde{X}_h:=\widetilde{\tau}^{-1}(h)$ is smooth and connected, $X_{p(h)}:=\tau^{-1}(p(h))$ is irreducible and normal, and $\pi_X\colon \widetilde{X}_h\to X_{p(h)}$ is a resolution of singularities.
\end{enumerate}
\end{Proposition}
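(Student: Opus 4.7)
The plan is to derive both parts by combining the smoothness of $\widetilde{\tau}$ established in Proposition \ref{X tilde as Nakajima quiv} with the classical Grothendieck-Springer analysis of Theorem \ref{GS resolution}, via miracle flatness for part (1) and a surjectivity-plus-connectedness argument for part (2).

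For part (1), I would first establish that every fiber of $\tau$ has dimension $d := \dim \widetilde{X} - \dim \mathcal{H}$. Since $\pi_X^{-1}(X_{p(h)}) = \bigcup_{\gamma \in \Gamma/\Stab_\Gamma(h)} \widetilde{X}_{\gamma h}$ is a finite union of smooth $d$-dimensional pieces and $\pi_X$ is proper and generically finite, we get $\dim X_{p(h)} \leq d$. The reverse inequality follows from upper semicontinuity of fiber dimension together with the smoothness of $\tau|_{X^\circ}$, which in turn follows from Lemma \ref{open part Cartesian} combined with the smoothness of $\widetilde{\tau}$. Since $X$ is smooth (in particular Cohen-Macaulay) and $\mathcal{H}/\Gamma \cong \prod_i S^{n_i}\bC$ is a regular affine variety, miracle flatness (Matsumura) then yields flatness of $\tau$.

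For part (2), smoothness of $\widetilde{X}_h$ is immediate from smoothness of $\widetilde{\tau}$, and under Proposition \ref{X tilde as Nakajima quiv} one has $\widetilde{X}_h \cong \cN_{\theta^\delta,h_1}(Q^\aux,\mathbf v^\aux,\mathbf d^\aux) \times \{h_2\}$; connectedness then follows from the classical fact that Nakajima quiver varieties are connected when non-empty. To deduce irreducibility of $X_{p(h)}$, I would show that $\pi_X|_{\widetilde{X}_h}\colon \widetilde{X}_h \to X_{p(h)}$ is surjective, so that irreducibility of $\widetilde{X}_h$ (smooth plus connected) transfers to $X_{p(h)}$. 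Surjectivity is obtained by applying Theorem \ref{GS resolution}(5) to each $\mathfrak{gl}(\mathbf v_{h(\varepsilon)})^*$-factor in the diagram \eqref{Gro-Spr res for quiv before quot}: fiberwise surjectivity of the Grothendieck-Springer map is preserved after intersecting with the $\theta$-stable locus of the remaining components and passing to the $G$-quotient.

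The main obstacle is verifying that $X_{p(h)}$ is normal with symplectic singularities and that $\pi_X|_{\widetilde{X}_h}$ is a symplectic resolution. Birationality of $\pi_X|_{\widetilde{X}_h}$ follows because on the dense subset of $\mathcal{H}^\circ$ where $\Gamma$ acts freely, the Cartesian square of Lemma \ref{open part Cartesian} realizes $\pi_X|_{\widetilde{X}_h}$ as an isomorphism onto $X_{p(h)} \cap X^\circ$. For normality, flatness of $\tau$ combined with smoothness of $X$ and regularity of $\mathcal{H}/\Gamma$ makes $X_{p(h)}$ Cohen-Macaulay, and a dimension count on the exceptional locus of $\pi_X|_{\widetilde{X}_h}$ (paralleling the nilpotent-cone analysis in \cite{Slo}) shows that its image has codimension at least two, giving Serre's $R_1$ condition. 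Finally, the Nakajima symplectic form on $\widetilde{X}_h$ transfers along the birational resolution to a form on the smooth locus of $X_{p(h)}$ that extends holomorphically to $\widetilde{X}_h$, exhibiting $X_{p(h)}$ as a symplectic singularity in the sense of Beauville and making $\pi_X|_{\widetilde{X}_h}$ a symplectic resolution.
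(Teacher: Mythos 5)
Your overall strategy (dimension count plus miracle flatness for (1); surjectivity onto $X_{p(h)}$ plus connectedness of Nakajima quiver varieties for irreducibility) matches the paper's, but two steps have genuine gaps.

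\textbf{Birationality of $\pi_X\colon \widetilde{X}_h\to X_{p(h)}$.} You invoke the Cartesian square of Lemma~\ref{open part Cartesian} "on the dense subset of $\mathcal{H}^\circ$ where $\Gamma$ acts freely." That square only lives over $\mathcal{H}^\circ$, so it says nothing about the fiber over $p(h)$ unless $h\in\mathcal{H}^\circ$ — and part (2) is asserted for an \emph{arbitrary} $h\in\mathcal{H}$. For $h$ with nontrivial $\Gamma$-stabilizer or $h\notin\mathcal{H}^\circ$, the intersection $X_{p(h)}\cap X^\circ$ is empty, and your argument collapses. The paper instead extracts birationality from Theorem~\ref{GS resolution}(5) fiberwise: there is a maximal open $U\subseteq\overline{\rho}^{-1}(p(h))$ over which $\pi_R$ is quasi-finite, and $\pi_R$ restricts to an isomorphism $\pi_R^{-1}(U)\cong U$. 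After intersecting with the stable locus and quotienting by $G$, one gets an open $V\subseteq X_{p(h)}$ with $\pi_X^{-1}(V)\cong V$. A priori $V$ could be empty; the paper rules this out only \emph{after} the dimension count forces $\dim X_{p(h)}=\dim\widetilde{X}_h$, which makes $\pi_X|_{\widetilde{X}_h}$ generically finite and hence $V\neq\emptyset$. This logical ordering matters and is missing from your write-up.

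\textbf{Normality / the $R_1$ condition.} Your dimension count "paralleling the nilpotent-cone analysis in \cite{Slo}" is not a proof — it is precisely the step that must be argued, and the Slodowy analysis of $\mathcal{N}\subset\mathfrak{g}$ does not directly transfer to the reduced moduli $X_{p(h)}$. The paper's actual argument is both more elementary and self-contained: suppose $D\subseteq X_{p(h)}\setminus V$ is a codimension-one irreducible component. Over $D$ the map $\pi_X$ has positive-dimensional fibers, so $\dim\pi_X^{-1}(D)\geq\dim D+1=\dim\widetilde{X}_h$. Since $\widetilde{X}_h$ is irreducible (smooth and connected, being a Nakajima quiver variety), this forces $\pi_X^{-1}(D)=\widetilde{X}_h$, contradicting surjectivity of $\pi_X|_{\widetilde{X}_h}$ onto $X_{p(h)}$. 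Together with the l.c.i.\ (hence Cohen--Macaulay, hence $S_2$) property from flatness, Serre's criterion gives normality. You should replace the appeal to \cite{Slo} with this argument.

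Two smaller points: your lower bound on $\dim X_{p(h)}$ via "upper semicontinuity of fiber dimension" should instead cite the standard inequality for a morphism from an irreducible variety \cite[Cor.~14.95]{GW}, which is what the paper uses; and your surjectivity argument for $\pi_X|_{\widetilde{X}_h}$ is essentially the paper's, but you should be explicit that surjectivity of $\pi_R\colon\overline{\nu}^{-1}(h)\to\overline{\rho}^{-1}(p(h))$ is automatically inherited after intersecting the target with the stable locus (the key non-obvious fact is about the open set $U$, not about surjectivity).
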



\begin{proof}
Let us denote the compositions of the horizontal maps in the commutative diagram \eqref{Gro-Spr res for quiv before quot} to be $\overline{\nu}$ and $\overline{\rho}$. By Theorem \ref{GS resolution}(5), the morphism $\pi_R\colon \overline{\nu}^{-1}(h)\to \overline{\rho}^{-1}(p(h))$ is proper and surjective, and there exists an open subscheme $U\subseteq  \overline{\rho}^{-1}(p(h))$ such that $\pi_R^{-1}(U)$ is the largest open subscheme in $\overline{\nu}^{-1}(h)$ on which $\pi_R$ is quasi-finite. In fact, $\pi_R$ induces isomorphism between $\pi_R^{-1}(U)$ and $U$. It follows that the morphism 
\begin{align*}
    \overline{\nu}^{-1}(h)\cap \pi_R^{-1}((\mu^\add)^{-1}(0)^{\theta\emph{-}s})\to \overline{\rho}^{-1}(p(h))\cap  (\mu^\add)^{-1}(0)^{\theta\emph{-}s}
\end{align*}
is a proper and surjective, and $\pi_R^{-1}(U\cap (\mu^\add)^{-1}(0)^{\theta\emph{-}s})$ is the largest open subscheme of $\overline{\nu}^{-1}(h)\cap \pi_R^{-1}((\mu^\add)^{-1}(0)^{\theta\emph{-}s})$ on which the morphism is quasi-finite. Moreover $\pi_R$ induces isomorphism between $\pi_R^{-1}(U\cap (\mu^\add)^{-1}(0)^{\theta\emph{-}s})$ and $U\cap (\mu^\add)^{-1}(0)^{\theta\emph{-}s}$. It worths mention that $U\cap (\mu^\add)^{-1}(0)^{\theta\emph{-}s}$ might be empty, but we shall see shortly that this can not happen.

Taking quotient by $G$, we get the following commutative diagram
\begin{equation*}
\xymatrix{
\overline{\nu}^{-1}(h)\cap \pi_R^{-1}((\mu^\add)^{-1}(0)^{\theta\emph{-}s})\ar[r]^{\pi_R} \ar[d]_{\cdot/\!\!/ G} & \overline{\rho}^{-1}(p(h))\cap  (\mu^\add)^{-1}(0)^{\theta\emph{-}s} \ar[d]^{\cdot/\!\!/ G}\\
\widetilde{X}_h \ar[r]^{\pi_X} & X_{p(h)}\,,
}
\end{equation*}
which is Cartesian because $G$-action on $\theta$-stable locus is free. In particular $\pi_X\colon \widetilde{X}_h\to X_{p(h)}$ is proper and surjective. By the maximality of $U$, $U$ is $G$-invariant, and let us denote $V:=\left(U\cap (\mu^\add)^{-1}(0)^{\theta\emph{-}s}\right)/\!\!/ G$. Then $\pi_X^{-1}(V)$ is the largest open subscheme in $\widetilde{X}_h$ on which $\pi_X$ is quasi-finite, and $\pi_X$ induces isomorphism between $\pi_X^{-1}(V)$ and $V$. 

We note that $\widetilde{X}_h$ is isomorphic to the Nakajima quiver variety 
\begin{align*}
\cN_{\theta^\delta,\bar{h}}(Q^\aux,\mathbf v^\aux,\mathbf{d}^\aux)=(\mu^{\aux})^{-1}(\bar{h})^{\theta^\delta\emph{-}ss}/\!\!/G^{\aux},
\end{align*}
where $\bar{h}$ is the image of $h$ under the projection of vector space $\cH\to \mathfrak{Z}^{\aux}$. In particular $\widetilde{X}_h$ is irreducible\,\footnote{Because Nakajima quiver varieties are smooth and connected \cite[\S 1]{CB}.}. This implies that $X_{p(h)}$ is irreducible and $\dim X_{p(h)}\leqslant \dim \widetilde{X}_h$. By \cite[Cor.~14.95]{GW} we have
\begin{align*}
    \dim X_{p(h)}\geqslant \dim X-\dim \calH/\Gamma.
\end{align*}
By the smoothness of $\widetilde{\tau}$, we have
\begin{align*}
    \dim \widetilde{X}_h= \dim \widetilde{X}-\dim \calH.
\end{align*}
Since $\pi_X$ is generically finite, we have $\dim X=\dim \widetilde{X}$, hence we get another inequality $\dim X_{p(h)}\geqslant \dim \widetilde{X}_h$. This forces the equality $\dim X_{p(h)}= \dim \widetilde{X}_h$ to hold. In particular $\pi_X\colon \widetilde{X}_h\to X_{p(h)}$ is generically finite. This implies that $V$ is nonempty. In particular $\pi_X\colon \widetilde{X}_h\to X_{p(h),\mathrm{red}}$ is a proper birational morphism.

By the above argument, $\dim X_{p(h)}= \dim {X}-\dim \calH/\Gamma$. We note that $\calH/\Gamma$ is smooth because it is a product of $\bC^n/\mathfrak{S}_n$ for various of $n$ and each $\bC^n/\mathfrak{S}_n$ is smooth. Then $\tau$ is flat by \cite[Thm.~14.126]{GW}. This proves (1).

To prove (2), it remains to show that $X_{p(h)}$ is a normal scheme. By the flatness of $\tau\colon X\to \calH/\Gamma$ and the smoothness of $X$ and $\calH/\Gamma$, $X_{p(h)}$ is an l.c.i.~scheme, in particular $X_{p(h)}$ is Cohen-Macaulay. Since $\pi_X$ induces isomorphism $\pi_X^{-1}(V)\cong V$ and $\pi_X^{-1}(V)$ is smooth, $X_{p(h)}$ is generically smooth. 

We claim that $X_{p(h)}\setminus V\subseteq  X_{p(h)}$ has codimension $>1$. Suppose the claim is false, let us take $D$ to be a codimension one irreducible component of $X_{p(h)}\setminus V$, then the dimension of fibers of $\pi_X$ along $D$ is positive, hence 
$$\dim \pi_X^{-1}(D)\geqslant \dim D+1=\dim \widetilde{X}_h.$$ 
This forces $\widetilde{X}_h=\pi_X^{-1}(D)$ by the connectedness of $\widetilde{X}_h$, contradicting the fact that $\widetilde{X}_h$ maps surjectively onto $X_{p(h)}$. This proves our claim. Therefore $X_{p(h)}$ is a normal scheme by Serre's criterion of normality \cite[\href{https://stacks.math.columbia.edu/tag/0345}{Lem.~0345}]{stacks-project}. 
\end{proof}

\begin{Remark}\label{pi_X is a symp res}
$X$ has a natural Poisson structure that is induced from the Poisson structure on $R(Q^{\add},\mathbf v,\mathbf{d})$ constructed in the beginning of \S\ref{sec sym quiv vs nak quiv}. Similarly, $\widetilde{X}$ has a natural Poisson structure that is induced from the Poisson structure on $R(Q^{\aux},\mathbf v^\aux,\mathbf{d}^\aux)$. The morphism $\mathfrak{p}\colon R(Q^{\aux},\mathbf v^{\aux},\mathbf{d}^{\aux})\times \bC^{\mathcal E^\add}\to R(Q^{\add},\mathbf v,\mathbf{d})$ in Definition \ref{map A_n leg to loop} is Poisson, so $\pi_X\colon \widetilde{X}\to X$ is also a Poisson morphism. Moreover, the morphisms $\tau\colon X\to \calH/\Gamma$ and $\widetilde{\tau}\colon \widetilde{X}\to \calH$ are Poisson where the targets are endowed with a trivial Poisson structure. It follows that $\pi_X\colon \widetilde{X}_h\to X_{p(h)}$ is a Poisson morphism. We note that the induced Poisson structure on $\widetilde{X}_h$ is exactly the natural symplectic structure on the Nakajima quiver variety; thus $\pi_X\colon \widetilde{X}_h\to X_{p(h)}$ is a symplectic resolution.
\end{Remark}

The restriction of $\tau$ to a torus fixed component is also flat. 

\begin{Proposition}\label{tau at fixed loci is flat}
Suppose that $\sigma$ is a cocharacter of $\sA$, and $F\in \Fix_\sigma(X)$ is a $\sigma$-fixed locus. Then the morphism $\tau|_F\colon F\to \calH/\Gamma$ is flat. Moreover, $(\tau|_F)^{-1}(0)$ is irreducible.
\end{Proposition}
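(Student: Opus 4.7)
The plan is to realize $F$ itself as a symmetric quiver variety via Lemma~\ref{fix pts of sym quiv var}, apply Proposition~\ref{prop of tau} to it, and then factor $\tau|_F$ through the canonical flat map for $F$. By Lemma~\ref{fix pts of sym quiv var}, $F$ is a symmetric quiver variety carrying a self-dual $\sA$-action, so Proposition~\ref{prop of tau} applied to $F$ yields a flat morphism $\tau_F\colon F \to \calH_F/\Gamma_F$ whose zero fiber is irreducible.

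The key step is to produce a canonical identification $\calH_F \cong \calH$ together with a subgroup inclusion $\Gamma_F \subseteq \Gamma$, under which $\tau|_F = \iota \circ \tau_F$, where $\iota\colon \calH/\Gamma_F \to \calH/\Gamma$ is the induced finite quotient map. By Lemma~\ref{self-dual torus}, $\sigma$ factors through $T_{\mathrm{fram}} \times g(\bC^*)^{\mathcal A}g^{-1}$, so $\sigma$ acts trivially on every loop of $\mathcal E$ and on every added loop in $Q_0$. Hence each $\varepsilon \in \mathcal E^{\add}$ splits on $F$ into one loop per $\sigma$-weight component of its gauge space, while any loop of $Q$ lying in $\mathcal A \cup \mathcal A^*$ retains nontrivial residual $\sA$-weight on $F$ and thus does not enter $\mathcal E^{\add}_F$. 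Using $\sum_w \bv_{F,(h(\varepsilon),w)} = \bv_{h(\varepsilon)}$, one obtains $\calH_F = \bigoplus_{\varepsilon}\bigoplus_w \bC^{\bv_{F,(h(\varepsilon),w)}} = \calH$ and $\Gamma_F = \prod_{\varepsilon, w}\mathfrak{S}_{\bv_{F,(h(\varepsilon),w)}} \subseteq \Gamma$. The equality $\tau|_F = \iota \circ \tau_F$ then follows from the characteristic-polynomial interpretation of $\widetilde{\tau}$ in diagram~\eqref{Gro-Spr res for quiv before quot}: on $F$, each loop endomorphism is block-diagonal with respect to the $\sigma$-weight decomposition, and its characteristic polynomial equals the product of the block polynomials.

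With the factorization in place, flatness of $\tau|_F$ is immediate: $\tau_F$ is flat by Proposition~\ref{prop of tau}(1) applied to $F$, while $\iota$ is a finite surjection between smooth affine varieties of the same dimension $\dim \calH$ and therefore flat by the miracle-flatness argument used in the proof of Proposition~\ref{prop of tau}, and compositions of flat morphisms are flat. For irreducibility, $\Gamma$ fixes $0 \in \calH$, hence $\iota^{-1}([0]) = \{[0]\}$ and $(\tau|_F)^{-1}(0) = \tau_F^{-1}(0)$, which is irreducible by Proposition~\ref{prop of tau}(2) applied to $F$. The main technical point is the loop-by-loop bookkeeping identifying $\mathcal E^{\add}_F$ with the $\sigma$-weight refinement of $\mathcal E^{\add}$, together with the verification that loops in $\mathcal A \cup \mathcal A^*$ do not contribute to $\mathcal E^{\add}_F$; everything else is a formal consequence of the construction of \S\ref{sec sym quiv vs nak quiv}.
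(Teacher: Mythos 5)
Your proposal is correct and takes a genuinely different route from the paper's proof. You bootstrap: view $F$ itself as a symmetric quiver variety via Lemma~\ref{fix pts of sym quiv var}, apply Proposition~\ref{prop of tau} to $F$'s own Grothendieck--Springer diagram, and then factor $\tau|_F$ through $F$'s intrinsic map $\tau_F$ and a further finite quotient $\iota\colon \calH/\Gamma_F\to \calH/\Gamma$. The key verification is that $\tau|_F = \iota\circ\tau_F$, which rests on the loop bookkeeping you describe together with the compatibility of the $Q$-moment map restricted to the $\sigma$-fixed locus with the $Q_\phi$-moment map (i.e.\ $\mu_i$ is block-diagonal with blocks $\mu_{\phi,(i,w)}$). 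Both ingredients are true, and your sketch of them is reasonable, though a full write-up would want to check the moment-map compatibility explicitly rather than only the eigenvalue identity for the $\mathcal E$-loops. The paper instead works directly inside $\widetilde X$: it picks a component $\widetilde F\in\Fix_\sigma(\widetilde X)$ over $F$, shows $\widetilde F\to F$ is proper surjective generically finite (using that every fixed component of the Grothendieck--Springer resolution surjects onto the fixed locus below), applies miracle flatness to a dimension estimate $\dim F_b = \dim F-\dim\calH/\Gamma$, and for irreducibility observes that $\widetilde F_0$ is a Nakajima quiver variety. The paper's route avoids re-invoking the entire \S\ref{sec sym quiv vs nak quiv} construction for the smaller quiver $(Q_\phi,\bv_\phi,\bd_\phi)$ and avoids the $\calH_F\cong\calH$ and $\Gamma_F\subseteq\Gamma$ bookkeeping, at the cost of the Grothendieck--Springer fixed-locus surjectivity argument (the footnote with the \'etale cover over the regular semisimple locus). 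Your route is conceptually cleaner in exploiting the self-similar structure of symmetric quiver varieties under taking torus fixed loci, and it also produces the intermediate quotient $\calH/\Gamma_F$, but it does require verifying that all the auxiliary structures on $F$ (the decomposition $\mathcal A_\phi\sqcup\mathcal A_\phi^*\sqcup\mathcal E_\phi$, the genericity of $\theta_\phi$, the nonvanishing of $\bd_\phi$) exist and are compatible with those of $X$.
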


\begin{proof}
$F$ determines a lift $\xi\colon \bC^*\to G\times \sA$ of $\sigma\colon \bC^*\to \sA$ along the projection $G\times \sA\to \sA$. Then 
\begin{align}\label{fixed comp as quot}
    F=\left((\mu^\add)^{-1}(0)^{\theta\emph{-}s}\cap R(Q^\add,\mathbf v,\mathbf{d})^\xi\right)/\!\!/ G^{\xi}.
\end{align}
Let us take $\widetilde{F}\in \Fix_\sigma(\widetilde{X})$ such that $\pi_X(\widetilde{F})\subseteq  F$. $\widetilde{F}$ determines a further lift $\widetilde{\xi}\colon \bC^*\to G^\aux\times \sA$  of $\xi\colon \bC^*\to G\times \sA$ along the projection $G^\aux\times \sA\to G\times \sA$. In view of the equality \eqref{stability compatible}, we get an isomorphism 
\begin{align*}
    \widetilde{F}\cong \left(\overline{\mathfrak{p}}^{-1}((\mu^\add)^{-1}(0)^{\theta\emph{-}s})\cap \left((\mu^\leg)^{-1}(\mathfrak{Z}^\leg)^{\leg\emph{-}s}\times \bC^{\mathcal E^\leg}\right)\cap R(Q^\aux,\mathbf v^\aux,\mathbf{d}^\aux)^{\widetilde{\xi}}\right)/\!\!/ (G^\aux)^{\widetilde{\xi}}.
\end{align*}
Decomposing $G^\aux\times \sA$ into $G\times \sA\times G^\leg$, we can write $\widetilde{\xi}=(\xi,\xi^\leg)$ where $\xi^\leg\colon \bC^*\to G^\leg$ is the corresponding component of $\widetilde{\xi}$. Then we have $(G^\aux)^{\widetilde{\xi}}=G^\xi\times (G^\leg)^{\xi^\leg}$. In view of the following isomorphism
\begin{align}\label{tilde fixed comp as quot}
    \widetilde{R}(Q^\add,\mathbf v,\mathbf{d})\cong (\mu^\leg)^{-1}(\mathfrak{Z}^\leg)^{\leg\emph{-}s}/\!\!/ G^\leg\times \bC^{\mathcal E^\leg}
\end{align}
in Lemma \ref{Gro-Spr for quiv as legs}, the quotient $\left(\left((\mu^\leg)^{-1}(\mathfrak{Z}^\leg)^{\leg\emph{-}s}\times \bC^{\mathcal E^\leg}\right)\cap R(Q^\aux,\mathbf v^\aux,\mathbf{d}^\aux)^{\widetilde{\xi}}\right)/\!\!/ (G^\leg)^{\xi^\leg}$ is identified with a $\xi$-fixed component of $\widetilde{R}(Q^\add,\mathbf v,\mathbf{d})$. Let us denote this component by $M$. Then we have
\begin{align*}
    \widetilde{F}\cong \left(\pi_R^{-1}((\mu^\add)^{-1}(0)^{\theta\emph{-}s})\cap M\right)/\!\!/ G^\xi.
\end{align*}

For a complex reductive group $L$ together with a cocharacter $\lambda\colon \bC^*\to L$, consider the Grothendieck-Springer morphism $\pi\colon \widetilde{\mathfrak{l}}^*\to \mathfrak{l}^*$ in the diagram \eqref{Grothendieck-Springer resolution}.
It is known that $\pi$ maps every connected component of $(\widetilde{\mathfrak{l}}^*)^\lambda$ surjectively onto $(\mathfrak{l}^*)^{\lambda}$ \footnote{Notice that restriction of $\pi$ to the preimage of regular semisimple locus $\pi^{-1}((\mathfrak{l}^*)^{\mathrm{rs}})\to (\mathfrak{l}^*)^{\mathrm{rs}}$ is a finite \'etale covering, so every connected component of $\pi^{-1}((\mathfrak{l}^*)^{\mathrm{rs}})^\lambda$ is a connected component of $\pi^{-1}((\mathfrak{l}^*)^{\mathrm{rs}}\cap (\mathfrak{l}^*)^\lambda)$. In particular, every connected component of $\pi^{-1}((\mathfrak{l}^*)^{\mathrm{rs}})^\lambda$ maps surjective onto $(\mathfrak{l}^*)^{\mathrm{rs}}\cap (\mathfrak{l}^*)^\lambda$. Note that the restriction of the morphism $\nu$ in the diagram \eqref{Grothendieck-Springer resolution} to $(\widetilde{\mathfrak{l}}^*)^\lambda$ is smooth by the same argument as the proof of Lemma \ref{symp res has cond star}; therefore taking intersection with open subset $\pi^{-1}((\mathfrak{l}^*)^{\mathrm{rs}})=\nu^{-1}((\mathfrak{h}^*)^{\mathrm{rs}})$ gives a one-to-one correspondence between $\Fix_\lambda(\widetilde{\mathfrak{l}}^*)$ and $\Fix_\lambda(\pi^{-1}((\mathfrak{l}^*)^{\mathrm{rs}}))$. This implies that every connected component of $(\widetilde{\mathfrak{l}}^*)^\lambda$ maps surjective onto $(\mathfrak{l}^*)^\lambda$.}. Applying the aforementioned fact to $L=\prod_{\varepsilon\in \mathcal E^\add}\GL(\mathbf v_{h(\varepsilon)})$ and $\lambda$ being the cocharacter induced by $\bC^*\xrightarrow{\xi}G\hookrightarrow L$, we see that $\pi_R$ maps $M$ surjectively onto ${R}(Q^\add,\mathbf v,\mathbf{d})^\xi$. Comparing \eqref{fixed comp as quot} and \eqref{tilde fixed comp as quot}, we see that $\pi_X$ maps $\widetilde{F}$ surjectively onto $F$. Note that $\widetilde{\tau}|_{\widetilde{F}}\colon \widetilde{F}\to \calH$ is smooth by the argument of proof of Lemma \ref{symp res has cond star}, so $\widetilde{F}\cap \widetilde{X}^\circ$ is nonempty, in particular $\pi|_{\widetilde{F}}\colon \widetilde{F}\to X$ is generically finite. Thus the morphism $\widetilde{F}\to F$ is proper, surjective, and generically finite. In particular $\dim \widetilde{F}=\dim F$.

For $h\in\cH$, define $\widetilde{F}_h:=\widetilde{F}\times_\cH \{h\}$; and for $b\in \cH/\Gamma$, define $F_b:=F\times_{\cH/\Gamma}\{b\}$. By the smoothness of $\widetilde{\tau}|_{\widetilde{F}}\colon \widetilde{F}\to \calH$, we have
\begin{align*}
    \dim \widetilde{F}_h=\dim \widetilde{F}-\dim \calH
\end{align*}
for any $h\in \calH$. Since $\widetilde{F}\to F$ is surjective and $p\colon \calH\to \calH/\Gamma$ is finite, we have 
\begin{align*}
    \dim F_b\leqslant \max_{h\in p^{-1}(b)}\dim \widetilde{F}_h=\dim \widetilde{F}-\dim \calH=\dim {F}-\dim \calH/\Gamma
\end{align*}
for any $b\in \calH/\Gamma$. By \cite[Cor.~14.95]{GW} we have
\begin{align*}
    \dim F_b\geqslant \dim F-\dim \calH/\Gamma.
\end{align*}
This forces the equality $\dim F_b= \dim F-\dim \calH/\Gamma$ to hold. Then $\tau|_F$ is flat by \cite[Thm.~14.126]{GW}. 

Finally, $\widetilde{F}_0$ maps surjectively onto $F_0$, because $0\in \calH$ is the unique preimage of $0\in \calH/\Gamma$ under the morphism $p$. Since $\widetilde{F}_0$ is isomorphic to a $\sigma$-fixed component of Nakajima quiver variety $\cN_{\theta^\delta}(Q^\aux,\mathbf v^\aux,\mathbf{d}^\aux)$, $\widetilde{F}_0$ itself is a Nakajima quiver variety. Then $\widetilde{F}_0$ is irreducible by \cite[\S 1]{CB}. Thus $F_0$ is irreducible.
\end{proof}

\subsection{Smallness of the affinization morphism}\label{subsec smallness}
The Jordan-H\"older morphism $$\mathsf{JH}\colon X=\cM_\theta(Q,\mathbf v,\mathbf{d})\to \cM_0(Q,\mathbf v,\mathbf{d})=Y$$ maps $\theta$-stable quiver representation to its semisimplification, where $\cM_0(Q,\mathbf v,\mathbf{d})=R(Q,\bv,\bd)/\!\!/G$ is the affine quotient ($G=\prod_{i\in Q_0}\GL(\bv_i)$). Geometrically, $\mathsf{JH}$ is induced from the open immersion $R(Q,\bv,\bd)^{\theta\emph{-}s}\hookrightarrow R(Q,\bv,\bd)$ by taking $G$ quotients on both sides.
\begin{Lemma}\label{lem proper birational}
$\mathsf{JH}$ is proper and birational. Moreover, $\bC[Y]=\Gamma(X,\mathcal O_X)$, i.e. $Y$ is the affinization of $X$. 
\end{Lemma}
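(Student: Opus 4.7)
My approach is to dispatch the three claims---properness, birationality, and $\bC[Y] = \Gamma(X, \mathcal O_X)$---in sequence, leaning on the structure already set up in \S\ref{sec sym quiv vs nak quiv}. Properness of $\mathsf{JH}$ is a standard GIT fact: the stability $\theta \in \bQ^{Q_0}$ determines a $G$-equivariant line bundle $L_\theta$ on $R := R(Q,\mathbf v,\mathbf d)$ so that $X = \Proj \bigoplus_{n\geq 0} \Gamma(R, L_\theta^{\otimes n})^G$ is projective over $Y = \Spec \bC[R]^G$, and $\mathsf{JH}$ is the induced projective morphism.

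For birationality, the plan is to show $\mathsf{JH}$ is surjective and generically injective; combined with the fact that $X$ and $Y$ are irreducible of the same dimension $\dim R - \dim G$ (free action on the stable locus, generic trivial stabilizer), this will do. Surjectivity follows by combining properness (so $\mathsf{JH}(X)$ is closed in $Y$) with the density of $R^{\theta\emph{-}s}\subseteq R$: the image of $R^{\theta\emph{-}s}$ in $Y$ is then dense, since the GIT quotient $R \to Y$ is surjective. For generic injectivity, I would exhibit a dense open subset of $R$ whose points are simple framed $Q$-representations---these have closed $G$-orbits and trivial stabilizers, so their images in $Y$ have unique preimages in $X$. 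The existence of such a dense open locus is the crucial input, and I would extract it from Proposition \ref{X tilde as Nakajima quiv}: under the identification $\widetilde X \cong \widetilde{\cN}_{\theta^\delta}(Q^\aux,\mathbf v^\aux,\mathbf d^\aux) \times \mathcal L$, Crawley-Boevey's theorem \cite[Thm.~1.3]{CB} guarantees that every representation of the deformed preprojective algebra over the open locus $\mathcal H^\circ \subseteq \mathcal H$ is simple. Descending along the Cartesian diagram of Lemma \ref{open part Cartesian} (where $p$ is finite and $\pi_X$ is surjective) and then pushing through the comparison map $\mathfrak p$ of Definition \ref{map A_n leg to loop}, one obtains the required dense open of simple representations of $Q$.

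For the equality $\bC[Y] = \Gamma(X, \mathcal O_X)$, the scheme $Y$ is normal, since $\bC[R]$ is a polynomial ring (hence a normal UFD) and the ring of invariants under the reductive group $G$ remains normal. Because $\mathsf{JH}$ is proper and birational with normal target, Zariski's main theorem yields $\mathsf{JH}_* \mathcal O_X = \mathcal O_Y$, and taking global sections produces $\Gamma(X,\mathcal O_X)=\Gamma(Y,\mathcal O_Y)=\bC[Y]$.

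The main obstacle is the generic injectivity step in the birationality argument, specifically the transfer of \emph{simplicity} from the auxiliary quiver $Q^\aux$ back to the original symmetric quiver $Q$. This passage relies on the explicit formula defining $\mathfrak p$ and on the stability comparison of Lemma \ref{compare stabilities}, which pinpoints the stability conditions on $Q^\aux$ compatible with $\theta$-stability on $Q$; once simplicity is descended, the two other claims follow essentially formally.
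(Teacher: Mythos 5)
Your decomposition of the lemma is reasonable and the properness step matches the paper exactly, but the logical order you choose — birationality first, then $\bC[Y]=\Gamma(X,\mathcal O_X)$ via Zariski's main theorem — is the reverse of the paper's, and this reversal is precisely what opens the gap you flag at the end.

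The paper establishes $\bC[Y]=\Gamma(X,\mathcal O_X)$ \emph{first}, as an independent GIT statement, by applying the Quantization Theorem of Halpern-Leistner \cite[Thm.~3.29]{HL} to the structure sheaf of the quotient stack $[R(Q,\bv,\bd)/G]$. This needs no birationality input at all, and since $Y$ is affine it immediately upgrades to $\mathcal O_Y\xrightarrow{\sim}\mathsf{JH}_*\mathcal O_X$. Birationality then falls out cheaply: the map $\bar\tau\colon Y\to\cH/\Gamma$ with $\tau=\bar\tau\circ\mathsf{JH}$ shows $\mathsf{JH}$ restricts to a morphism $X^\circ\to Y^\circ$; by Lemma \ref{X^circ is affine} this restriction is both proper and affine, hence finite, and a finite morphism under which pushforward of $\mathcal O_X$ is $\mathcal O_Y$ is an isomorphism. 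No simplicity of representations is ever invoked.

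Your route, by contrast, needs to produce a dense open locus of simple framed $Q$-representations before birationality is available, and your proposed source — descending Crawley-Boevey's simplicity criterion from $(\mu^\aux)^{-1}(\mathfrak Z^\aux)$ over $(\mathfrak Z^\aux)^\circ$ back through $\mathfrak p$ — does not close. Simplicity is not preserved under $\mathfrak p$: a $Q^\add$-subrepresentation of $\mathfrak p(V^\aux,(t_\varepsilon))$ has no reason to extend to a $Q^\aux$-subrepresentation of $V^\aux$ along the $A_{n-1}$ legs (the leg maps $C_i,D_i$ need not preserve the relevant subspaces), so $V^\aux$ simple does not force $\mathfrak p(V^\aux,(t_\varepsilon))$ simple. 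You identify this as the ``main obstacle,'' and it is in fact a genuine gap, not a detail to be smoothed over; without it, generic injectivity is not established and the ZMT argument for the final claim has nothing to stand on. The ZMT step itself (proper birational onto the normal affine quotient $Y=\Spec\bC[R]^G$ gives $\mathsf{JH}_*\mathcal O_X=\mathcal O_Y$) is correct, but it is downstream of the hole.

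The moral comparison: your approach tries to prove birationality by a representation-theoretic existence statement about simple objects, which is subtle in the framed symmetric setting and not directly supplied by the tools built in \S\ref{sec sym quiv vs nak quiv}. The paper's approach treats the cohomological equality $\bC[Y]=\Gamma(X,\mathcal O_X)$ as the primitive input (provable by pure GIT via the Quantization Theorem) and derives birationality from it together with affineness of $X^\circ$. If you want to salvage your order, you would need an independent argument that a generic framed $Q$-representation is simple — for instance via a dimension/Euler-form count along the lines of Le Bruyn–Procesi applied to $Q_\bullet$ — but that is a substantial additional input that the lemma's proof in the paper deliberately avoids.
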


\begin{proof}
By geometric invariant theory, $\mathsf{JH}$ is proper. Applying the Quantization Theorem \cite[Thm.~3.29]{HL} to the case $F^\bullet=G^\bullet=$ structure sheaf of the stack $[R(Q,\mathbf v,\mathbf{d})/G]$, we see that the natural map 
\begin{align}\label{aff isom}
    \bC[Y]=H^0([R(Q,\mathbf v,\mathbf{d})/G],\mathcal O_{[R(Q,\mathbf v,\mathbf{d})/G]})\to H^0(R(Q,\mathbf v,\mathbf{d})^{\theta\emph{-}s}/G,\mathcal O_{[R(Q,\mathbf v,\mathbf{d})/G]})=\Gamma(X,\mathcal O_X)
\end{align}
is an isomorphism. Since $Y$ is affine, \eqref{aff isom} implies that the natural map $\mathcal O_Y\to \mathsf{JH}_*\mathcal O_X$ is an isomorphism.

The map $R(Q,\bv,\bd)\subset R(Q^{\add},\bv,\bd)\to \cH/\Gamma$ induced from the bottom of \eqref{Gro-Spr res for quiv before quot} is $G$-invariant, and it descends to a morphism $\bar{\tau}\colon Y=\cM_0(Q,\mathbf v,\mathbf{d})\to \cH/\Gamma$. We have $\tau=\bar{\tau}\circ\mathsf{JH}$ by construction. Recall the open subset $\cH^\circ$ defined in \eqref{H circ}, and define $Y^\circ :=\bar{\tau}^{-1}(\cH^\circ/\Gamma)$. Then $\mathsf{JH}\colon X^\circ\to Y^\circ$ is proper and affine, and therefore finite. Then it follows from the isomorphism $\mathcal O_Y\cong \mathsf{JH}_*\mathcal O_X$ that $\mathsf{JH}\colon X^\circ\to Y^\circ$ is an isomorphism. In particular, $\mathsf{JH}$ is birational.
\end{proof}

The main result of this subsection is the following.
\begin{Proposition}\label{prop smallness}
All irreducible components of $X\times_Y X$ except the diagonal $\Delta_X$ have dimension strictly smaller than $\dim X$. In particular, $\dim X\times_Y X=\dim X$, and $\mathsf{JH}\colon X\to Y$ is small.
\end{Proposition}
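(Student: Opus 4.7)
The plan is to control the dimension of non-diagonal components of $X\times_Y X$ by combining a codimension-one observation on the base $\cH/\Gamma$ with a fiberwise semismallness estimate. Since $\tau = \bar\tau\circ\JH$, both projections $X\times_Y X \rightrightarrows X$ compose to the same map to $\cH/\Gamma$, presenting $X\times_Y X$ as a family over $\cH/\Gamma$ whose fiber over $b\in\cH/\Gamma$ is $X_b\times_{Y_b} X_b$. By Lemma~\ref{lem proper birational}, $\JH$ restricts to an isomorphism $X^\circ\xrightarrow{\sim} Y^\circ$, so any non-diagonal irreducible component $Z\subset X\times_Y X$ must contain a point $(x_1,x_2)$ with $x_1\neq x_2$ and $\JH(x_1)=\JH(x_2)$; both $x_i$ then lie outside $X^\circ$, so the closure $B_Z$ of the image of $Z$ in $\cH/\Gamma$ is contained in $(\cH/\Gamma)\setminus(\cH^\circ/\Gamma)$, the image of a union of hyperplanes. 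In particular $\dim B_Z\leq \dim\cH-1$.

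Next, for each $b\in \cH/\Gamma$, I would establish the fiberwise bound $\dim(X_b\times_{Y_b}X_b)\leq \dim X_b$, i.e.\ that $\JH_b\colon X_b\to Y_b$ is semismall. By Remark~\ref{pi_X is a symp res}, $\pi_X\colon \widetilde X_h\to X_b$ is a symplectic resolution, hence proper, birational, and semismall. Via the identification $\widetilde X_h\cong \cN_{\theta^\delta,\bar h}(Q^\aux,\bv^\aux,\bd^\aux)$ from Proposition~\ref{X tilde as Nakajima quiv}, the composite $\widetilde X_h\to X_b\to Y_b$ would be matched, up to the trivial affine factor $\mathcal L$, with the affinization morphism of a Nakajima quiver variety, which is semismall by Nakajima's theorem. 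Then the general fact that if $g\circ f$ is semismall with $f$ proper and birational then $g$ is semismall gives the claim for $\JH_b$.

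Combining, any non-diagonal irreducible component of $X\times_Y X$ satisfies
\[\dim Z \leq \dim B_Z + \dim(X_b\times_{Y_b}X_b)\leq (\dim\cH-1)+(\dim X-\dim\cH) = \dim X-1,\]
whereas $\dim\Delta_X=\dim X$. Hence the dimension assertion holds, and smallness of $\JH$ follows. The main obstacle will be the second step: identifying $Y_b$ (a slice of the affine quotient $\cM_0(Q,\bv,\bd)$) precisely enough with the slice $\cM_{0,\bar h}(Q^\aux,\bv^\aux,\bd^\aux)$ of the Nakajima affine quotient, through the auxiliary-quiver Grothendieck--Springer construction of \S\ref{sec sym quiv vs nak quiv}, so that Nakajima's semismallness result applies to the composite $\widetilde X_h\to Y_b$.
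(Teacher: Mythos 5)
Your overall plan has the same two-stage shape as the paper's proof: first show that any non-diagonal component of $X\times_Y X$ sits over the complement of $\cH^\circ/\Gamma$ (gaining one codimension), then bound the fiber dimension by a semismallness argument coming from the Nakajima-quiver-variety picture. The first stage is essentially correct, though your inference needs a small repair: exhibiting one point $(x_1,x_2)\in Z$ with both $x_i\notin X^\circ$ does not by itself force the whole image of $Z$ to avoid $\cH^\circ/\Gamma$. The clean argument, which the paper uses, is that if $Z^\circ := Z\times_{\cH/\Gamma}\cH^\circ/\Gamma$ were nonempty, then $Z=\overline{Z^\circ}$ (as $Z$ is irreducible), and since $X^\circ\times_{Y^\circ}X^\circ=\Delta_{X^\circ}$ this would force $Z\subseteq\Delta_X$, a contradiction.

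The genuine gap is in the second stage, and it is exactly the one you flag at the end: for general $b\in\cH/\Gamma$ you would need to identify $Y_b$ with a well-behaved affine quiver variety so that Kaledin/Nakajima semismallness applies to $\widetilde X_h\to Y_b$. This identification is not available in the paper for $b\neq 0$, and it is not straightforward (one would have to control the affine quotient of the perturbed moment-map fiber, its normality, irreducibility, etc.). The paper sidesteps the whole issue with a $\bC^*$-contraction argument that you are missing: the scaling $\bC^*$-action on $R(Q,\bv,\bd)$ with weight one descends to $X$, $Y$, and $\cH/\Gamma$ (on the latter with strictly positive weights), contracts $Y$ to the image $\mathfrak o$ of the origin, and sends every point of $X$ to $X_0$ under $\lim_{t\to0}$ by properness of $\mathsf{JH}$. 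For an irreducible component $W$ of $X\times_Y X\times_{\cH/\Gamma}\{b\}$, taking $W'=\overline{\bC^*\cdot W}$ gives $\dim W\leqslant\dim W'_0\leqslant\dim(X_0\times_{Y_0}X_0)$, by upper semicontinuity of fiber dimension and the contraction. This reduces the estimate entirely to the fiber over $0$, where $Y_0\cong\cN_0(Q^\aux,\bv^\aux,\bd^\aux)$ and $\widetilde X_0\to Y_0$ is a genuine symplectic resolution, so Kaledin's semismallness gives $\dim(\widetilde X_0\times_{Y_0}\widetilde X_0)=\dim\widetilde X_0=\dim X_0$. Once you have the uniform bound $\dim(X_b\times_{Y_b}X_b)\leqslant\dim X_0$ for all $b$, your final dimension count goes through unchanged. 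So your plan reaches the right inequality, but you need to replace the proposed direct identification of $Y_b$ for arbitrary $b$ with the $\bC^*$-contraction reduction to $b=0$.
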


\begin{proof}
As we have seen in the proof of Lemma \ref{lem proper birational}, $\mathsf{JH}$ induces isomorphism $X^\circ\cong Y^\circ$; therefore 
$$X\times_Y X\times_{\cH/\Gamma} \cH^\circ/\Gamma\cong X^\circ\times_{Y^\circ} X^\circ= \Delta_{X^\circ}. $$ 
On the other hand, we claim that for any $b\in \cH/\Gamma$, the dimension of the fiber $X\times_Y X\times_{\cH/\Gamma} \{b\}$ is equal to $\dim X_b$, where $X_b:=\tau^{-1}(b)$. By the flatness of $\tau$ (Proposition \ref{prop of tau}), it is enough to show 
$$\dim X\times_Y X\times_{\cH/\Gamma} \{b\}\leqslant \dim X_0. $$ 
Consider the $\bC^*$ action on $R(Q,\mathbf v,\mathbf{d})$ that scales the whole vector space with weight $1$. This $\bC^*$ action commutes with $G$-action, and therefore it induces a $\bC^*$ action on the GIT quotient $X$ as well as on the affine quotient $Y$. Note that $\bC^*$ action on $Y$ contracts the latter to the unique fixed point $\mathfrak{o}$ which is the image of $\{0\}\in R(Q,\mathbf v,\mathbf{d})$. By the properness of $\mathsf{JH}\colon X\to Y$, $\forall\,\, x\in X$, $\lim_{t\to 0} t\cdot x$ exists and belongs to $\mathsf{JH}^{-1}(\mathfrak{o})$. Moreover $\bC^*$ acts on $\mathcal H/\Gamma$ with positive weights, such that $\tau\colon X\to \mathcal H/\Gamma$ is $\bC^*$-equivariant. This implies that $\lim_{t\to 0} t\cdot x\in X_0$. 

Take an arbitrary irreducible component $W$ of $X\times_Y X\times_{\cH/\Gamma} \{b\}$, let $W'$ be the closure or $\bC^*$ orbits of $W$ in $X\times_Y X$, i.e. $W'=\overline{\bC^*\cdot W}$. Then $W'_0:=W'\times_{\cH/\Gamma}\{0\}$ is nonempty by the above argument, and $\dim W'_0\geqslant\dim W$ as the former is a special fiber and the latter is the generic fiber. Since $Y_0:=\bar{\tau}^{-1}(0)$ is the affine Nakajima quiver variety $\cN_{0}(Q^\aux,\mathbf v^\aux,\mathbf{d}^\aux)=(\mu^{\aux})^{-1}(0)/\!\!/G^{\aux}$ and the composition $\mathsf{JH}\circ \pi_X\colon \widetilde{X}_0\to Y_0$ is identified with the Jordan-H\"older map of Nakajima quiver varieties:
\begin{align*}
\cN_{\theta^{\delta}}(Q^\aux,\mathbf v^\aux,\mathbf{d}^\aux)\to \cN_{0}(Q^\aux,\mathbf v^\aux,\mathbf{d}^\aux),
\end{align*}
we see that $\widetilde{X}_0\to Y_0$ is a symplectic resolution, which is semismall \cite{Kal}. Then the claim follows from
\begin{align*}
\dim W\leqslant\dim W'_0\leqslant \dim X_0\times_{Y_0} X_0\leqslant \dim \widetilde{X}_0\times_{Y_0} \widetilde{X}_0=\dim \widetilde{X}_0=\dim X_0\,.
\end{align*}
Let $Z\subset X\times_Y X$ be an irreducible component which is different from the diagonal $\Delta_X$. Suppose $Z^\circ:=Z\times_{\cH/\Gamma}\cH^\circ/\Gamma$ is nonempty, then $Z$ is the closure of $Z^\circ$. Since $X^\circ\times_{Y^\circ} X^\circ= \Delta_{X^\circ}$, $Z^\circ$ is contained in $\Delta_{X^\circ}$. This implies that $Z\subseteq \Delta_X$, and contradicts with the choice of $Z$. Thus $Z^\circ=\emptyset$. Then it follows from the above claim that $\dim Z\leqslant \dim X_b+\dim (\cH\setminus\cH^\circ)/\Gamma=\dim X-1$.
\end{proof}

The restriction of $\mathsf{JH}$ to a torus fixed component is also a birational and small morphism onto its image.

\begin{Proposition}\label{prop smallness fixed locus}
Suppose that $\sigma$ is a cocharacter of $\sA$, and $F\in \Fix_\sigma(X)$. Then the morphism $$\mathsf{JH}|_F\colon F\to Y$$ is birational onto its image. Moreover, all irreducible components of $F\times_Y F$ except the diagonal $\Delta_F$ have dimension strictly smaller than $\dim F$. In particular, $\dim F\times_Y F=\dim F$, and $\mathsf{JH}\colon F\to Y$ is small onto its image.
\end{Proposition}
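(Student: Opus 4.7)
The plan is to mirror the proof of Proposition~\ref{prop smallness}, exploiting that $F$ is itself a symmetric quiver variety (Lemma~\ref{fix pts of sym quiv var}), that $\tau|_F$ is flat (Proposition~\ref{tau at fixed loci is flat}), and that $\widetilde F_0$ is a Nakajima quiver variety (established inside the proof of Proposition~\ref{tau at fixed loci is flat}). Throughout I will reuse the same $\bC^*$-scaling on $R(Q,\bv,\bd)$ as in Proposition~\ref{prop smallness}; since it commutes with $\sA$, it preserves $F$, and it is compatible with the contracting $\bC^*$-actions on $Y$ and on $\cH/\Gamma$.

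To obtain the birationality of $\mathsf{JH}|_F$ onto its image, I note that $\widetilde F\cap \widetilde X^\circ$ is nonempty (this is used inside the proof of Proposition~\ref{tau at fixed loci is flat}, via smoothness of $\widetilde\tau|_{\widetilde F}$), and its image under $\pi_X$ lies in $F\cap X^\circ$. Since Lemma~\ref{lem proper birational} gives $X^\circ\cong Y^\circ$ via $\mathsf{JH}$, the restriction to $F\cap X^\circ$ is a closed immersion into $Y^\circ$, so $\mathsf{JH}|_F\colon F\to \mathsf{JH}(F)$ is birational. The identification $F\cap X^\circ\cong \mathsf{JH}(F\cap X^\circ)\subset Y^\circ$ also yields $F\times_Y F\times_{\cH/\Gamma}\cH^\circ/\Gamma=\Delta_{F\cap X^\circ}$, from which the same argument used at the end of Proposition~\ref{prop smallness} shows that every non-diagonal irreducible component $Z\subset F\times_Y F$ is supported over $(\cH\setminus\cH^\circ)/\Gamma$.

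The technical heart is the fiber bound $\dim F\times_Y F\times_{\cH/\Gamma}\{b\}\leqslant \dim F_0$ for every $b\in\cH/\Gamma$. The $\bC^*$-orbit closure argument from Proposition~\ref{prop smallness} reduces this to $\dim F_0\times_{Y_0} F_0\leqslant \dim F_0$. Since $\widetilde F_0\to F_0$ is proper and surjective (Proposition~\ref{tau at fixed loci is flat}), the induced map on fiber squares is surjective, so it suffices to show $\dim \widetilde F_0\times_{Y_0}\widetilde F_0\leqslant \dim \widetilde F_0$. As $\widetilde F_0$ is a Nakajima quiver variety, the affinization map $\widetilde F_0\to \Spec\Gamma(\widetilde F_0,\mathcal O)$ is a symplectic resolution, hence semismall by Kaledin. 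Applying Stein factorization to the proper morphism $\widetilde F_0\hookrightarrow \widetilde X_0\to Y_0$ identifies its finite factor with $\Spec\Gamma(\widetilde F_0,\mathcal O)\to Y_0$; thus $\widetilde F_0\to Y_0$ is a semismall map followed by a finite one. Using the ``$\dim\{b\colon \dim f^{-1}(b)\geqslant k\}\leqslant \dim A-2k$'' characterization, semismallness is preserved under composition with a finite morphism, giving the bound.

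Combining the above with $\dim (\cH\setminus\cH^\circ)/\Gamma=\dim \cH/\Gamma-1$ and the equidimensionality $\dim F_0=\dim F-\dim \cH/\Gamma$ from flatness of $\tau|_F$ yields $\dim Z\leqslant \dim F_0+(\dim \cH/\Gamma-1)=\dim F-1$ for every non-diagonal irreducible component, which gives smallness of $\mathsf{JH}|_F$ onto its image. The main obstacle is the last step of the third paragraph: one must verify that Stein factorization of $\widetilde F_0\hookrightarrow \widetilde X_0\to Y_0$ indeed produces $\Spec\Gamma(\widetilde F_0,\mathcal O)$ and that semismallness is stable under composition with a finite map. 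Both are standard once articulated, but deserve an explicit check with the fiber-dimension definition of semismallness.
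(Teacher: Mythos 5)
Your proof reaches the right conclusion but by a genuinely different route than the paper, and the difference matters for what has to be verified.

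For the birationality statement, your argument via $\widetilde F\cap\widetilde X^\circ\neq\emptyset$ is fine and essentially equivalent to the paper's shorter argument, which simply invokes flatness of $\tau|_F$ (Proposition~\ref{tau at fixed loci is flat}) to get $F^\circ\neq\emptyset$ and then composes with $X^\circ\cong Y^\circ$.

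For the dimension estimate, the paper does not argue semismallness of $\widetilde F_0\to Y_0$ at all. Instead it proves a bilateral dimension bound, Lemma~\ref{lem F1xF2 fiber}, and applies it with $F_1=F_2=F$:
\begin{align*}
\dim F\times_Y F\times_{\cH/\Gamma}\{b\}\leqslant \tfrac{1}{2}(\dim F+\dim F)-\dim \cH/\Gamma=\dim F_0.
\end{align*}
The proof of that lemma applies Kaledin's Lemma~2.9 \emph{to the symplectic resolution $\widetilde X_0\to Y_0$} (already established in the proof of Proposition~\ref{prop smallness}), restricted to an irreducible component $W\subset \widetilde F_{1,0}\times_{Y_0}\widetilde F_{2,0}\subset \widetilde X_0\times \widetilde X_0$; the two $2$-forms pulled back from a common open $U\subset Y_0$ must agree, so $W$ is isotropic in $(\widetilde F_{1,0}\times\widetilde F_{2,0},\ \Omega\boxminus\Omega)$. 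Crucially, this only uses properties of $\widetilde X_0\to Y_0$, never of $\widetilde F_0\to\Spec\Gamma(\widetilde F_0,\mathcal O)$.

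Your route instead Stein-factorizes $\widetilde F_0\to Y_0$ and asserts that the first factor, the affinization map of the Nakajima quiver variety $\widetilde F_0$, is a symplectic resolution, hence semismall. This is where there is a genuine gap, which you yourself flag as the ``main obstacle'' but do not close: birationality of the affinization map of an abstract Nakajima quiver variety $\cN_\theta(Q',\bv',\bd')$ is a real theorem (roughly, that the stable locus contains a dense open set of representations whose semisimplification map is injective, equivalently that the JH image contains a dense stratum where $\mathsf{JH}$ is an isomorphism), and normality of the affinization plus compatibility of symplectic forms is needed before Kaledin's semismallness theorem applies in the form you invoke. None of this is established in the paper for $\widetilde F_0$ (the paper only extracts irreducibility from the fact that $\widetilde F_0$ is a Nakajima quiver variety), and proving it would require importing additional results about affinizations of Nakajima quiver varieties. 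Your check that ``semismall composed with finite is semismall onto the image'' is correct, and the reduction $\dim F_0\times_{Y_0}F_0\leqslant\dim\widetilde F_0\times_{Y_0}\widetilde F_0=\dim\widetilde F_0=\dim F_0$ is sound once the semismallness claim is in place; but as written the key claim is left unverified. The paper's Lemma~\ref{lem F1xF2 fiber} is precisely designed to sidestep that by working with the big resolution $\widetilde X_0\to Y_0$, and I would recommend adopting that route.
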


\begin{proof}
By the flatness of $\tau|_F\colon F\to \calH/\Gamma$ (Proposition \ref{tau at fixed loci is flat}), $F^\circ:=F\times_{\calH/\Gamma} \calH^{\circ}/\Gamma$ is nonempty. Since $\mathsf{JH}\colon X^\circ\to Y^\circ$ is an isomorphism, $F^\circ$ is mapped isomorphically onto its image in $Y^\circ$. In particular, $\mathsf{JH}|_F\colon F\to Y$ is birational onto its image. 

Let $K\subset F\times_Y F$ be an irreducible component which is different from the diagonal $\Delta_F$. The same argument as in the proof of Proposition \ref{prop smallness} shows that $K\times_{\cH/\Gamma}\cH^{\circ}/\Gamma$ is empty. Then we have
\begin{align*}
\dim K&\leqslant\dim \cH/\Gamma-1+\sup_{b\in \cH/\Gamma}\dim F\times_Y F\times_{\cH/\Gamma}\{b\}\\
{\text{\scriptsize by \eqref{F1xF2 fiber dim bound}}}\;&\leqslant \dim \cH/\Gamma-1+\frac{1}{2}(\dim F+\dim F)-\dim \cH/\Gamma\\
&=\dim F-1.\qedhere  
\end{align*}
\end{proof}

\subsection{Some dimension estimates}

Next we give several estimates on the dimensions of intersections of torus fixed loci of symmetric quiver varieties. They will be used in \cite{COZZ2}.
\begin{Lemma}\label{lem F1xF2 fiber}
Suppose that $\sigma$ is a cocharacter of $\sA$, and $F_1,F_2\in \Fix_\sigma(X)$. Then for any $b\in \cH/\Gamma$,
\begin{align}\label{F1xF2 fiber dim bound}
\dim F_1\times_Y F_2\times_{\cH/\Gamma}\{b\}\leqslant \frac{1}{2}(\dim F_1+\dim F_2)-\dim \cH/\Gamma.
\end{align}
\end{Lemma}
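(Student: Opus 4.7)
The plan is to first reduce to the central fiber $b=0$ by a $\bC^*$-contraction, exactly parallel to the one used in the proof of Proposition \ref{prop smallness}, and then lift the problem along the symplectic resolution $\pi_X\colon \widetilde{X}_0\to Y_0$ to the Nakajima quiver variety $\widetilde{X}_0=\cN_{\theta^\delta}(Q^\aux,\bv^\aux,\bd^\aux)$, where I can invoke the Lagrangian structure of the Steinberg-type variety that comes from semismallness.

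For the reduction, I would use the scaling $\bC^*$-action on $R(Q,\bv,\bd)$ (weight one on the whole vector space), which commutes with $G$ and with the linear $\sA$-action, hence descends to $X$ preserving every $\sigma$-fixed locus $F_i$, and makes $\JH$ and $\tau$ equivariant with strictly positive weights on $Y$ and on $\cH/\Gamma$. For any irreducible component $W$ of $F_1\times_Y F_2\times_{\cH/\Gamma}\{b\}$, the closure $W':=\overline{\bC^*\cdot W}\subset F_1\times_Y F_2$ has non-empty central fiber $W'_0$ by the properness of $\JH$, with $\dim W'_0\geqslant \dim W$. This reduces the problem to the case $b=0$, where the fiber coincides with $F_{1,0}\times_{Y_0} F_{2,0}$ (writing $F_{i,0}:=(\tau|_{F_i})^{-1}(0)$ and $Y_0:=\bar{\tau}^{-1}(0)$).

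Next I would invoke Proposition \ref{tau at fixed loci is flat}: each $F_i$ is the image of a $\sigma$-fixed component $\widetilde{F}_i\subset \widetilde{X}$ under the proper, surjective, generically finite morphism $\pi_X$, with $\widetilde{F}_{i,0}$ isomorphic to a $\sigma$-fixed component of $\widetilde{X}_0$, and $\JH\circ \pi_X$ agreeing with the Jordan-H\"older map of the Nakajima quiver variety (as observed in the proof of Proposition \ref{prop smallness}). The induced map $\widetilde{F}_{1,0}\times_{Y_0}\widetilde{F}_{2,0}\to F_{1,0}\times_{Y_0}F_{2,0}$ is therefore surjective, so it suffices to bound the dimension upstairs; moreover the flatness of $\tau|_{F_i}$ gives $\dim F_{i,0}=\dim F_i-\dim \cH/\Gamma$ and $\dim \widetilde{F}_{i,0}=\dim F_{i,0}$.

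The crux of the argument is a Lagrangian-style estimate on the Nakajima side. By the semismallness of the symplectic resolution $\widetilde{X}_0\to Y_0$ due to \cite{Kal}, every irreducible component of the Steinberg-type variety $\widetilde{X}_0\times_{Y_0}\widetilde{X}_0$ is a Lagrangian subvariety of $(\widetilde{X}_0\times \widetilde{X}_0,\,\omega\ominus\omega)$. Since the $\sA$-action preserves the symplectic form (Example \ref{extended nak quiv}) and $\sigma$ is a cocharacter of $\sA$, the $\sigma$-fixed subvariety of such a Lagrangian is isotropic in the corresponding connected component of $(\widetilde{X}_0\times \widetilde{X}_0)^\sigma$ (immediate from $\omega(u,v)=0$ for $u,v$ in the $\sigma$-fixed tangent space at a smooth point, then extended by taking closures). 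Decomposing $(\widetilde{X}_0\times \widetilde{X}_0)^\sigma$ according to pairs of fixed components and restricting to $(\widetilde{F}_{1,0},\widetilde{F}_{2,0})$ yields
\[
\dim \widetilde{F}_{1,0}\times_{Y_0}\widetilde{F}_{2,0}\;\leqslant\;\tfrac{1}{2}\bigl(\dim \widetilde{F}_{1,0}+\dim \widetilde{F}_{2,0}\bigr),
\]
and concatenating all the inequalities above delivers \eqref{F1xF2 fiber dim bound}. The main subtlety will be the isotropy statement just invoked; once that is in place, the remainder is a routine assembly of results already established in the text.
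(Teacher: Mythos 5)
Your proposal is essentially correct and follows the same reduction-and-lifting scheme as the paper (reduce to the central fiber by the $\bC^*$-contraction, pass to $\widetilde{F}_i$ dominating $F_i$, and estimate $\dim\widetilde{F}_{1,0}\times_{Y_0}\widetilde{F}_{2,0}$ by an isotropy argument). The point at which you diverge is the isotropy step. The paper applies Kaledin's two-form lemma \cite[Lem.~2.9]{Kal} directly to an irreducible component $W$ of $\widetilde{F}_{1,0}\times_{Y_0}\widetilde{F}_{2,0}$: the two projections $\eta_1,\eta_2\colon W\to\widetilde{X}_0$ factor through the same $q\colon W\to Y_0$, so after shrinking to a smooth open subset $\eta_i^*\Omega$ are both pullbacks of a single canonically defined two-form on the image, hence equal; this gives $\Omega_{12}|_W=0$ with no limiting argument. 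You instead use two ingredients: (a) the full Steinberg variety $\widetilde{X}_0\times_{Y_0}\widetilde{X}_0$ has isotropic irreducible components, and (b) the $\sigma$-fixed locus of an isotropic subvariety is isotropic inside the fixed component, which you justify by a Whitney-stratification/closure argument. This is a genuinely different organization (and it is the argument the paper uses elsewhere, in Lemma~\ref{lem dim bound}), so it is a valid alternative. Two small caveats on your version: first, \cite{Kal} gives semismallness, i.e.\ $\dim \widetilde{X}_0\times_{Y_0}\widetilde{X}_0\le\dim\widetilde{X}_0$, but that alone is strictly weaker than the isotropy you need; you must invoke the isotropy of the Steinberg variety (which is what \cite[Lem.~2.9]{Kal} yields, or can be cited from the literature on symplectic resolutions), since semismallness bounds the fiber product by $\dim\widetilde{X}_0$ rather than by $\tfrac12(\dim\widetilde{F}_{1,0}+\dim\widetilde{F}_{2,0})$. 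Second, your phrase ``immediate \dots then extended by taking closures'' glosses over the fact that $\sigma$-fixed points of an isotropic $L$ may lie entirely in the singular locus of $L$, so the extension really does require a $\sigma$-equivariant Whitney stratification with the fixed locus a union of strata; once stated carefully this works, and the paper spells out exactly this argument in the proof of Lemma~\ref{lem dim bound}. With those precisions your route buys a conceptually transparent two-step picture (Lagrangian Steinberg variety, then fixed-locus isotropy), while the paper's route is shorter and avoids the limiting argument by using \cite[Lem.~2.9]{Kal} head-on.
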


\begin{proof}
By the $\bC^*$ contraction argument in the proof of Proposition \ref{prop smallness}, 
$$\dim F_1\times_Y F_2\times_{\cH/\Gamma}\{b\}\leqslant \dim F_1\times_Y F_2\times_{\cH/\Gamma}\{0\}, $$ 
so it is enough to prove \eqref{F1xF2 fiber dim bound} for $b=0$.

For $i=1,2$, let $\widetilde{F}_i$ be the $\sigma$-fixed component of $\widetilde{X}$ that dominates $F_i$ (see the proof of Proposition \ref{tau at fixed loci is flat}). Since $\widetilde{F}_i\to F_i$ is generically finite, we have $\dim \widetilde{F}_i=\dim F_i$. Define $F_{i,0}:=F_i\times_{\cH/\Gamma}\{0\}$ and $\widetilde{F}_{i,0}:=\widetilde{F}_i\times_{\cH/\Gamma}\{0\}$ for $i=1,2$. There is a naturally induced proper surjective morphism $\widetilde{F}_{1,0}\times_{Y_0} \widetilde{F}_{2,0}\to {F}_{1,0}\times_{Y_0} {F}_{2,0}$, so $$\dim {F}_{1,0}\times_{Y_0} {F}_{2,0}\leqslant \dim \widetilde{F}_{1,0}\times_{Y_0} \widetilde{F}_{2,0}. $$ 
Note that $\sigma$ preserves the symplectic form $\Omega$ on the Nakajima quiver variety $\widetilde{X}_0=\cN_{\theta^\delta}(Q^\aux,\mathbf v^\aux,\mathbf{d}^\aux)$, so the restriction of $\Omega$ to $\sigma$-fixed components $\widetilde{F}_{1,0}$ and $\widetilde{F}_{2,0}$ are nondegenerate. Let us equip $\widetilde{F}_{1,0}\times \widetilde{F}_{2,0}$ with the symplectic structure $\Omega_{12}:=\Omega|_{\widetilde{F}_{1,0}}\boxminus \Omega|_{\widetilde{F}_{2,0}}$. Then we claim that the subvariety $\widetilde{F}_{1,0}\times_{Y_0} \widetilde{F}_{2,0}\subset \widetilde{F}_{1,0}\times \widetilde{F}_{2,0}$ is isotropic with respect to $\Omega_{12}$. As we have seen in the proof of Proposition \ref{prop smallness}, $\mathsf{JH}\circ\pi_X\colon \widetilde{X}_0\to Y_0$ is a symplectic resolution. Let $W$ be an arbitrary irreducible component of $\widetilde{F}_{1,0}\times_{Y_0} \widetilde{F}_{2,0}$ with reduced scheme structure. Let $\eta_1$ be the composition $W\to \widetilde{F}_{1,0}\to \widetilde{X}_0$, and let $\eta_2$ be the composition $W\to \widetilde{F}_{2,0}\to \widetilde{X}_0$, then $$\mathsf{JH}\circ\pi_X\circ \eta_1=\mathsf{JH}\circ\pi_X\circ \eta_2.$$ 
Denote $V$ to be the image of $q\colon W\to Y_0$ where $q=\mathsf{JH}\circ\pi_X\circ \eta_1$, and we shall still denote the morphism $W\to V$ by $q$. By \cite[Lem.~2.9]{Kal}, we can replace $V$ and $W$ by open subvarieties $U$ and $Z$ respectively, such that $U$ and $Z$ are smooth and $q\colon Z\to U$ is smooth, and $\eta_1^*\Omega=q^*\Omega_{U,1}$ and $\eta_2^*\Omega=q^*\Omega_{U,2}$, for some $\Omega_{U,1},\Omega_{U,2}\in\Omega^2(U)$. By the construction in the proof of \cite[Lem.~2.9]{Kal}, for $i=1,2$, $\Omega_{U,i}$ only depends on the morphism $U\to Y_0$ and does not depend on $\eta_i$, so we have $\Omega_{U,1}=\Omega_{U,2}$. It follows that the restriction of $\Omega_{12}$ to $Z$ equals to $\eta_1^*\Omega-\eta_2^*\Omega=0$. This implies that $W$ is isotropic, and therefore $\widetilde{F}_{1,0}\times_{Y_0} \widetilde{F}_{2,0}$ is isotropic.

By the isotropic property, $\dim \widetilde{F}_{1,0}\times_{Y_0} \widetilde{F}_{2,0}\leqslant\frac{1}{2} \dim\widetilde{F}_{1,0}\times \widetilde{F}_{2,0}$. Thus,
\begin{align*}
\dim {F}_{1,0}\times_{Y_0} {F}_{2,0}\leqslant \dim \widetilde{F}_{1,0}\times_{Y_0} \widetilde{F}_{2,0}&\leqslant\frac{1}{2} \dim\widetilde{F}_{1,0}\times \widetilde{F}_{2,0}\\
{\text{\scriptsize (by the smoothness of $\widetilde{F}_{i,0}\to \cH$)}}\;&=\frac{1}{2} (\dim\widetilde{F}_{1}+\dim \widetilde{F}_{2})-\dim \cH\\
&=\frac{1}{2} (\dim{F}_{1}+\dim {F}_{2})-\dim \cH/\Gamma.\qedhere  
\end{align*}
\end{proof}

\begin{Proposition}\label{prop F1xF2 cap Z}
Suppose that $\sigma$ is a cocharacter of $\sA$, and $F_1,F_2\in \Fix_\sigma(X)$. Let $Z\subset X\times_Y X$ be an irreducible component which is different from the diagonal $\Delta_X$. Then 
\begin{align}
\dim (F_1\times F_2)\cap Z\leqslant \frac{1}{2}(\dim F_1+\dim F_2)-1.
\end{align}
\end{Proposition}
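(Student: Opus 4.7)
The plan is to mimic the end of the proof of Proposition \ref{prop smallness}, but now applied to the intersection $(F_1\times F_2)\cap Z$: combine the discriminant obstruction (non-diagonal components of $X\times_Y X$ lie over the proper closed subvariety $D:=\cH/\Gamma\setminus \cH^\circ/\Gamma$) with the fiberwise bound of Lemma \ref{lem F1xF2 fiber}, and conclude by adding dimensions along the projection to $\cH/\Gamma$.

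First I would note that the two compositions $X\times_Y X\xrightarrow{\pr_i}X\xrightarrow{\tau}\cH/\Gamma$ coincide, since $\tau=\bar\tau\circ\mathsf{JH}$ factors through $Y$ by Lemma \ref{lem proper birational}. Using the same lemma, $\mathsf{JH}\colon X^\circ\xrightarrow{\sim}Y^\circ$, so the preimage of $\cH^\circ/\Gamma$ in $X\times_Y X$ is exactly $\Delta_{X^\circ}$. Since $Z$ is an irreducible component distinct from $\Delta_X$, it follows that $Z$ lies in the preimage of $D$; in particular, so does every irreducible component $W$ of $(F_1\times F_2)\cap Z$.

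Next, let $B\subseteq D$ denote the closure of the image of $W$ in $\cH/\Gamma$. Because $D\subsetneq \cH/\Gamma$ is closed and $\cH/\Gamma$ is irreducible of dimension $\dim\cH/\Gamma$, one has $\dim B\leqslant \dim\cH/\Gamma-1$. For each $b\in B$ the fiber $W\times_{\cH/\Gamma}\{b\}$ sits inside $F_1\times_Y F_2\times_{\cH/\Gamma}\{b\}$, whose dimension is at most $\tfrac12(\dim F_1+\dim F_2)-\dim\cH/\Gamma$ by Lemma \ref{lem F1xF2 fiber}. Combining this with the standard bound $\dim W\leqslant \dim B+\sup_{b\in B}\dim W\times_{\cH/\Gamma}\{b\}$ for the dominant morphism $W\to B$ from an irreducible scheme yields
$$\dim W\leqslant (\dim\cH/\Gamma-1)+\tfrac12(\dim F_1+\dim F_2)-\dim\cH/\Gamma=\tfrac12(\dim F_1+\dim F_2)-1,$$
which is the desired estimate.

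There is essentially no obstacle: the argument is a direct assembly of three ingredients already in place, namely the discriminant localization of non-diagonal components (extracted from the proof of Proposition \ref{prop smallness}), the isotropic/symplectic fiberwise inequality of Lemma \ref{lem F1xF2 fiber}, and the usual relative dimension inequality. The only point that deserves a line of verification is that the projection $(F_1\times F_2)\cap Z\to \cH/\Gamma$ is unambiguous, which is guaranteed by $\tau=\bar\tau\circ\mathsf{JH}$ as noted above.
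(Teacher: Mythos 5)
Your proposal is correct and follows essentially the same route as the paper's proof: bound the fiber dimension over each $b\in\cH/\Gamma$ by Lemma \ref{lem F1xF2 fiber}, observe that the non-diagonal component $Z$ lies over the proper closed complement of $\cH^\circ/\Gamma$ (already established in the proof of Proposition \ref{prop smallness}), and add the two contributions along the projection to $\cH/\Gamma$. The only cosmetic difference is that you re-derive the vanishing of $Z\times_{\cH/\Gamma}\cH^\circ/\Gamma$ from scratch where the paper simply cites the earlier proof.
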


\begin{proof}
Let $T$ be an irreducible component of $(F_1\times F_2)\cap Z$ with maximal dimension and denote $T_b:=T\times_{\cH/\Gamma}\{b\}$ for any $b\in \cH/\Gamma$. Since $T_b\subseteq F_1\times_Y F_2\times_{\cH/\Gamma}\{b\}$, we have $\dim T_b\leqslant \frac{1}{2}(\dim F_1+\dim F_2)-\dim \cH/\Gamma$ by Lemma \ref{lem F1xF2 fiber}. In the proof of Proposition \ref{prop smallness} we have shown that $Z\times_{\cH/\Gamma}\cH^\circ/\Gamma$ is empty, so
\begin{equation*}
\dim (F_1\times F_2)\cap Z\leqslant \dim \cH/\Gamma-1+\sup_{b\in \cH/\Gamma} \dim T_b\leqslant \frac{1}{2}(\dim F_1+\dim F_2)-1.\qedhere  
\end{equation*}
\end{proof}

\begin{Remark}\label{rmk F1xF2 distinct}
If $F_1\neq F_2$, then $(F_1\times F_2)\cap \Delta_X=\emptyset$, and it follows from Proposition \ref{prop F1xF2 cap Z} that
\begin{align}
    \dim F_1\times_Y F_2\leqslant \frac{1}{2}(\dim F_1+\dim F_2)-1.
\end{align}
\end{Remark}

\section{Stable envelope correspondences for symmetric quiver varieties}
\subsection{Definitions}
We say that a cocharacter $\sigma \colon \bC^* \to \sA$ is \emph{generic} if $X^{\sigma} = X^\sA$. Let us recall the wall-and-chamber structure on $\Lie(\sA)_\bR$ described in \cite{COZZ1}, such that $\sigma$ is generic if and only if it lies in a chamber. 

The \textit{torus roots} are the set of $\sA$-weights $\{\alpha\}$ occurring in the normal bundle to $X^\sA$. 
A connected component of the complement of union of (finite) root hyperplanes is called a \textit{chamber}, i.e.
\begin{align*}
    \Lie(\sA)_\bR\bigg{\backslash} \bigcup_{\alpha\in \text{roots}}\alpha^{\perp}=\bigsqcup_j \fC_j,
\end{align*}
where $\fC_j$ are chambers.

For an algebraic variety $M$ with an $\sA$-action, and a cocharacter $\sigma$, let $S$ be a subset of $M^\sigma$, the \emph{attracting set} is 
$$
\Attr_{\sigma} (S) := \{ x\in M \mid \lim_{t\to 0} \sigma (t) \cdot x \in S\}.
$$
Let $\fC$ be a chamber as above, we define
\begin{align*}
    \Attr_{\fC} (S) :=\Attr_{\xi} (S) 
\end{align*}
for a subset $S\subseteq  X^\sA$ and $\xi\in \fC$. Note that the definition does not depend on the choice of $\xi$. 

Let $F \in \Fix_\sA(X)$ be a connected component of the $\sA$-fixed locus, then 
$\Attr_\fC (F)$ is a locally closed subscheme in $X$, and the attraction map 
$$\Attr_\fC (F)\to F, \quad x\mapsto \lim_{t\to 0}\xi(t)\cdot x, $$ 
is an \textit{affine fibration} by the result of Bialynicki--Birula \cite{BB}.

The normal bundle of $F$ in $X$ decomposes into $\sA$-eigen sub-bundles:
\begin{align*}
    N_{F/X}=\bigoplus_{\alpha} N_{F/X}^{\alpha},
\end{align*}
such that $\sA$ acts on $N_{F/X}^{\alpha}$ with weight $\alpha$. Define
\begin{align*}
    N_{F/X}^{+}=\bigoplus_{\alpha(\xi)>0}N_{F/X}^{\alpha}\,,\quad N_{F/X}^{-}=\bigoplus_{\alpha(\xi)<0}N_{F/X}^{\alpha}\,,
\end{align*}
for some $\xi\in \fC$ (equivalently, for all $\xi\in \fC$).

Consider a partial order on $\Fix_\sA(X)$ which is the transitive closure of the following relation:
\begin{align}\label{flow order}
    F_i \preceq F_j \quad \text{if} \quad F_j \cap  \overline{\Attr_\fC (F_i)}\neq \emptyset.
\end{align}
The \emph{full attracting set} is defined as
$$
\Attr_\fC^f (F) := \bigcup_{F \preceq F'} \Attr_\fC (F'). 
$$
We denote by $\Attr_\fC^f$ the smallest $\sA$-invariant closed subset of $X\times X^\sA$ such that $\Attr_\fC^f$ contains the diagonal $\Delta\subseteq  X^\sA\times X^\sA$ and 
\begin{align*}
    (x',y)\in \Attr_\fC^f\text{ and }\lim_{t\to 0} \sigma (t) \cdot x=x' \text{ implies }(x,y)\in \Attr_\fC^f.
\end{align*}
By definition, $\Attr_\fC^f$ is a subset of $\bigcup_{F \in \Fix_\sA(X)} \Attr_\fC^f (F) \times F \subseteq  X \times X^\sA$.
Note also that $\Attr^f_\fC\cap\, (F\times F)=\Delta_F$.

\begin{Definition}\label{stab corr_coh}
Fix a chamber $\fC$ as above. A \textit{stable envelope correspondence} is a $\sT$-equivariant Borel-Moore homology class  supported on $\Attr^f_\fC$: 
$$[\Stab_{\fC}]\in H^\sT(X\times X^\sA)_{\Attr^f_\fC}$$ which satisfies the following two axioms:
\begin{enumerate}[(i)]
\setlength{\parskip}{1ex}

\item For any fixed component $F \in \Fix_\sA(X)$, $[\Stab_{\fC }]\big|_{F\times F}  = e^\sT (N_{F / X}^-) \cdot [\Delta_F]$ for diagonal $\Delta_F\subseteq  F\times F$;

\item For any $F'\neq F$, the inequality $\deg_\sA [\Stab_{\fC }] \big|_{F'\times F} < \deg_\sA e^\sT (N_{F' / X}^-)$ holds.

\end{enumerate}
\end{Definition}
\begin{Remark}\label{rmk on stab corr}
(i)~It follows from the definition that stable envelope correspondence is unique if it exists \cite[Prop.~3.21]{COZZ1}.
(ii)~As noted in \cite[Prop.~3.31]{COZZ1}, for any $\sT$-invariant regular function $\sw\colon X\to \C$, by applying the canonical map \cite[Eqn.~(2.8)]{COZZ1}:
\begin{equation}\label{th can map}\can\colon H^\sT(X\times X^\sA)_{\Attr^f_\fC}\to H^\sT(X\times X^\sA,\sw\boxminus \sw)_{\Attr^f_\fC}\end{equation}
to the stable envelope correspondence $[\Stab_{\fC }]$ and using convolutions, we get the critical stable envelope 
$$\Stab_{\fC }\colon H^\sT(X^\sA,\sw)\to  H^\sT(X,\sw). $$
\end{Remark}
\subsection{Existence of stable envelope correspondence}

\begin{Theorem}\label{thm stab corr}
Let $\overline{\Attr}_\fC(\Delta_F)$ be the closure of the attracting set of the diagonal $\Delta_F\subset F\times F$ in $X\times F$. Then 
$\sum_{F\in \Fix_\sA(X)}[\overline{\Attr}_\fC(\Delta_F)]$ is a stable envelope correspondence.
\end{Theorem}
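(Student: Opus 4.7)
The plan is to verify the two axioms of Definition~\ref{stab corr_coh} for the candidate class $C := \sum_{F \in \Fix_\sA(X)} [\overline{\Attr}_\fC(\Delta_F)]$. The support condition $C \in H^\sT(X \times X^\sA)_{\Attr^f_\fC}$ is immediate: each $\Attr_\fC(\Delta_F)$ is obtained from the piece $\Delta_F \subset \Delta$ of the diagonal by applying the attracting procedure in the first factor, so the universal property defining $\Attr^f_\fC$ forces $\overline{\Attr}_\fC(\Delta_F) \subseteq \Attr^f_\fC$.

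For axiom (i), I restrict to $F \times F$. Only the summand indexed by $F$ contributes, since $\overline{\Attr}_\fC(\Delta_{F''}) \subset X \times F''$ misses $X \times F$ for $F'' \neq F$. The subvariety $\Attr_\fC(\Delta_F) \subset X \times F$ is the graph of the limit map $\Attr_\fC(F) \to F$, hence smooth of dimension $\dim F + \mathrm{rk}\,N^+_{F/X}$, and meets $F \times F$ set-theoretically in $\Delta_F$. At a diagonal point $(f,f)$, a direct tangent calculation gives $T\Attr_\fC(\Delta_F) + T(F \times F) = (T_f F \oplus N^+_{F/X,f}) \oplus T_f F$ inside $T_f X \oplus T_f F$, so the excess normal bundle along $\Delta_F$ is $N^-_{F/X}$. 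The excess intersection formula then yields $[\overline{\Attr}_\fC(\Delta_F)]|_{F \times F} = e^\sT(N^-_{F/X}) \cdot [\Delta_F]$, as required.

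The main obstacle is axiom (ii). For $F' \neq F$, only $[\overline{\Attr}_\fC(\Delta_F)]$ has nontrivial restriction to $F' \times F$. The crucial input is the support inclusion $\overline{\Attr}_\fC(\Delta_F) \cap (F' \times F) \subseteq F' \times_Y F$, taken over the affinization $Y = \cM_0(Q,\mathbf v,\mathbf d)$ from Lemma~\ref{lem proper birational}. To prove it, approximate a point $(x,f)$ on the left by $(x_n, f_n) \in \Attr_\fC(\Delta_F)$ with $f_n = \lim_{t \to 0} \sigma(t)\, x_n$. Since $x \in F' \subseteq X^\sA$ forces $\mathsf{JH}(x) \in Y^\sA$, the $\sA$-equivariance of $\mathsf{JH}$ together with the regularity of the Bialynicki--Birula contraction on its domain inside the affine variety $Y$ gives $\mathsf{JH}(f) = \lim_n \mathsf{JH}(f_n) = \lim_n \lim_{t \to 0} \sigma(t)\, \mathsf{JH}(x_n) = \lim_{t \to 0} \sigma(t)\, \mathsf{JH}(x) = \mathsf{JH}(x)$, so $(x,f) \in F' \times_Y F$. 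Remark~\ref{rmk F1xF2 distinct} then bounds the support dimension by $\tfrac{1}{2}(\dim F' + \dim F) - 1$.

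It remains to convert this support bound into the polynomial $\sA$-degree inequality. By Poincaré duality on the smooth variety $F' \times F$, the restriction $[\overline{\Attr}_\fC(\Delta_F)]|_{F' \times F}$ sits in cohomological degree $2(\dim F + \mathrm{rk}\,N^+_{F/X})$; a class of cohomological degree $2m$ supported in codimension at least $c_0$ has $\sA$-polynomial degree at most $m - c_0$. With $c_0 \geq \tfrac{1}{2}(\dim F' + \dim F) + 1$ from the support bound and the self-duality identity $\dim X - \dim F' = 2\,\mathrm{rk}\,N^+_{F'/X}$, the quantity $m - c_0$ simplifies to $\mathrm{rk}\,N^+_{F'/X} - 1$, strictly smaller than $\deg_\sA e^\sT(N^-_{F'/X}) = \mathrm{rk}\,N^-_{F'/X} = \mathrm{rk}\,N^+_{F'/X}$. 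The most delicate step will be the commutation $\lim_n \lim_t = \lim_t \lim_n$ used to establish the support inclusion, which rests on the regularity of the contraction map on its domain in $Y$; everything else is direct bookkeeping based on Theorem~\ref{main thm} and the dimension estimates of \S\ref{subsec smallness}.
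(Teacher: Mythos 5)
Your proof is correct, and it takes a genuinely different route through axiom~(ii) than the paper does. The paper proves the dimension bound directly as Lemma~\ref{lem dim bound}: it lifts the attracting cycle to the Grothendieck--Springer resolution $\widetilde X$, observes that $\overline{\Attr}_\fC(\Delta_{\widetilde F})_0$ is isotropic for the symplectic form on $\widetilde X_0\times\widetilde F_0$, and uses the $\cH^\circ$-Cartesian structure to propagate the fiberwise bound to a global one. You instead prove the support inclusion
$\overline{\Attr}_\fC(\Delta_F)\cap(F'\times F)\subseteq F'\times_Y F$
over the affinization $Y=\cM_0(Q,\bv,\bd)$, and then invoke Remark~\ref{rmk F1xF2 distinct}, which gives exactly the same bound $\frac{1}{2}(\dim F+\dim F')-1$. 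The commutation of limits you flag as delicate is in fact safe for the reason you give: since $Y$ is affine, the $\fC$-attracting subset $\cA_Y\subseteq Y$ is closed and the attraction map $\mathsf a\colon\cA_Y\to Y^\sA$ is a regular morphism, so from $f_n=\mathsf a^X(x_n)$ and $\sA$-equivariance of $\mathsf{JH}$ one gets $\mathsf{JH}(f_n)=\mathsf a(\mathsf{JH}(x_n))$, and continuity of $\mathsf a$ on the closed set $\cA_Y$ passes to the limit without any illegal interchange.

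Two observations on what each route buys. Your route is more economical at the point of use---it reduces axiom~(ii) to a black-box application of the already-proved dimension estimates in \S\ref{subsec smallness}---but those estimates (via Proposition~\ref{prop F1xF2 cap Z} and Lemma~\ref{lem F1xF2 fiber}) themselves depend on the same isotropy-in-$\widetilde X_0$ machinery, so neither route avoids the underlying geometry. Your support inclusion also anticipates the sheaf-theoretic viewpoint of \S\ref{sect on conn to stab}--\S\ref{sect on per on sqv}, where $\overline{\Attr}_\fC(\Delta_F)\cap(F'\times F)\subseteq \mathsf Z_\cA\cap(F'\times F)=F'\times_Y F$ appears in Lemma~\ref{lem top homology}; in that sense you are using later structure of the paper earlier, while the paper keeps Theorem~\ref{thm stab corr} self-contained inside \S\,4. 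The axiom~(i) part of your argument, via the excess normal bundle of the BB graph along $\Delta_F$, is the standard tangent computation that the paper declares ``obvious''; it is correct, and matches the $e^\sT(N^-_{F/X})\cdot[\Delta_F]$ output. The final degree bookkeeping is an equivalent repackaging of the paper's inequality $\deg_\sA\leq \dim(\text{actual support})-(\text{expected dimension})$, and both yield $\deg_\sA\leq\operatorname{rk}N^-_{F'/X}-1<\operatorname{rk}N^-_{F'/X}$.
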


\begin{proof}
It suffices to show that for every $F\in \Fix_\sA(X)$, $[\overline{\Attr}_\fC(\Delta_F)]$ satisfies axioms (i) and (ii) in Definition \ref{stab corr_coh}. The axiom (i) is obvious. For axiom (ii), the class $(F'\times F\hookrightarrow X\times F)^*[\overline{\Attr}_\fC(\Delta_F)]$ is supported on the subvariety $\overline{\Attr}_\fC(\Delta_F)\cap (F'\times F)$ whose dimension is strictly smaller than $\frac{1}{2}(\dim F+\dim F')$ by Lemma \ref{lem dim bound} below. Therefore
\begin{align*}
    \deg_\sA(F'\times F\hookrightarrow X\times F)^*[\overline{\Attr}_\fC(\Delta_F)]&\leqslant \dim \overline{\Attr}_\fC(\Delta_F)\cap (F'\times F)-\left(\dim \overline{\Attr}_\fC(\Delta_F)+\dim (F'\times F)-\dim (X\times F)\right)\\
    &<\frac{1}{2}(\dim F+\dim F')-\left(\dim \overline{\Attr}_\fC(\Delta_F)+\dim (F'\times F)-\dim (X\times F)\right)\\
    &=\rk N^-_{F'/X}=\deg_\sA e^\sT (N_{F' / X}^-).
\end{align*}
This verifies the axiom (ii).
\end{proof}

In the proof of Theorem \ref{thm stab corr}, we have used the following.

\begin{Lemma}\label{lem dim bound}
Let $F,F'\in \Fix_\sA(X)$ be two \textit{distinct} fixed components. Then 
\begin{align*}
\dim \overline{\Attr}_\fC(\Delta_F)\cap (F'\times F)<\frac{1}{2}(\dim F+\dim F').
\end{align*}
\end{Lemma}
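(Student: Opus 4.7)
The plan is to reduce the estimate to the fiber product bound already established. Let $\JH\colon X\to Y=\cM_0(Q,\bv,\bd)$ be the affinization morphism of Lemma \ref{lem proper birational}. The key inclusion I will prove is
\[
\overline{\Attr}_\fC(\Delta_F)\cap (F'\times F)\subseteq F'\times_Y F.
\]

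The inclusion is proved by combining $\sA$-equivariance of $\JH$ (which follows because the $\sA$-action on $R(Q,\bv,\bd)$ commutes with the gauge action) with the Bialynicki--Birula contraction $\mathrm{BB}^Y_\sigma\colon \Attr^Y_\sigma \to Y^\sigma$ on $Y$. For points $(x',y')\in \Attr_\fC(\Delta_F)$ we have $y'=\lim_{t\to 0}\sigma(t)\cdot x'$, hence $\JH(y')=\mathrm{BB}^Y_\sigma(\JH(x'))$; in particular $\JH(x')$ lies in $\Attr^Y_\sigma$. Now fix $(x,y)\in \overline{\Attr}_\fC(\Delta_F)\cap (F'\times F)$ and choose an irreducible curve $C\subseteq \overline{\Attr}_\fC(\Delta_F)$ through $(x,y)$ whose generic point belongs to $\Attr_\fC(\Delta_F)$. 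Because $x\in F'\subseteq X^\sA$, the point $\JH(x)$ lies in $Y^\sA\subseteq \Attr^Y_\sigma$ and is fixed by $\mathrm{BB}^Y_\sigma$. Passing to the limit along $C$ and using that both $\JH$ and $\mathrm{BB}^Y_\sigma$ are continuous yields $\JH(y)=\mathrm{BB}^Y_\sigma(\JH(x))=\JH(x)$.

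With the inclusion in hand, the desired estimate is immediate from Remark \ref{rmk F1xF2 distinct}: since $F'\ne F$, we have $\dim F'\times_Y F\leqslant \tfrac{1}{2}(\dim F+\dim F')-1$, and thus
\[
\dim\overline{\Attr}_\fC(\Delta_F)\cap (F'\times F)\leqslant \dim F'\times_Y F<\tfrac{1}{2}(\dim F+\dim F').
\]
The main delicate point is justifying continuity of the Bialynicki--Birula contraction at the fixed point $\JH(x)\in Y^\sA$ on the singular affine variety $Y$. One handles this by linearizing the $\sA$-action on the coordinate ring $\bC[Y]$, after which $\Attr^Y_\sigma$ is cut out by the vanishing of generators of negative $\sigma$-weight and $\mathrm{BB}^Y_\sigma$ is realized as the algebra homomorphism killing positive-weight generators.
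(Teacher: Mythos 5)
Your proof is correct, and it takes a genuinely different route from the paper's. The paper proves the bound by lifting to the universal deformation $\widetilde{X}$, showing that $\overline{\Attr}_\fC(\Delta_{\widetilde{F}})_0$ is isotropic for the symplectic form $\Omega\boxminus\Omega|_{\widetilde F_0}$ on $\widetilde X_0\times\widetilde F_0$ (using continuity of the two-form $\omega\boxminus\omega|_{\widetilde F}$ to the closure), intersecting with $\widetilde F'_0\times\widetilde F_0$ via \cite[Lem.~3.4.1]{MO}, and then running a dimension count over $\cH$. You instead establish the set-theoretic inclusion $\overline{\Attr}_\fC(\Delta_F)\cap (F'\times F)\subseteq F'\times_Y F$ and read off the bound from Remark \ref{rmk F1xF2 distinct}. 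This inclusion is clean and correct: the locus in $\overline{\Attr}_\fC(\Delta_F)$ where $\mathsf a\circ\JH\circ\pr_1=\JH\circ\pr_2$ is closed (the Bialynicki--Birula map $\mathsf a\colon\cA_Y\to Y^\sA$ is a morphism, as you indicate, by $\sA$-equivariantly embedding the affine $Y$ into a linear representation), and it contains the dense subset $\Attr_\fC(\Delta_F)$; then $\JH(x)\in Y^\sA$ forces $\mathsf a(\JH(x))=\JH(x)$. In fact your curve argument is dispensable in favor of this ``closed locus'' phrasing, which is cleaner. Your inclusion is essentially the observation that $\overline{\Attr}_\fC(\Delta_F)\subseteq\mathsf Z_\cA$ for the Steinberg-type variety $\mathsf Z_\cA=\cA_X\times_{Y^\sA}X^\sA$ the paper introduces only later in \S5; so you have noticed that this structural fact, combined with Remark \ref{rmk F1xF2 distinct}, already gives the estimate. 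The trade-off is that Remark \ref{rmk F1xF2 distinct} itself rests on Lemma \ref{lem F1xF2 fiber}, which carries out a closely related isotropy argument in the deformation family, so the total logical depth is comparable; but your route avoids re-running the Whitney-stratification/continuity argument specifically for the attracting set, and makes the relationship with the Jordan--H\"older fiber product explicit at this earlier stage.
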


\begin{proof}
Let $\widetilde{F}',\widetilde{F}\in \Fix_\sA(\widetilde{X})$ be connected components of $\widetilde{X}^\sA$ such that $\pi_X|_{\widetilde{F}}\colon \widetilde{F}\to F$ and $\pi_X|_{\widetilde{F}'}\colon \widetilde{F}'\to F'$ are proper, surjective, and generically finite, their existence are shown in the proof of Proposition \ref{tau at fixed loci is flat}. Then we have an induced proper and surjective map 
\begin{align*}
    \overline{\Attr}_\fC(\Delta_{\widetilde{F}})\cap (\widetilde{F}'\times \widetilde{F})\to \overline{\Attr}_\fC(\Delta_F)\cap (F'\times F)
\end{align*}
where $\overline{\Attr}_\fC(\Delta_{\widetilde{F}})$ is the closure of the attracting set of the diagonal $\Delta_{\widetilde{F}}\subset \widetilde{F}\times \widetilde{F}$ in $\widetilde{X}\times \widetilde{F}$. In particular, we have
\begin{align*}
\dim \overline{\Attr}_\fC(\Delta_F)\cap (F'\times F)\leqslant \dim \overline{\Attr}_\fC(\Delta_{\widetilde{F}})\cap (\widetilde{F}'\times \widetilde{F}).
\end{align*}
Let $Z$ be an irreducible component of $\overline{\Attr}_\fC(\Delta_{\widetilde{F}})\cap (\widetilde{F}'\times \widetilde{F})$. For a morphism $M\to \cH$, define $M^\circ=M\times_{\cH}\cH^\circ$ and $M_h=M\times_{\cH}\{h\}$ for $h\in \cH$. Since $\widetilde{X}^\circ$ is an affine variety (see Example \ref{extended nak quiv}), ${\Attr}_\fC(\Delta_{\widetilde{F}^\circ})$ is closed in $\widetilde{X}^\circ\times \widetilde{F}^\circ$. It follows that $\overline{\Attr}_\fC(\Delta_{\widetilde{F}})\cap (\widetilde{F}'^\circ\times \widetilde{F}^\circ)=\emptyset$, in particular, $Z^\circ=\emptyset$. 

We claim that $\overline{\Attr}_\fC(\Delta_{\widetilde{F}})_0$ is an isotropic subvariety of $\widetilde{X}_0\times \widetilde{F}_0$, where the latter is endowed with the symplectic form $\Omega'=\Omega\boxminus \Omega|_{\widetilde{F}_0}$ ($\Omega$ is the symplectic form on the Nakajima quiver variety $\widetilde{X}_0=\cN_{\theta^\delta}(Q^\aux,\bv^\aux,\bd^\aux)$). Consider the two-form $\omega\in \Omega^2(\widetilde{X})$ constricted in Example \ref{extended nak quiv}, it has the property that $\omega|_{\widetilde{X}_0}=\Omega$. Define $$\omega'=\omega\boxminus \omega|_{\widetilde{F}}\in \Omega^2(\widetilde{X}\times \widetilde{F}). $$ 
It is elementary to see that the restriction of $\omega'$ to ${\Attr}_{\fC}(\Delta_{\widetilde{F}})$ vanishes. So the restriction of $\omega'$ to smooth locus of $\overline{\Attr}_\fC(\Delta_{\widetilde{F}})$ vanishes by continuity. Let $W$ be an irreducible component of $\overline{\Attr}_\fC(\Delta_{\widetilde{F}})_0={\Attr}_{\fC}(\Delta_{\widetilde{F}})\cap (\widetilde{X}_0\times \widetilde{F}_0)$. For a general point $w\in W$, there exists a sequence of points $x_1,x_2,\cdots$ in the smooth locus of $\overline{\Attr}_\fC(\Delta_{\widetilde{F}})$ approaching $w$ such that limit of $T_{x_i}\overline{\Attr}_\fC(\Delta_{\widetilde{F}})$ exists as $i\to \infty$ and
contains the tangent space $T_w W$ \footnote{This can be seen by choosing a Whitney stratification of $\overline{\Attr}_\fC(\Delta_{\widetilde{F}})$ for which $\overline{\Attr}_\fC(\Delta_{\widetilde{F}})_0$ is a union of strata.}. Since the restriction of $\omega'$ to $T_{x_i}\overline{\Attr}_\fC(\Delta_{\widetilde{F}})$ vanishes, our claim follows.

Then $\overline{\Attr}_\fC(\Delta_{\widetilde{F}})\cap (\widetilde{F}'_0\times \widetilde{F}_0)$ is an isotropic subvariety of $\widetilde{F}'_0\times \widetilde{F}_0$ by \cite[Lem.~3.4.1]{MO}. And therefore, 
\begin{align*}
\dim Z\leqslant\dim \cH-1+\sup_{h\in \cH}\dim Z_h&\leqslant \dim \cH-1+\dim Z_0\\
&\leqslant\dim \cH-1+\dim \overline{\Attr}_\fC(\Delta_{\widetilde{F}})\cap (\widetilde{F}'_0\times \widetilde{F}_0)\\
&\leqslant\dim \cH-1+\frac{1}{2}(\dim \widetilde{F}_0+\dim \widetilde{F}'_0)\\
{\text{\scriptsize (by the smoothness of $\widetilde{F}'\to \cH$ and $\widetilde{F}\to \cH$)}}\;&=\frac{1}{2}(\dim \widetilde{F}+\dim \widetilde{F}')-1\\
&=\frac{1}{2}(\dim F+\dim F')-1\qedhere  
\end{align*}
\end{proof}

\section{Sheaf theoretic analysis of critical stable envelopes}
A connection between stable envelopes for symplectic resolutions and hyperbolic restrictions is studied by Nakajima \cite[\S 5]{Nak1}. In this section, we extend it to a connection between  critical stable envelopes of symmetric quiver varieties with potentials and hyperbolic restrictions. As an application, we give another proof of the \textit{triangle lemma} \cite[Thm.~4.16]{COZZ1} for symmetric quiver varieties using the associativity of hyperbolic restrictions. 

\subsection{A Steinberg type variety and its homology}

Consider  the Jordan-H\"older morphism in \S \ref{subsec smallness}: 
$$\mathsf{JH}\colon X=\cM_\theta(Q,\bv,\bd)\to \cM_0(Q,\bv,\bd)=Y. $$ 
Let $\fC\subset \Lie(\sA)_\bR$ be a chamber, $\sigma$ be a generic cocharacter in $\fC$, and 
$$\cA_{Y}=\{y\in Y\colon \lim_{t\to 0}\sigma(t)\cdot y\text{ exists}\}$$ be the attracting subvariety of $Y$, with the attraction map and the closed immersion:
\begin{equation}\label{incl and att map}\mathsf a\colon \cA_Y\to Y^\sA, \quad i\colon \cA_{Y}\hookrightarrow Y. \end{equation}
Note that $\cA_{Y}$ is a closed subvariety of $Y$ since $Y$ is affine.  

Let $\D^b_c(-)$ denote the bounded derived category of constructible sheaves and consider  the \textit{hyperbolic restriction} functor \cite{Br}: 
\begin{equation}\label{hy loc fun}\Res^Y_{Y^\sA}=\mathsf{a}_*i^!\colon \D^b_c(Y)\to \D^b_c(Y^\sA). \end{equation}
Define the closed subvariety
\begin{align}
\cA_{X}:=X\times_{Y}\cA_{Y}\subset X.
\end{align}
Note that $\cA_{X}$ is stratified by attracting subvarieties of fixed components on $X$, in particular,
\begin{align}\label{strata of A_X}
\cA_{X}\xlongequal{\text{as set}}\bigcup_{F\in \Fix_\sA(X)}\Attr_\fC(F).
\end{align}
By composing the projection $\cA_{X}\to \cA_{Y}$ and attraction map \eqref{incl and att map}, we get a map $\cA_{X}\to Y^\sA$.
Using this, we define 
\begin{align}
\mathsf Z_{\cA}:=\cA_{X}\times_{Y^\sA} X^{\sA}\subset X \times X^{\sA}.
\end{align}
Let $\C_M$ denote the constant sheaf of a variety $M$. If $M$ is smooth and connected, let $$\cC_M:=\C_M[\dim M]$$ be the shifted constant perverse sheaf; for smooth and disconnected $M$ we define $\cC_M$ to be component-wise the shifted constant perverse sheaf.
\begin{Proposition}[cf. {\cite[Lem.~4]{Nak2}}]\label{prop homology of Steinberg like}
We have a natural isomorphism
\begin{align}\label{homology of Steinberg like}
H_{*}(\mathsf Z_{\cA})\cong \Ext^*_{\D^b_c(Y^\sA)}(\mathsf{JH}^\sA_*\,\cC_{X^\sA}, \Res^Y_{Y^\sA}\mathsf{JH}_*\,\cC_X),
\end{align}
where $H_{*}(\mathsf Z_{\cA})$ is the Borel-Moore homology of $\mathsf Z_{\cA}$, $\mathsf{JH}^\sA=\mathsf{JH}|_{X^\sA}$,
$\Res^Y_{Y^\sA}$ is the hyperbolic restriction functor \eqref{hy loc fun}.
\end{Proposition}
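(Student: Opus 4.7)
The plan is to derive \eqref{homology of Steinberg like} by a sequence of adjunctions and base-change isomorphisms in the six-functor formalism, essentially mimicking the standard identification of Borel--Moore homology of a Steinberg variety with an $\Ext$ group between pushforwards (as in Chriss--Ginzburg). Introduce the shorthand $f := \mathsf{JH}$, $g := \mathsf{JH}^\sA$, and let $j\colon \cA_X \hookrightarrow X$ be the closed immersion (the pull-back of $i$ along $f$), $f_\cA := f|_{\cA_X}\colon \cA_X \to \cA_Y$, and $\alpha := \mathsf a \circ f_\cA \colon \cA_X \to Y^\sA$, so that $\mathsf Z_\cA = \cA_X\times_{Y^\sA}X^\sA$ with projections $\pi_1,\pi_2$. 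By Lemma \ref{lem proper birational}, $f$ is proper; consequently $g$ and $f_\cA$ are proper as well, being restrictions to closed subvarieties of their source.

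First I would simplify $\Res^Y_{Y^\sA} f_*\cC_X$. The Cartesian square
\[
\xymatrix{\cA_X \ar[r]^{j} \ar[d]_{f_\cA} & X \ar[d]^{f} \\ \cA_Y \ar[r]^{i} & Y}
\]
combined with the properness of $f$ yields the base-change isomorphism $i^! f_* \cong f_{\cA\,*} j^!$ (obtained as the Verdier-dual of proper base change $i^* f_! \cong f_{\cA\,!} j^*$). Since $X$ is smooth and $j$ is a closed immersion, $j^!\cC_X \cong \omega_{\cA_X}[-\dim_\bC X]$. Hence, up to an overall degree shift,
\[
\Res^Y_{Y^\sA} f_*\cC_X \;=\; \mathsf a_* i^! f_*\cC_X \;\cong\; \mathsf a_* f_{\cA\,*} j^!\cC_X \;\cong\; \alpha_*\omega_{\cA_X}.
\]

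Next I would identify the right-hand side of \eqref{homology of Steinberg like} with $H_*(\mathsf Z_\cA)$ by three successive adjunctions/base-change steps. Applying $(g_*, g^!)$ adjunction (valid since $g$ is proper, so $g_*=g_!$), followed by the base-change $g^! \alpha_* \cong \pi_{2*} \pi_1^!$ for the defining Cartesian square of $\mathsf Z_\cA$ (again derived from the standard $g^* \alpha_! \cong \pi_{2!}\pi_1^*$ by Verdier duality, noting this requires no properness of $\alpha$), together with $\pi_1^!\omega_{\cA_X} \cong \omega_{\mathsf Z_\cA}$, gives
\[
\Ext^*_{Y^\sA}(g_*\cC_{X^\sA},\alpha_*\omega_{\cA_X}) \;\cong\; \Ext^*_{X^\sA}(\cC_{X^\sA},\pi_{2*}\omega_{\mathsf Z_\cA}).
\]
A final application of the $(\pi_2^*, \pi_{2*})$ adjunction converts this into $\Ext^*_{\mathsf Z_\cA}(\bC_{\mathsf Z_\cA}, \omega_{\mathsf Z_\cA})$ up to a shift, which is $H_*(\mathsf Z_\cA)$.

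The only non-formal inputs are the properness of $\mathsf{JH}$ (Lemma \ref{lem proper birational}) and, through it, of $\mathsf{JH}^\sA$ and $f_\cA$, together with the smoothness of $X$. Accordingly, there is no serious geometric obstacle; the only thing to be careful about is the bookkeeping of cohomological degree shifts stemming from the normalization $\cC_{(-)} = \bC_{(-)}[\dim_\bC(-)]$ on both $X$ and $X^\sA$ and the identity $H^{BM}_k(M) = \Ext^{-k}(\bC_M,\omega_M)$; once these are collected the shifts cancel pairwise and the isomorphism of \eqref{homology of Steinberg like} holds in the asserted grading, naturally in $\cA_X$ and $X^\sA$.
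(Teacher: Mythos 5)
Your argument is correct, and it is essentially the standard Chriss--Ginzburg style adjunction/base-change argument that the paper references via \cite[Lem.~4]{Nak2} rather than proves from scratch. The one minor remark is that the base-change isomorphisms $i^!f_*\cong f_{\cA*}j^!$ and $g^!\alpha_*\cong\pi_{2*}\pi_1^!$ (Verdier duals of proper base change for $f_!$ and $\alpha_!$) hold for arbitrary morphisms and do not actually require the properness of $\mathsf{JH}$; properness is only needed to pass from $g_*$ to $g_!$ in the adjunction step, and your degree bookkeeping, once carried out, reproduces exactly the grading $H_{\dim X+\dim F-i}$ stated after the Proposition.
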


For a connected component $F\subset X^\sA$, \eqref{homology of Steinberg like} reads
\begin{align*}
H_{\dim X+\dim F-i}(\mathsf Z_{\cA}\cap (X\times F))\cong \Ext^i_{\D^b_c(Y^\sA)}(\mathsf{JH}^\sA_*\,\cC_{F}, \Res^Y_{Y^\sA}\mathsf{JH}_*\,\cC_X).
\end{align*}
In particular, we have
\begin{align}\label{top homology of Steinberg like}
H_{\dim X+\dim F}(\mathsf Z_{\cA}\cap (X\times F))\cong \Hom_{\D^b_c(Y^\sA)}(\mathsf{JH}^\sA_*\,\cC_{F}, \Res^Y_{Y^\sA}\mathsf{JH}_*\,\cC_X).
\end{align}

\begin{Lemma}\label{lem top homology}
$\mathsf Z_{\cA}\cap (X\times F)$ contains an irreducible component $\overline{\Attr}_\fC(\Delta_F)$. Moreover,
\begin{align*}
\dim \mathsf Z_{\cA}\cap (X\times F)=\dim \overline{\Attr}_\fC(\Delta_F)=\frac{1}{2}\left(\dim X+\dim F\right),
\end{align*}
and $\overline{\Attr}_\fC(\Delta_F)$ is the unique top dimensional component of $\mathsf Z_{\cA}\cap (X\times F)$. In particular, we have
\begin{align}\label{top homology}
H_{\dim X+\dim F}(\mathsf Z_{\cA}\cap (X\times F))=\bC\cdot[\overline{\Attr}_\fC(\Delta_F)]\,.
\end{align}
\end{Lemma}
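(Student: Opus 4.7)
The plan is to decompose $\mathsf Z_{\cA}\cap (X\times F)$ set-theoretically according to the stratification of $\cA_X$ by attracting sets, then bound the dimension of each piece using the results of \S\ref{subsec smallness} and the self-dual structure of $\sA$-weights on the normal bundles.

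First I would use that $\cA_X = X\times_Y\cA_Y$ together with the properness of $\mathsf{JH}$ (Lemma \ref{lem proper birational}) to conclude via the valuative criterion that a point $x\in X$ lies in $\cA_X$ iff $\lim_{t\to 0}\sigma(t)\cdot x$ exists in $X$, so $\cA_X$ is the disjoint union (as sets) of the attracting strata $\Attr_\fC(F')$ for $F'\in\Fix_\sA(X)$, as in \eqref{strata of A_X}. Since $F,F'\subset X^\sA$ are both closed in $Y^\sA$ under $\mathsf{JH}^\sA$, the fibered products $F'\times_{Y^\sA} F$ and $F'\times_Y F$ coincide. Using the composition $\Attr_\fC(F')\xrightarrow{\mathsf a_X} F'\xrightarrow{\mathsf{JH}} Y^\sA$, I would rewrite
\begin{equation*}
\mathsf Z_\cA\cap(X\times F) \;=\; \bigsqcup_{F'\in\Fix_\sA(X)} \Attr_\fC(F')\times_{F'} \bigl(F'\times_{Y} F\bigr),
\end{equation*}
so each piece has dimension $\rk N^+_{F'/X} + \dim(F'\times_{Y} F)$ because the Bia{\l}ynicki-Birula map $\mathsf a_X\colon \Attr_\fC(F')\to F'$ is an affine fibration of relative dimension $\rk N^+_{F'/X}$.

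Next, I would invoke the self-duality of the $\sA$-action from Lemma \ref{self-dual torus} (and inherited on fixed components via Lemma \ref{fix pts of sym quiv var}) to conclude that the positive and negative parts of the normal bundle have equal rank, hence $\rk N^+_{F'/X}=\tfrac{1}{2}(\dim X-\dim F')$. Now dimension bounds split into two cases. For $F'\neq F$, Remark \ref{rmk F1xF2 distinct} gives $\dim (F'\times_Y F)\leqslant \tfrac{1}{2}(\dim F'+\dim F)-1$, yielding
\begin{equation*}
\dim \bigl(\Attr_\fC(F')\times_{F'}(F'\times_Y F)\bigr) \leqslant \tfrac{1}{2}(\dim X+\dim F)-1.
\end{equation*}
For $F'=F$, Proposition \ref{prop smallness fixed locus} says every irreducible component of $F\times_Y F$ other than the diagonal $\Delta_F$ has dimension strictly below $\dim F$, while $\dim \Delta_F=\dim F$. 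So within the $F'=F$ piece, the unique top-dimensional component is $\Attr_\fC(F)\times_F \Delta_F$, which under the identification $\Delta_F\cong F$ is exactly the graph of $\mathsf a_X\colon \Attr_\fC(F)\to F$, i.e.\ $\Attr_\fC(\Delta_F)$; its closure in $X\times F$ is $\overline{\Attr}_\fC(\Delta_F)$, with dimension $\rk N^+_{F/X}+\dim F=\tfrac{1}{2}(\dim X+\dim F)$.

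Putting these together, $\overline{\Attr}_\fC(\Delta_F)$ is the unique irreducible component of $\mathsf Z_\cA\cap(X\times F)$ of top dimension $\tfrac{1}{2}(\dim X+\dim F)$, and the identification \eqref{top homology} of the top Borel-Moore homology with $\bC\cdot[\overline{\Attr}_\fC(\Delta_F)]$ follows from the standard fact that $H_{2\dim Z}^{\mathrm{BM}}(Z)$ is freely generated by fundamental classes of top-dimensional irreducible components. The main obstacle I anticipate is ensuring that the set-theoretic decomposition above is adequate for the Borel-Moore dimension count (it is, since $H^{\mathrm{BM}}_{\mathrm{top}}$ only sees top-dimensional components and the stratification by attractors is locally closed), and that the self-duality hypothesis is correctly inherited by each fixed component $F'$ to justify the equality $\rk N^+_{F'/X}=\rk N^-_{F'/X}$ used crucially in both cases.
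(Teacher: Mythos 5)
Your proof is correct, but it follows a genuinely different route from the paper's. The paper proves the lemma by working fiberwise over $\cH/\Gamma$: it observes that over $\cH^\circ/\Gamma$ the affinization $\mathsf{JH}$ is an isomorphism, so $\mathsf Z_\cA^\circ\cap(X\times F)=\Attr_\fC(\Delta_{F^\circ})$, and any other irreducible component $W$ therefore sits over the boundary of $\cH^\circ/\Gamma$; it then bounds $\dim W$ using the $\bC^*$-contraction argument together with the fact (Nakajima, \cite[Prop.~4.5.2]{Nak1}) that the Steinberg-type variety $\widetilde{\mathsf Z}_{\cA,0}\cap(\widetilde X_0\times\widetilde F_0)$ is Lagrangian in the symplectic Nakajima quiver variety $\widetilde X_0\times\widetilde F_0$. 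You instead decompose $\mathsf Z_\cA\cap(X\times F)$ set-theoretically along the Bia{\l}ynicki--Birula stratification of $\cA_X$, compute each stratum's dimension as $\rk N^+_{F'/X}+\dim(F'\times_Y F)$, and conclude using the already-established bounds from Proposition \ref{prop smallness fixed locus} (for $F'=F$) and Remark \ref{rmk F1xF2 distinct} (for $F'\neq F$), together with the self-duality identity $\rk N^+_{F'/X}=\tfrac{1}{2}(\dim X-\dim F')$. Both proofs ultimately rely on the semismallness statements of Theorem \ref{main thm}, but yours routes everything through the dimension estimates of \S\ref{subsec smallness} rather than directly invoking the Lagrangian structure on $\widetilde X_0$; this is a bit more modular and arguably cleaner, at the cost of leaning on the self-duality of $N_{F'/X}$ as an $\sA$-representation (which the paper's proof also uses implicitly in asserting $\dim\overline{\Attr}_\fC(\Delta_F)=\tfrac12(\dim X+\dim F)$, but which deserves a sentence of justification: since $R(Q,\bv,\bd)$ is self-dual as a $(G\times\sA)$-representation and the subspace $\mathfrak g\cdot r$ carries the self-dual adjoint $\sA$-action via the lift $\phi$, the quotient $T_{[r]}X$ has equal weight-space dimensions for $\pm\alpha$, so the normal bundle is indeed $\sA$-self-dual).
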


\begin{proof}
Obviously $\mathsf Z_{\cA}\cap (X\times F)$ contains $\overline{\Attr}_\fC(\Delta_F)$, and the dimension of the latter is $\frac{1}{2}\left(\dim X+\dim F\right)$. Let $\mathsf Z_{\cA}^\circ=\mathsf Z_{\cA}\times_{\cH/\Gamma} \cH^\circ/\Gamma$, which is open in $\mathsf Z_{\cA}$. Since $\mathsf{JH}\colon X^\circ\to Y^\circ$ is an isomorphism, we have
\begin{align*}
\mathsf Z_{\cA}^\circ\cap (X\times F)=\Attr_\fC(\Delta_{F^\circ}).
\end{align*}
In particular, $\overline{\Attr}_\fC(\Delta_F)$ contains an open subset of $\mathsf Z_{\cA}\cap (X\times F)$, so $\overline{\Attr}_\fC(\Delta_F)$ is an irreducible component of $\mathsf Z_{\cA}\cap (X\times F)$.

Let $W$ be an irreducible component of $\mathsf Z_{\cA}\cap (X\times F)$ which is different from $\overline{\Attr}_\fC(\Delta_F)$. Let $W^\circ=W\times_{\cH/\Gamma} \cH^\circ/\Gamma$ and $W_b=W\times_{\cH/\Gamma}\{b\}$ for $b\in \cH/\Gamma$. Then the same argument in the proof of Proposition \ref{prop smallness} shows that $W^\circ$ is empty and $W_0$ is nonempty, so we have
\begin{align*}
\dim W\leqslant \dim \cH/\Gamma-1+\sup_{b\in \cH/\Gamma}\dim W_b\leqslant \dim \cH/\Gamma-1+\dim W_0\,.
\end{align*}
Let ${\mathsf Z}_{\cA,0}={\mathsf Z}_{\cA}\times_{\cH/\Gamma}\{0\}$. Define $\cA_{\widetilde{X}_0}=\widetilde{X}_0\times_{Y}\cA_{Y}\subset \widetilde{X}_0$, and $\widetilde{\mathsf Z}_{\cA,0}=\cA_{\widetilde{X}_0}\times_{Y^\sA_0} \widetilde{X}_0^{\sA}$. Then there is a natural proper and surjective morphism $\widetilde{\mathsf Z}_{\cA,0}\to {\mathsf Z}_{\cA,0}$. Let $\widetilde{F}\in \Fix_\sA(\widetilde{X})$ be a fixed component that dominates $F$ in the proof of Proposition \ref{tau at fixed loci is flat}. Then $\widetilde{\mathsf Z}_{\cA,0}\cap (\widetilde{X}_0\times \widetilde{F}_0)$ is a Lagrangian subvariety in $\widetilde{X}_0\times \widetilde{F}_0$ by \cite[Prop.~4.5.2]{Nak1}. In particular, $\dim\widetilde{\mathsf Z}_{\cA,0}\cap (\widetilde{X}_0\times \widetilde{F}_0)=\frac{1}{2}\dim \widetilde{X}_0\times \widetilde{F}_0$. It follows that
\begin{align}
\nonumber
\dim W\leqslant \dim \cH/\Gamma-1+\dim {\mathsf Z}_{\cA,0}\cap (X_0\times F_0)&\leqslant \dim \cH/\Gamma-1+\dim\widetilde{\mathsf Z}_{\cA,0}\cap (\widetilde{X}_0\times \widetilde{F}_0)\\ \nonumber 
&=\dim \cH/\Gamma-1+\frac{1}{2}\dim \widetilde{X}_0\times \widetilde{F}_0\\ \nonumber
{\text{\scriptsize (by the smoothness of $\widetilde{X}\to \cH$ and $\widetilde{F}\to \cH$)}}\;&=\frac{1}{2}\left(\dim \widetilde X+\dim \widetilde F\right)-1\\ \nonumber
&=\frac{1}{2}\left(\dim X+\dim F\right)-1 \qedhere
\end{align}
\end{proof}

\subsection{Connections with critical stable envelopes}\label{sect on conn to stab}
The construction of $\cA_Y$, $\cA_X$, and $\mathsf Z_{\cA}$ are $\sT$-equivariant, so  
\begin{align*}
H^\sT_{\dim X+\dim F}(\mathsf Z_{\cA}\cap (X\times F))\cong \Hom_{\D^b_{c,\sT}(Y^\sA)}(\mathsf{JH}^\sA_*\,\cC_{F}, \Res^Y_{Y^\sA}\mathsf{JH}_*\,\cC_X),
\end{align*}
where $\D^b_{c,\sT}(Y^\sA)$ denotes the derived category of $\sT$-equivariant constructible complexes. 
By Lemma \ref{lem top homology}, we have
\begin{align*}
H^\sT_{\dim X+\dim F}(\mathsf Z_{\cA}\cap (X\times F))=\bC\cdot[\overline{\Attr}_\fC(\Delta_F)]\,.
\end{align*}
In particular, the fundamental class $[\overline{\Attr}_\fC(\Delta_F)]$ gives a canonical map
\begin{align}\label{attr induce map}
\sS_F\in \Hom_{\D^b_{c,\sT}(Y^\sA)}(\mathsf{JH}^\sA_*\,\cC_{F}, \Res^Y_{Y^\sA}\mathsf{JH}_*\,\cC_X).
\end{align}
Now let $\sw\colon X\to \bA^1$ be a $\sT$-invariant regular function. Then it restricts to a $\sT$-invariant regular function on $X^\sA$, and descends to $\sT$-invariant regular functions on $Y$ and $Y^\sA$. We will abuse the notation and let $\sw$ denote all of them. Let the vanishing cycle functor associated to $\sw$ be $\varphi_\sw$. Since vanishing cycle functor commutes with proper pushforward and hyperbolic restriction, e.g.~\cite[Prop.~5.4.1]{Nak1}, the map \eqref{attr induce map} induces a map
\begin{align}\label{attr induce map_crit}
\varphi_\sw(\sS_F)\colon \mathsf{JH}^\sA_*\,\varphi_\sw\,\cC_{F}\cong \varphi_\sw\,\mathsf{JH}^\sA_*\,\cC_{F}\xrightarrow{\varphi_\sw(\sS_F)}\varphi_\sw\Res^Y_{Y^\sA}\mathsf{JH}_*\,\cC_X\cong \Res^Y_{Y^\sA}\mathsf{JH}_*\,\varphi_\sw\,\cC_X.
\end{align}
By taking $\sT$-equivariant hypercohomologies on two sides of \eqref{attr induce map_crit}, we obtain
\begin{gather*}
H_\sT(F,\varphi_\sw\,\cC_{F})\cong H_\sT(Y^\sA,\mathsf{JH}^\sA_*\varphi_\sw\,\cC_{F})\xrightarrow{H_\sT(Y^\sA,-)\circ \varphi_\sw(\sS_F)} H_\sT(Y^\sA,\Res^Y_{Y^\sA}\mathsf{JH}_*\,\varphi_\sw\,\cC_X)\cong H_\sT(\cA_Y,i^!\mathsf{JH}_*\,\varphi_\sw\,\cC_X)\\
\xrightarrow[i_*=i_!]{i_!i^!\to \id}H_\sT(Y,\mathsf{JH}_*\,\varphi_\sw\,\cC_X)\cong H_\sT(X,\varphi_\sw\,\cC_{X}),
\end{gather*}
which is the same as the critical convolution map induced by $\can([\overline{\Attr}_\fC(\Delta_F)])\in H^{\sT}(X\times F,\sw\boxminus\sw)_{\mathsf Z_\cA\cap (X\times F)}$, where $\can$ is the canonical map \eqref{th can map}. 
To summarize, we get the critical stable envelope $$\Stab_{\fC}\colon H^\sT(F,\sw)\to H^\sT(X,\sw). \footnote{By definition \cite[(2.3)]{COZZ1}, $H^\sT_i(X,\sw)=H^{-i}_\sT(X,\varphi_\sw\,\omega_X)$, where $\omega_X=\cC_X[\dim X]$ is the dualizing complex on $X$. Then the degree shift in the stable envelope map $H^\sT_i(F,\sw)\to H^\sT_{i+\dim X-\dim F}(X,\sw)$ is exactly accounted by the degree shifts in $\omega_X=\cC_X[\dim X]$ and $\omega_F=\cC_F[\dim F]$.} $$



\subsection{Perverse sheaves on affine symmetric quiver varieties}\label{sect on per on sqv}

Let $\IC_Y$ denote the intersection cohomology complex associated with the trivial rank one local system on a Zariski open dense subset of $Y=\cM_0(Q,\bv,\bd)$ (which can be chosen to be $Y^\circ$). Since $\mathsf{JH}$ is proper birational and small (Theorem \ref{main thm}), there is a natural isomorphism: 
$$\mathsf{JH}_*\,\cC_X\cong \IC_Y. $$ 
Similarly, $\mathsf{JH}^\sA$ is proper birational and small by Proposition \ref{prop smallness fixed locus}, so 
$$\mathsf{JH}^\sA_*\,\cC_{X^\sA}\cong \IC_{Y^\sA}. $$ 
Then, \eqref{top homology of Steinberg like} can be rewritten as
\begin{align*}
H_{\mathrm{top}}(\mathsf Z_{\cA})\cong \Hom_{\D_c^b(Y^\sA)}(\IC_{Y^\sA}, \Res^Y_{Y^\sA}\IC_Y),
\end{align*}
where $H_{\mathrm{top}}(\mathsf Z_{\cA})=\bigoplus_{F\in \Fix_\sA(X)}H_{\dim X+\dim F}(\mathsf Z_{\cA}\cap (X\times F))$ is the component-wise top degree homology.

For a smooth $\sA$-variety $V$ with an $\sA$-equivariant isomorphism $T_V\cong T^*_V$ between tangent and cotangent bundles, we have $\dim \Attr_{\fC}(F)=\frac{1}{2}\left(\dim V+\dim F\right)$ for any fixed component $F$. It follows that there is a natural isomorphism
\begin{align}\label{hyp res isom}
   \Res^V_F\cC_V=(\Attr_{\fC}(F)\to F)_*(\Attr_{\fC}(F)\to V)^!\cC_V\cong (\Attr_{\fC}(F)\to F)_*\bC_{\Attr_{\fC}(F)}[\dim F]\cong \cC_F.
\end{align}
Applying the above discussion to $V=Y^\circ$, and we get a natural isomorphism
\begin{align}\label{isom on open part}
\IC_{Y^\sA}\big|_{Y^{\circ\sA}}\cong \left(\Res^Y_{Y^\sA}\IC_Y\right)\big|_{Y^{\circ\sA}}.
\end{align}

\begin{Proposition}\label{prop sheaf char of stab}
There is an isomorphism
\begin{align*}
    \IC_{Y^\sA}\cong \Res^Y_{Y^\sA}\IC_Y.
\end{align*}
Moreover, $\mathsf S=\sum_{F\in \Fix_\sA(X)}\mathsf S_F\colon \IC_{Y^\sA}\to\Res^Y_{Y^\sA}\IC_Y$ \eqref{attr induce map} is the unique isomorphism that extends the isomorphism \eqref{isom on open part} from $Y^{\circ\sA}$ to $Y^\sA$.
\end{Proposition}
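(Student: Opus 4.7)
The plan is to derive the sheaf-theoretic statement from the smallness results of Section 3 combined with Braden's base change for hyperbolic restriction along the proper equivariant Jordan--H\"older map, and then to deduce uniqueness from the simplicity of $\IC$ sheaves.

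First, by Lemma \ref{lem proper birational} and Proposition \ref{prop smallness} the morphism $\mathsf{JH}\colon X\to Y$ is proper, birational and small, so $\mathsf{JH}_*\,\cC_X\cong \IC_Y$; and by Proposition \ref{prop smallness fixed locus} the restriction $\mathsf{JH}^\sA\colon X^\sA\to Y^\sA$ is proper, birational and small on each connected component, giving $\mathsf{JH}^\sA_*\,\cC_{X^\sA}\cong \IC_{Y^\sA}$. Next, applying Braden's theorem \cite{Br} to the proper $\sA$-equivariant map $\mathsf{JH}$ yields the base change
$$\Res^Y_{Y^\sA}\,\mathsf{JH}_*\,\cC_X \;\cong\; \mathsf{JH}^\sA_*\,\Res^X_{X^\sA}\cC_X.$$
Since $X$ is smooth and the $\sA$-action is self-dual (Lemma \ref{self-dual torus}), the $\sA$-weights on $T_xX$ at any $x\in X^\sA$ come in dual pairs, so $\dim\Attr_\fC(F)=\tfrac12(\dim X+\dim F)$ for every $F\in\Fix_\sA(X)$. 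The formula \eqref{hyp res isom} then gives $\Res^X_{X^\sA}\cC_X\cong \cC_{X^\sA}$, and splicing the above produces the desired isomorphism $\Res^Y_{Y^\sA}\IC_Y\cong \IC_{Y^\sA}$.

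For the moreover part, $\IC_{Y^\sA}$ is a direct sum of simple perverse sheaves indexed by the connected components of $Y^\sA$, so an isomorphism to $\Res^Y_{Y^\sA}\IC_Y$ is determined, component by component, by its restriction to any dense open in that component. By flatness of $\tau|_F$ (Proposition \ref{tau at fixed loci is flat}) combined with Lemma \ref{open part Cartesian}, the open set $Y^{\circ\sA}$ meets every connected component of $Y^\sA$ in a dense open subset. Over $Y^{\circ\sA}$ the map $\mathsf{JH}$ is an isomorphism (Lemma \ref{lem proper birational}), so $\overline{\Attr}_\fC(\Delta_F)$ restricts to the honest attracting set of the diagonal on $F^\circ$; under the identification \eqref{top homology} the induced morphism $\sS_F|_{Y^{\circ\sA}}$ agrees with the canonical isomorphism \eqref{isom on open part}. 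Therefore $\mathsf S=\sum_F \sS_F$ is the unique morphism extending \eqref{isom on open part}, and in particular is an isomorphism.

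The main technical step will be the careful invocation of Braden's base change in the constructible (and later equivariant, critical) setting, together with the identification of the class $[\overline{\Attr}_\fC(\Delta_F)]$ in \eqref{top homology} with the natural map \eqref{isom on open part} after restriction to $Y^{\circ\sA}$; the remaining ingredients---smallness of $\mathsf{JH}$ and $\mathsf{JH}^\sA$, density of $Y^{\circ\sA}$, and the hyperbolic restriction formula for smooth self-dual $\sA$-varieties---are already packaged in earlier results of the paper.
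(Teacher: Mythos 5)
Your reduction is along the right lines—identify $\mathsf{JH}_*\cC_X\cong\IC_Y$, $\mathsf{JH}^\sA_*\cC_{X^\sA}\cong\IC_{Y^\sA}$, compare via hyperbolic restriction, and pin down uniqueness on the dense open $Y^{\circ\sA}$—but there is a genuine gap in the pivotal step where you invoke ``Braden's base change'' for the proper map $\mathsf{JH}$:
$$
\Res^Y_{Y^\sA}\,\mathsf{JH}_*\,\cC_X \;\cong\; \mathsf{JH}^\sA_*\,\Res^X_{X^\sA}\cC_X.
$$
The functor $\Res^X_{X^\sA}=\mathsf a_{X,*}\,j^!$ on the left-hand side simply does not exist for the quasi-projective variety $X$: while $\cA_X=X\times_Y\cA_Y$ is closed (since $\mathsf{JH}$ is proper and $\cA_Y\subset Y$ is closed), the set-theoretic attraction map $\mathsf a_X\colon\cA_X\to X^\sA$, $x\mapsto\lim_{t\to0}\sigma(t)x$, is \emph{not} a morphism of varieties; it is only a morphism on each locally closed stratum $\Attr_\fC(F)$ separately (already for $X=\bP^1\to\pt$ the map $\bP^1\to\{0,\infty\}$ is not algebraic). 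Braden's theorem in \cite{Br} is about purity and Verdier duality for hyperbolic localization on a fixed variety; it does not give commutation with proper pushforward in this generality. The compatibility that you want is only available as a \emph{filtration}: one uses proper base change $i^!\mathsf{JH}_*=\mathsf{JH}_*j^!$, stratifies $\cA_X=\bigcup_F\Attr_\fC(F)$ along the flow order, and shows that $\Res^Y_{Y^\sA}\IC_Y$ has a finite filtration with subquotients $\mathsf{JH}^\sA_*\cC_F\cong\IC_{\mathsf{JH}^\sA(F)}$, where each piece is handled exactly because $\Attr_\fC(F)\to F$ \emph{is} a morphism and \eqref{hyp res isom} applies. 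In particular one only gets $\mathrm{gr}\,\Res^Y_{Y^\sA}\IC_Y\cong\IC_{Y^\sA}$, not a priori a direct isomorphism.

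This gap propagates to the second half of your argument: you need $\Res^Y_{Y^\sA}\IC_Y$ to be a perverse sheaf isomorphic to a direct sum of its simple constituents before the ``determined by its restriction to a dense open, hence $\mathsf S$ is an isomorphism'' reasoning can close. The paper supplies exactly this via a separate input you omit: Braden's \emph{purity} theorem (that is the actual content of \cite{Br} being used), which, together with the filtration, shows that $\Res^Y_{Y^\sA}\IC_Y$ is pure of weight zero and hence semisimple, giving the splitting $\Res^Y_{Y^\sA}\IC_Y\cong\IC_{Y^\sA}$. Only after that is it correct to say the restriction map \eqref{res of hom} to $Y^{\circ\sA}$ is an isomorphism (using that each simple constituent meets $Y^{\circ\sA}$), pin down $\mathsf S$ uniquely, and deduce that it is an isomorphism. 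So the filtration-plus-purity argument is not merely one way to organize the proof—it is the content that your ``base change'' shortcut is silently assuming.
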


\begin{proof}
Let $j\colon \cA_X\hookrightarrow X$ be the closed immersion, then we have $i^!\mathsf{JH}_*=\mathsf{JH}_*\,j^!$ by the proper base change, where the second $\mathsf{JH}$ is from $\cA_X$ to $\cA_Y$. Using the stratification \eqref{strata of A_X}, we define
\begin{align*}
K_{\succeq I}:=\mathsf{JH}_*\,j_{\succeq I*}j_{\succeq I}^!\cC_X,
\end{align*}
where $I\subset \Fix_\sA(X)$ is a collection of fixed components, and
\begin{align*}
j_{\succeq I}\colon \bigcup_{\substack{F'\in \Fix_\sA(X)\\\exists F\in I,F'\succeq F}}\Attr_\fC(F')\hookrightarrow X
\end{align*}
is the closed immersion of a union of attracting subvarieties. Then $K_{\succeq \emptyset}=0$ and $K_{\succeq \Fix_\sA(X)}=\mathsf{JH}_*\,j^!\cC_X$, and there is a natural map $K_{\succeq I}\to K_{\succeq I'}$ for a pair $I\subset I'$. Suppose $I\subset I'$ are two subsets of $\Fix_\sA(X)$ such that $I'\setminus I=\{F\}$, then we have an exact triangle
\begin{align*}
K_{\succeq I}\to K_{\succeq I'}\to \mathsf{JH}_*k_{F*}k_F^!\cC_X\to K_{\succeq I}[1],
\end{align*}
where $k_F\colon \Attr_\fC(F)\hookrightarrow X$ is the immersion of attracting subvariety into $X$. Applying pushforward along attraction map $\mathsf a\colon \cA_Y\to Y^\sA$ to the above triangle, we get an exact triangle
\begin{align*}
\mathsf a_*K_{\succeq I}\to \mathsf a_*K_{\succeq I'}\to \mathsf{JH}^\sA_*\cC_F\to \mathsf a_*K_{\succeq I}[1],
\end{align*}
where we have used $$\mathsf a_*\mathsf{JH}_*k_{F*}k_F^!\cong \mathsf{JH}^\sA_*(\Attr_\fC(F)\to F)_*k_F^!=\mathsf{JH}^\sA_*\Res^X_F,$$ and $\Res^X_F\cC_X\cong \cC_F$ (which is \eqref{hyp res isom} for $V=X$). Note that $\mathsf{JH}^\sA_*\cC_F\cong \IC_{\mathsf{JH}^\sA(F)}$. By induction on $I\subset\Fix_\sA(X)$, we see that $\mathsf a_*K_{\succeq I}\in \Perv(Y^\sA)$ for all $I$, in particular, $\Res^Y_{Y^\sA}\IC_Y\cong \mathsf a_*K_{\succeq \Fix_\sA(X)}$ is a perverse sheaf. Moreover, $\Res^Y_{Y^\sA}\IC_Y$ has a filtration by $\mathsf a_*K_{\succeq I}$ such that the associated graded, denote $\mathrm{gr}\Res^Y_{Y^\sA}\IC_Y$, is isomorphic to $\bigoplus_{F\in \Fix_\sA(X)}\IC_{\mathsf{JH}^\sA(F)}=\IC_{Y^\sA}$. If we start with setting $\IC_Y$ to be of weight zero, then $\Res^Y_{Y^\sA}\IC_Y$ is pure of weight zero \cite{Br}, so $\Res^Y_{Y^\sA}\IC_Y$ is isomorphic to direct sum of its simple constituents, that is, $\Res^Y_{Y^\sA}\IC_Y\cong \IC_{Y^\sA}$.

We notice that the inverse of the isomorphism \eqref{hyp res isom} is given by the fundamental class $[\Attr_\fC(\Delta_F)]$. Since the restriction of $\sS_F$ to $F^\circ$ is induced by the fundamental class $[\Attr_\fC(\Delta_{F^\circ})]$, we have $\sS|_{Y^{\circ\sA}}=$ \eqref{isom on open part}. Since the support of every simple constituent of $\IC_{Y^\sA}=\bigoplus_{F\in \Fix_\sA(X)}\IC_{\mathsf{JH}^\sA(F)}$ has nontrivial intersection with $Y^{\circ\sA}$, the restriction map
\begin{align}\label{res of hom}
\Hom_{\D^b_c(Y^\sA)}(\IC_{Y^\sA}, \Res^Y_{Y^\sA}\IC_Y)\to \Hom_{\D^b_c(Y^{\circ\sA})}\left(\IC_{Y^{\sA}}\big|_{Y^{\circ\sA}}, \left(\Res^Y_{Y^\sA}\IC_Y\right)\big|_{Y^{\circ\sA}}\right)
\end{align}
is an isomorphism. Thus $\mathsf S$ is the unique homomorphism that extends the isomorphism \eqref{isom on open part}. In particular, $\mathsf S$ is an isomorphism.
\end{proof}

\begin{Remark}\label{rmk traspose}
Consider the opposite chamber $-\fC$, and denote the resulting isomorphism 
$$\mathsf S^-\colon \mathsf{JH}^\sA_*\cC_{X^\sA}\cong \mathsf a^-_*i^{-!}\mathsf{JH}_*\cC_X, $$ where $i^-\colon \cA^-_Y=\{y\in Y : \lim_{t\to\infty}\sigma(t)\cdot y\text{ exists}\}\hookrightarrow Y$ is the closed immersion of the $-\fC$ attracting set ($\sigma$ is a generic cocharacter in $\fC$), and $\mathsf a^-\colon \cA^-_Y\to Y^\sA$ is the attraction map. 

Let $(\mathsf S^-)^{\mathrm{t}}$ be the transpose of $\mathsf S^-$, that is, applying Verdier dual to both sides: $$(\mathsf S^-)^{\mathrm{t}}\colon \bD \mathsf a^-_*i^{-!}\mathsf{JH}_*\cC_X\cong \bD\mathsf{JH}^\sA_*\cC_{X^\sA}.$$ By the properness of $\mathsf{JH}$ and $\mathsf{JH}^\sA$, and the self-duality of $\cC_X$ and $\cC_{X^\sA}$, $(\mathsf S^-)^{\mathrm{t}}$ induces a natural isomorphism 
$$\mathsf a^-_!i^{-*}\mathsf{JH}_*\cC_X\cong \mathsf{JH}^\sA_*\cC_{X^\sA}. $$ 
Using the natural isomorphism of functors $\mathsf a^-_!i^{-*}\cong \mathsf a_*i^{!}$ \cite{Br}, we have
\begin{align*}
(\mathsf S^-)^{\mathrm{t}}\colon \Res^Y_{Y^\sA}\IC_Y\cong \IC_{Y^\sA}.
\end{align*}
Moreover, it is easy to see that $(\mathsf S^-)^{\mathrm{t}}\big|_{Y^{\circ\sA}}$ is the inverse of $\mathsf S\big|_{Y^{\circ\sA}}$. Then it follows from the isomorphism \eqref{res of hom} that
\begin{align}\label{transpose}
(\mathsf S^-)^{\mathrm{t}}\circ \mathsf S=\id,\quad \mathsf S\circ (\mathsf S^-)^{\mathrm{t}}=\id.
\end{align}
We refer to \cite[Lem.~3.29]{COZZ1} for a correspondence version of \eqref{transpose} in a more general setting.
\end{Remark}

\subsection{Sheaf theoretic triangle lemma}

Let $\fC'$ be a face of $\fC$ and $\sA'\subset \sA$ be the subtorus whose Lie algebra is spanned by $\fC'$. Let $Y^{\sA'}$ be the $\sA'$-fixed locus of $Y$, then we have hyperbolic restrictions from $Y$ to $Y^{\sA'}$ along the $\fC'$-attraction direction $$\Res^Y_{Y^{\sA'}}=(\Attr_{\fC'}(Y^{\sA'})\to Y^{\sA'})_*(\Attr_{\fC'}(Y^{\sA'})\to Y)^!\colon \D^b_c(Y)\to \D^b_c(Y^{\sA'}),$$
and from $Y^{\sA'}$ to $Y^{\sA}$ along the $\fC/\fC'$-attraction direction $$\Res^{Y^{\sA'}}_{Y^\sA}=(\Attr_{\fC/\fC'}(Y^{\sA})\to Y^{\sA})_*(\Attr_{\fC/\fC'}(Y^{\sA})\to Y^{\sA'})^!\colon \D^b_c(Y^{\sA'})\to \D^b_c(Y^{\sA}).$$
We notice that there is a Cartesian diagram\footnote{To prove it, we notice that such Cartesian diagram exists if $Y$ is a linear representation of $\sA$. In general, we can embed $Y$ into a linear representation $V$ of $\sA$ and restricts the diagram for $V$ to $Y$.}:
\begin{equation}\label{Cartesian square of attr}
\xymatrix{
\Attr_{\fC}(Y^\sA) \ar@{^{(}->}[r] \ar[d] & \Attr_{\fC'}(Y^{\sA'}) \ar[d]\\
\Attr_{\fC/\fC'}(Y^\sA) \ar@{^{(}->}[r] & Y^{\sA'}\,,
}
\end{equation}
where horizontal arrows are closed immersions and vertical arrows are attractions along the $\fC'$-direction. Applying base change to \eqref{Cartesian square of attr}, we have
\begin{align*}
&(\Attr_{\fC/\fC'}(Y^{\sA})\to Y^{\sA})_*(\Attr_{\fC/\fC'}(Y^{\sA})\to Y^{\sA'})^!(\Attr_{\fC'}(Y^{\sA'})\to Y^{\sA'})_*(\Attr_{\fC'}(Y^{\sA'})\to Y)^!\\
\cong&(\Attr_{\fC/\fC'}(Y^{\sA})\to Y^{\sA})_*(\Attr_{\fC}(Y^{\sA})\to \Attr_{\fC/\fC'}(Y^{\sA}))_*(\Attr_{\fC}(Y^{\sA})\to \Attr_{\fC'}(Y^{\sA'}))^!(\Attr_{\fC'}(Y^{\sA'})\to Y)^!\\
\cong&(\Attr_{\fC}(Y^{\sA})\to Y^{\sA})_*(\Attr_{\fC}(Y^{\sA})\to Y)^!,
\end{align*}
and we obtain the following.
\begin{Proposition}\label{prop hyp res associative}
We have a natural isomorphism of functors
\begin{align}\label{hyp res associative}
    \Res^Y_{Y^\sA}\cong \Res^{Y^{\sA'}}_{Y^\sA}\Res^Y_{Y^{\sA'}}.
\end{align}
\end{Proposition}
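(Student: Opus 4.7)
The plan is to deduce the associativity by a single application of proper base change to the square \eqref{Cartesian square of attr}, so the substantive work is in establishing that this square is Cartesian as schemes, not merely set-theoretically. First, I would record the identifications
\[
\Res^Y_{Y^{\sA'}} = b_*\, c^!, \qquad \Res^{Y^{\sA'}}_{Y^\sA} = b'_*\, c'^!,
\]
where $c\colon \Attr_{\fC'}(Y^{\sA'})\hookrightarrow Y$ and $c'\colon \Attr_{\fC/\fC'}(Y^\sA)\hookrightarrow Y^{\sA'}$ are the closed immersions (top and bottom of \eqref{Cartesian square of attr} with the inclusions), while $b\colon \Attr_{\fC'}(Y^{\sA'})\to Y^{\sA'}$ and $b'\colon \Attr_{\fC/\fC'}(Y^\sA)\to Y^\sA$ are the corresponding BB attraction maps. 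The goal is then $b'_*\,c'^!\,b_*\,c^!\cong \mathsf{a}_*\,i^!$ where $\mathsf a$ and $i$ are the attraction and inclusion used in \eqref{hy loc fun}.

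Next, I would verify that \eqref{Cartesian square of attr} is Cartesian. As suggested in the footnote, the cleanest way is to choose a closed $\sA$-equivariant embedding $Y\hookrightarrow V$ into a linear $\sA$-representation (which exists because $Y$ is affine). For the linear $V$, the square is Cartesian directly from the weight decomposition: writing $V=\bigoplus_\chi V_\chi$, the attracting subspaces are sums over the appropriate half-spaces of characters, and the identity $\Attr_\fC(V^\sA)=\Attr_{\fC'}(V^{\sA'})\cap\Attr_{\fC/\fC'}(V^\sA)$ is tautological at the level of characters. Base changing back to $Y$ along the closed embedding preserves the fibred-product property because attracting sets of a closed $\sA$-invariant subvariety are the scheme-theoretic intersections with the attracting sets of the ambient space (a BB affine fibration pulls back along a closed immersion to the attraction map of the subvariety).

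Once \eqref{Cartesian square of attr} is Cartesian, proper base change gives a canonical isomorphism $c'^!\,b_* \cong \tilde b_*\, \tilde c^!$, where $\tilde b$ and $\tilde c$ are the left and top arrows of the square; here one uses that $b$ is proper on attracting sets (indeed $b$ is an affine-bundle projection restricted to a closed subvariety of $Y$) so the base-change map is an isomorphism in both its $*$-pushforward and $!$-pullback forms. Assembling everything,
\[
\Res^{Y^{\sA'}}_{Y^\sA}\Res^Y_{Y^{\sA'}} = b'_*\,c'^!\,b_*\,c^! \cong b'_*\,\tilde b_*\,\tilde c^!\,c^!=(b'\circ\tilde b)_*\,(c\circ\tilde c)^! = \mathsf a_*\,i^! = \Res^Y_{Y^\sA},
\]
since $c\circ\tilde c\colon \Attr_\fC(Y^\sA)\hookrightarrow Y$ agrees with $i$ and $b'\circ\tilde b\colon \Attr_\fC(Y^\sA)\to Y^\sA$ agrees with $\mathsf a$, as one sees by taking a generic cocharacter in $\fC$ and observing that the two-step attraction (first to $Y^{\sA'}$, then to $Y^\sA$) coincides with the one-step $\fC$-attraction.

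The main obstacle is the scheme-theoretic Cartesian property of \eqref{Cartesian square of attr}: set-theoretically it is immediate from the transitivity of limits, but for base change one needs equality of schemes, which is why the reduction to a linear $\sA$-representation (where the weight decomposition makes the assertion automatic) followed by restriction along a closed equivariant embedding is the right route. Everything else is formal manipulation of the six-functor formalism.
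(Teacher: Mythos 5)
Your proof follows essentially the same route as the paper: exhibit the square \eqref{Cartesian square of attr}, verify it is Cartesian by reducing to a linear $\sA$-representation via a closed equivariant embedding, and then apply base change and compose. Two small remarks.

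First, your claim that $b$ is proper is incorrect and, more importantly, unnecessary. The attraction map $b\colon \Attr_{\fC'}(Y^{\sA'})\to Y^{\sA'}$ is an affine morphism with (generically) positive-dimensional fibers, so it is not proper. However, this does not affect the argument: the base change isomorphism
$$c'^!\,b_* \;\cong\; \tilde b_*\,\tilde c^!$$
holds for \emph{any} Cartesian square of locally compact spaces in the bounded constructible derived category. It is the Verdier dual of the proper base change theorem $g^*f_!\cong f'_!g'^*$, which is valid unconditionally (the word \lq\lq proper\rq\rq\ refers to the use of $f_!$, not to any hypothesis on the morphisms). Conjugating by Verdier duality and using $\bD f_! \bD = f_*$, $\bD g^*\bD=g^!$ gives the stated isomorphism with no properness assumption. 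The paper uses this silently; you should not try to establish properness of $b$.

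Second, your insistence on the scheme-theoretic rather than set-theoretic Cartesian property is more caution than the situation requires. For base change in $\D^b_c$, one only needs the square to be Cartesian as topological spaces, and for that the set-theoretic identity $\Attr_\fC(Y^\sA)=\Attr_{\fC'}(Y^{\sA'})\cap\Attr_{\fC/\fC'}(Y^\sA)$ together with the induced subspace topologies suffices; the paper's footnote implicitly works at this level. Otherwise your reduction to a linear representation and the observation that attracting sets of an $\sA$-invariant closed subvariety are obtained by intersecting with attracting sets of the ambient representation is exactly the paper's intended argument.
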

Denote the isomorphism in Proposition \ref{prop sheaf char of stab} for $(\sA,\fC)$ by 
$$\mathsf S_{\fC}\colon \IC_{Y^\sA}\cong\Res^Y_{Y^\sA}\IC_Y. $$ 
Replacing $(\sA,\fC)$ with $(\sA',\fC')$, and with $(\sA/\sA',\fC/\fC')$ respectively, we get an isomorphism 
$$\mathsf S_{\fC'}\colon \IC_{Y^{\sA'}}\cong\Res^Y_{Y^{\sA'}}\IC_Y, \quad \mathsf S_{\fC/\fC'}\colon \IC_{Y^{\sA}}\cong\Res{Y^{\sA'}_{Y^\sA}}\IC_{Y^{\sA'}}. $$

\begin{Proposition}\label{prop sheaf tri lem}
We have 
\begin{align}\label{sheaf tri lem}
\Res^{Y^{\sA'}}_{Y^\sA}(\sS_{\fC'})\circ \sS_{\fC/\fC'}=\sS_{\fC}\colon \; \IC_{Y^\sA}\to \Res^{Y^{\sA'}}_{Y^\sA}\Res^Y_{Y^{\sA'}}\IC_Y\underset{\text{\scriptsize\eqref{hyp res associative}}}{\cong}\Res^Y_{Y^\sA}\IC_Y.
\end{align}
\end{Proposition}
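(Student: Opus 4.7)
The strategy is to invoke the uniqueness principle from Proposition \ref{prop sheaf char of stab}: applied to $(\sA,\fC)$, it says that a morphism $\IC_{Y^\sA}\to \Res^Y_{Y^\sA}\IC_Y$ is determined by its restriction to the open dense subvariety $Y^{\circ\sA}\subset Y^\sA$, because \eqref{res of hom} is an isomorphism. Both sides of \eqref{sheaf tri lem} live in $\Hom_{\D^b_c(Y^\sA)}(\IC_{Y^\sA},\Res^Y_{Y^\sA}\IC_Y)$ (after identifying the intermediate object via Proposition \ref{prop sheaf char of stab} and then using \eqref{hyp res associative}), so it suffices to verify the identity after restriction to $Y^{\circ\sA}$.

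On the open part, $\sS_\fC|_{Y^{\circ\sA}}$ is by construction the tautological identification \eqref{isom on open part}. To unpack the composition $\Res^{Y^{\sA'}}_{Y^\sA}(\sS_{\fC'})\circ \sS_{\fC/\fC'}$, I would replace $Y$ by the $\sA$-invariant smooth open subvariety $Y^\circ$, which has a self-dual $\sA$-action. The formula \eqref{hyp res isom} then gives canonical identifications
\begin{equation*}
\Res^{Y^\circ}_{Y^{\circ\sA'}}\cC_{Y^\circ}\cong \cC_{Y^{\circ\sA'}},\qquad \Res^{Y^{\circ\sA'}}_{Y^{\circ\sA}}\cC_{Y^{\circ\sA'}}\cong \cC_{Y^{\circ\sA}},\qquad \Res^{Y^\circ}_{Y^{\circ\sA}}\cC_{Y^\circ}\cong \cC_{Y^{\circ\sA}},
\end{equation*}
each implemented by the fundamental class of the corresponding attracting subvariety. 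Both $\sS_{\fC'}|_{Y^{\circ\sA'}}$ and $\sS_{\fC/\fC'}|_{Y^{\circ\sA}}$ unfold to the first two identifications.

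The geometric heart is then to verify that the composition of the first two identifications equals the third, under the associativity isomorphism \eqref{hyp res associative}. This is exactly the content of the Cartesian square \eqref{Cartesian square of attr} applied to $Y^\circ$: the $\fC$-attracting set of $Y^{\circ\sA}$ in $Y^\circ$ equals the fibered product over $Y^{\circ\sA'}$ of the $\fC'$-attracting set of $Y^{\circ\sA'}$ in $Y^\circ$ and the $\fC/\fC'$-attracting set of $Y^{\circ\sA}$ in $Y^{\circ\sA'}$. Chasing the base-change isomorphism through the definitions of the two hyperbolic restrictions -- exactly as in the proof of Proposition \ref{prop hyp res associative} -- converts the composition of two fundamental-class maps into the single fundamental class that realizes $\sS_\fC|_{Y^{\circ\sA}}$.

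The main technical point will be to check that this chain of base-change isomorphisms produces no unintended scalar, sign, or degree twist. This should reduce to bookkeeping: each arrow in \eqref{hyp res isom} is canonically normalized by a fundamental class of the correct half-dimension, the dimensions add additively via $(\dim Y^{\circ\sA'}-\dim Y^{\circ})+(\dim Y^{\circ\sA}-\dim Y^{\circ\sA'})=\dim Y^{\circ\sA}-\dim Y^{\circ}$, and proper base change in \eqref{Cartesian square of attr} is a canonical isomorphism of functors. Once this matching is laid out, both sides of \eqref{sheaf tri lem} restrict to the same tautological map on $Y^{\circ\sA}$, and the uniqueness of the extension completes the proof.
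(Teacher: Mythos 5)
Your proposal is correct and follows essentially the same route as the paper: reduce to the dense open locus $Y^{\circ\sA}$ via the isomorphism \eqref{res of hom}, identify each restricted $\sS$ with the tautological map \eqref{hyp res isom}, and use the Cartesian square \eqref{Cartesian square of attr} (over $Y^\circ$) to check that the composed hyperbolic-restriction identifications agree with the single one. The paper's proof expresses the same verification that the composition of \eqref{hyp res isom} maps matches \eqref{hyp res isom} directly, and then cites \eqref{res of hom}; the bookkeeping about degrees and base change that you flag is indeed where the content sits, and the paper simply asserts it "follows from the construction."
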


\begin{proof}
As we have discussed above, $\sS_{\fC/\fC'}\big|_{Y^{\circ\sA}}$ agrees with the natural isomorphism \eqref{hyp res isom} applied to $V=Y^{\circ\sA'}$ with torus $\sA/\sA'$ action and the chamber $\fC/\fC'$; $\sS_{\fC'}\big|_{Y^{\circ\sA'}}$ agrees with the natural isomorphism \eqref{hyp res isom} applied to $V=Y^{\circ}$ with torus $\sA'$ action and the chamber $\fC'$. It follows from the construction that
\begin{align*}
\text{the composition }\;\cC_{Y^{\circ\sA}}\underset{\text{\scriptsize\eqref{hyp res isom}}}{\cong}\Res^{Y^{\circ\sA'}}_{Y^{\circ\sA}}\cC_{Y^{\circ\sA'}}\underset{\text{\scriptsize\eqref{hyp res isom}}}{\cong}\Res^{Y^{\circ\sA'}}_{Y^{\circ\sA}}\Res^{Y^{\circ}}_{Y^{\circ\sA'}}\cC_Y\underset{\text{\scriptsize\eqref{hyp res associative}}}{\cong} \Res^{Y^{\circ}}_{Y^{\circ\sA}}\cC_Y\;\text{ equals to }\; \cC_{Y^{\circ\sA}}\underset{\text{\scriptsize\eqref{hyp res isom}}}{\cong}\Res^{Y^{\circ}}_{Y^{\circ\sA}}\cC_Y,
\end{align*}
so we have
$\Res^{Y^{\circ\sA'}}_{Y^{\circ\sA}}(\sS_{\fC'}\big|_{Y^{\circ\sA'}})\circ \sS_{\fC/\fC'}\big|_{Y^{\circ\sA}}=\sS_{\fC}\big|_{Y^{\circ\sA}}$. Then the proposition follows from the fact that the restriction map \eqref{res of hom} of the Hom space to $Y^{\circ\sA}$ is an isomorphism.
\end{proof}

Applying the vanishing cycle functor $\varphi_\sw$ to \eqref{sheaf tri lem} followed by taking $\sT$-equivariant hypercohomology, we get the following triangle lemma (cf.\,{\cite[Thm.~4.16]{COZZ1}}).

\begin{Theorem}\label{thm tri lem}
The following diagram 
\begin{equation*}
\xymatrix{
H^\sT(X^\sA,\sw) \ar[rr]^{\Stab_{\fC}} \ar[dr]_{\Stab_{\fC/\fC'}} & & H^\sT(X,\sw), \\
 & H^\sT(X^{\sA'},\sw) \ar[ur]_-{\Stab_{\fC'}} &
}
\end{equation*}
is commutative.
\end{Theorem}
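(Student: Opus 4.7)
My plan is to deduce the triangle lemma from the sheaf-theoretic identity already established in Proposition \ref{prop sheaf tri lem}, by applying the vanishing cycle functor $\varphi_\sw$ and then passing to $\sT$-equivariant hypercohomology.

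First, I would recall from \S \ref{sect on conn to stab} that for each choice of torus and chamber, the critical stable envelope factors as follows: starting from the morphism $\sS_\fC\colon \mathsf{JH}^\sA_*\cC_{X^\sA}\to \mathsf{a}_* i^!\,\mathsf{JH}_*\cC_X$, one applies $\varphi_\sw$ (using that vanishing cycles commute with proper pushforward and with hyperbolic restriction) to obtain $\varphi_\sw\sS_\fC\colon \mathsf{JH}^\sA_*\varphi_\sw\cC_{X^\sA}\to \mathsf{a}_* i^!\,\mathsf{JH}_*\varphi_\sw\cC_X$, then takes $\sT$-equivariant hypercohomology, and post-composes with the counit $i_!i^!\to\id$ associated to the closed immersion $i\colon \cA_Y\hookrightarrow Y$ (using $i_*=i_!$). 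The same recipe produces $\Stab_{\fC'}$ and $\Stab_{\fC/\fC'}$ from the sheaf morphisms $\sS_{\fC'}$ and $\sS_{\fC/\fC'}$.

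Second, I would apply $\varphi_\sw$ to the identity of Proposition \ref{prop sheaf tri lem},
\[
\Res^{Y^{\sA'}}_{Y^\sA}(\sS_{\fC'})\circ \sS_{\fC/\fC'}=\sS_\fC,
\]
and take $\sT$-equivariant hypercohomology. Functoriality of hypercohomology, together with the compatibility of $\varphi_\sw$ with $\mathsf{JH}_*$ and with $\Res$, should identify the right-hand side with $\Stab_\fC$ and the left-hand side with the composite $\Stab_{\fC'}\circ \Stab_{\fC/\fC'}$, yielding the commutativity of the triangle.

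The main obstacle will be the last identification: verifying that the two-step procedure ``take hypercohomology and descend by the counit $i_!i^!\to\id$'' is compatible with the associativity isomorphism $\Res^Y_{Y^\sA}\cong \Res^{Y^{\sA'}}_{Y^\sA}\Res^Y_{Y^{\sA'}}$ of Proposition \ref{prop hyp res associative}. Concretely, I must check that the counit attached to $\cA^\fC_Y\hookrightarrow Y$ agrees with the two-step composite of counits attached to $\cA^{\fC'}_Y\hookrightarrow Y$ and $\cA^{\fC/\fC'}_{Y^{\sA'}}\hookrightarrow Y^{\sA'}$. This follows from the Cartesian square \eqref{Cartesian square of attr} together with proper base change, in the spirit of Braden's original arguments \cite{Br}. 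Once this functoriality is in hand, the theorem is immediate from the sheaf-level identity.
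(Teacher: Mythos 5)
Your proposal matches the paper's proof: apply $\varphi_\sw$ to the sheaf-level identity of Proposition \ref{prop sheaf tri lem} and take $\sT$-equivariant hypercohomology. The paper states this in one line and implicitly uses the compatibility you flag — that the counit for $\cA_Y^{\fC}\hookrightarrow Y$ agrees with the composite of the counits for $\cA_Y^{\fC'}\hookrightarrow Y$ and $\cA_{Y^{\sA'}}^{\fC/\fC'}\hookrightarrow Y^{\sA'}$ under the associativity isomorphism of Proposition \ref{prop hyp res associative} — and you correctly identify that this follows from the Cartesian square \eqref{Cartesian square of attr} and base change, so your write-up is a faithful, slightly more careful version of the paper's argument.
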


\end{document}